\newcommand{\ubar}[1]{\underaccent{\bar}{#1}}
\colorlet{darkgreen}{green!50!black}
\DeclareMathOperator{\pe}{Pe}
\newcommand{\cB}{\mathcal{B}}
\newcommand{\cF}{\mathcal{F}}
\newcommand{\cH}{\mathcal{B}}
\newcommand{\curE}{\mathscr E}
\newcommand{\U}{\mathscr U}
\newcommand\dist{\mathrm{dist}}
\definecolor{applegreen}{rgb}{0.55, 0.71, 0.0}
\definecolor{applered}{RGB}{225, 160, 160}
\definecolor{bluebell}{RGB}{160, 160, 228}
\begin{document}
\title{Bounds on the heat transfer rate via passive advection}

\author[Iyer]{Gautam Iyer}
\address{%
  Department of Mathematical Sciences,
  Carnegie Mellon University,
  Pittsburgh, PA 15213.}
\email{gautam@math.cmu.edu}

\author[Van]{Truong-Son Van}
\address{%
  Department of Mathematics,
  University of Pennsylvania,
  Philadelphia, PA 19104.}
\email{tsvan@sas.upenn.edu}

\begin{abstract} 
    In heat exchangers, an incompressible fluid is heated initially and cooled at the boundary. The goal is to transfer the heat to the boundary as efficiently as possible. In this paper we study a related steady version of this problem where a steadily stirred fluid is uniformly heated in the interior and cooled on the boundary. For a given large P\'eclet number, how should one stir to minimize some norm of the temperature? This version of the problem was previously studied by Marcotte, Doering et\ al.\ (SIAM Appl.\ Math '18) in a disk, where the authors used matched asymptotics to show that when the P\'eclet number, $\pe$, is sufficiently large one can stir the fluid in a manner that ensures the total heat is $O(1/\pe)$. In this paper we confirm their results with rigorous proofs, and also provide an almost matching lower bound. For simplicity, we work on the infinite strip instead of the unit disk and the proof uses probabilistic techniques. 
\end{abstract}
\thanks{%
  This work has been partially supported by
  the National Science Foundation under grants
  DMS-1814147, %
   1812609,
  and the Center for Nonlinear Analysis.
}
\dedicatory{Dedicated to Charlie Doering, whose life and work was an inspiration to us.}

\maketitle

\section{Introduction}\label{s:intro}

A heat exchanger is a system used to transfer heat between a fluid and a heat source or sink, for either heating or cooling.
These are used for both heating and cooling processes and have a broad range applications including combustion engines, sewage treatment, nuclear power plants and cooling CPUs in personal computers~\cite{WeisbergBauEA92,QuMudawar02,VallaghePatera14,SheikholeslamiHaqEA19,AlamKim18,MarcotteDoeringEA18,WangWangEA18,DoeringTobasco19,LienhardLienhard20}.

The temperature of the fluid in the heat exchanger evolves according to the advection diffusion equation
\begin{equation}\label{e:theta}
  \partial_t \theta + v \cdot \grad \theta - \kappa \lap \theta = 0 \qquad \text{in } \Omega \,,
\end{equation}
where $\Omega \subseteq \R^d$ is the region occupied by the fluid.
Here $\theta$ is the temperature of the fluid, $\kappa$ is the thermal diffusivity and $v = v(x, t)$ is velocity field of the fluid.
Throughout this paper we will assume the fluid is incompressible and doesn't flow through the container walls.
That is, we require
\begin{equation}\label{e:incomp}
\dv v = 0 \quad\text{in } \Omega\,,
\qquad\text{and}\qquad
v \cdot \hat n = 0 \quad\text{on } \partial \Omega \,.
\end{equation}
Some portion of the boundary of $\Omega$ may be insulated, and some portion may be connected to a heat source/sink maintained at a constant temperature.
Denoting these pieces by $\partial_N \Omega$ and $\partial_D \Omega$ respectively, and normalizing so that the temperature of the heat source/sink is $0$, we study~\eqref{e:theta} with mixed Dirichlet/Neumann boundary conditions
\begin{equation*}
  \partial_{\hat n} \theta = 0 \quad \text{on } \partial_N \Omega\,,
  \qquad\text{and}\qquad 
  \theta = 0 \quad\text{on } \partial_D \Omega\,.
\end{equation*}

A problem of practical interest is to minimize some norm of the temperature under a constraint on the stirring velocity field.
Note, here we assume~\eqref{e:theta} is a passive scalar equation -- the velocity field~$v$ is prescribed and is not coupled to the temperature profile.
The active scalar case entails coupling $v$ to $\theta$ via the Boussinesq system and leads to Rayleigh--B\'enard convection which has been extensively studied~\cite{Rayleigh16,SolomonGollub88,Kadanoff01,DoeringOttoEA06}.

In order to simplify matters, we set $\kappa = \frac{1}{2}$, assume $v$ is time independent, and assume the initial temperature~$\theta_0$ is identically~$1$.
In this case we note that
\begin{equation*}
  T \defeq \int_0^\infty \theta(x, t) \, dt
\end{equation*}
satisfies the Poisson problem
\begin{equation}\label{e:T}
  -\frac{1}{2} \lap T + v \cdot \grad T = 1\,,
\end{equation}
in  $\Omega$, with boundary conditions 
\begin{equation}\label{e:TBC}
  T = 0 \quad\text{on } \partial_D \Omega\,,
  \qquad\text{and}\qquad
  \partial_{\hat n} T = 0 \quad\text{on } \partial_N \Omega\,.
\end{equation}
Now a simplified optimization problem of interest is minimize a norm of $T$ under a constraint on the advecting velocity field~$v$.

In the recent paper~\cite{MarcotteDoeringEA18}, the authors studied this minimization problem when~$\Omega \subseteq \R^2$ is a disk of radius~$1$, and $\partial_N \Omega = \emptyset$.
Given $p \in [1,\infty)$ and $\U > 0$, let $\mathcal V^{k,p}_{\U}$ be the set of all $W^{k,p}$ velocity fields satisfying~\eqref{e:incomp} such that
\begin{equation}\label{e:vConstraint}
  \norm{v}_{W^{k,p}(\Omega)} \leq \U\,,
\end{equation}
and define
\begin{equation*}
  \mathcal E^{k,p}_{q}(\U) \defeq
  \inf_{v \in \mathcal V^{k,p}_{\U}} \norm{T^v}_{L^q} \,.
\end{equation*}
Here $T^v$ is simply the solution to~\eqref{e:T}--\eqref{e:TBC}, and we introduced the superscript~$v$ to emphasize the dependence of~$T$ on~$v$.

Physically when~$k=0$ and~$p = 2$, the constraint~\eqref{e:vConstraint} limits the kinetic energy of the ambient fluid.
If the domain~$\Omega$ has an associated length scale of order~$1$, the quantity $\U$ is the \emph{P\'eclet number} --- a non-dimensional ratio measuring the relative strength of the advection to the diffusion.
When the P\'eclet number is sufficiently large, the authors of~\cite{MarcotteDoeringEA18} use matched asymptotics to show
\begin{equation}\label{e:MCDTY}
  \mathcal E^{0,2}_{1}(\U) \leq O\paren[\Big]{ \frac{1}{\U}}\,,
\end{equation}
and support their results with numerics.

In this paper we revisit this problem and aim to provide mathematically rigorous proofs of the bounds in~\cite{MarcotteDoeringEA18}.
Making matched asymptotics rigorous arises in many situations and has been extensively studied (see for instance~\cite{BensoussanLionsEA78,Kushner84,Nguetseng89,Evans90,Allaire92,PavliotisStuart08}).
In this situation, however, the flow considered in~\cite{MarcotteDoeringEA18} leads to a degenerate homogenization problem, for which one can not use standard techniques.
Instead we reformulate the problem probabilistically and use asymmetric large deviations estimates to  handle the degenerate diffusivity.

\begin{figure}[htb]
  \centering%
  \begin{minipage}[t]{.5\linewidth}\centering%
    \includegraphics[width=.75\linewidth]{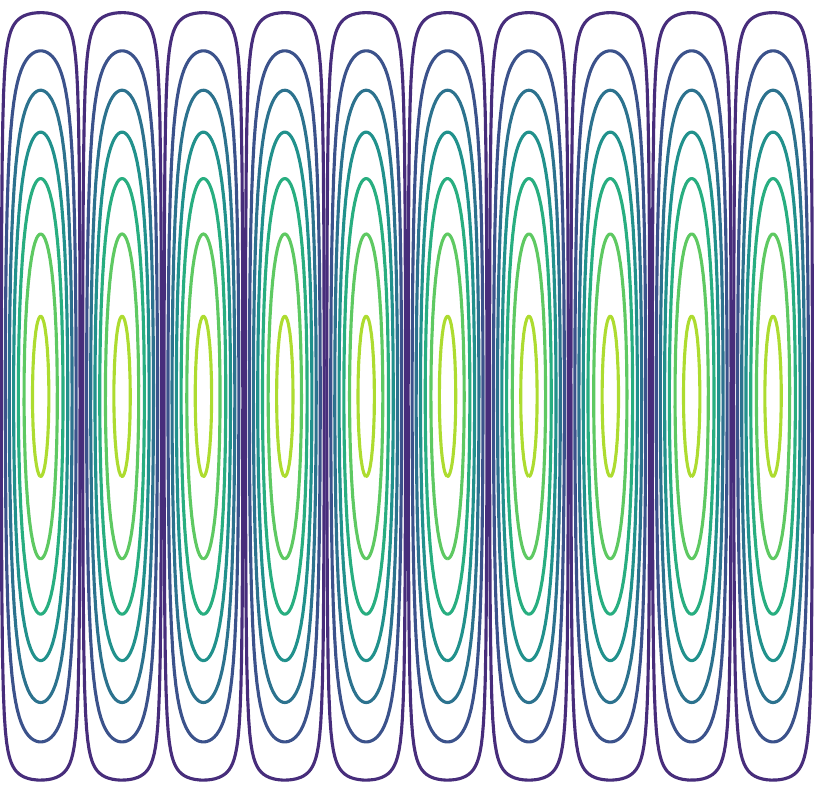}
    \caption{Tall and thin convection rolls}
    \label{f:convection}
  \end{minipage}%
  \begin{minipage}[t]{.5\linewidth}\centering%
    \includegraphics[width=.75\linewidth]{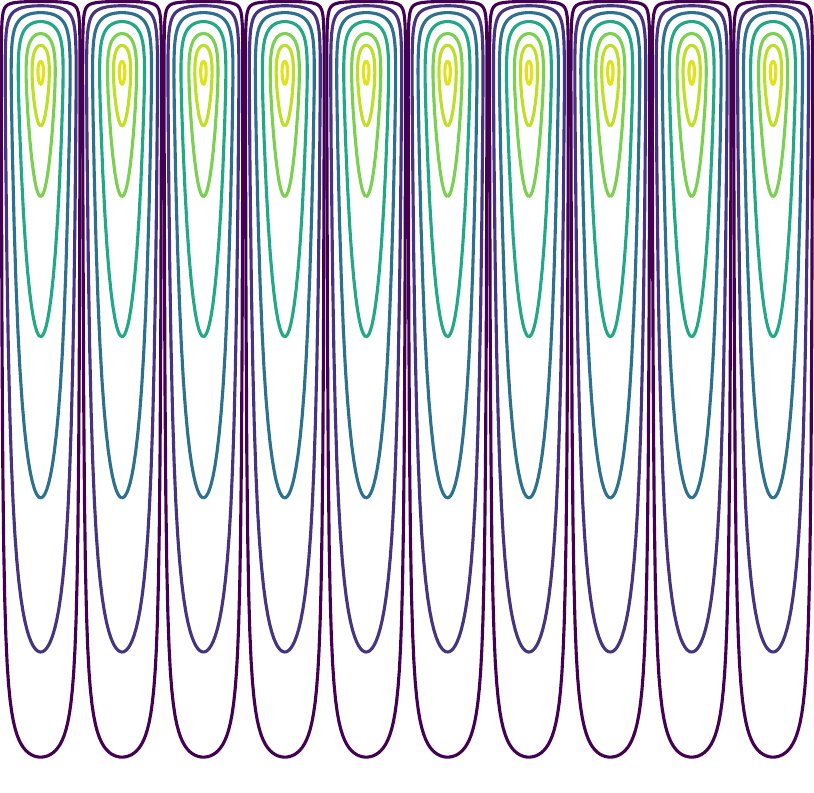}
    \caption{Skewed tall and thin convection rolls.}
    \label{f:skew}
  \end{minipage}%
\end{figure}
To simplify the proofs, we study the problem in a horizontal strip instead of the disk.
For boundary conditions we cool the top of the strip, insulate the bottom, and impose $2$-periodic boundary conditions in the horizontal direction.
To prove the upper bound~$\mathcal E^{0,p}_q(\U)$ we only need to find a velocity field~$v \in \mathcal V^{0,p}_\U$ for which $\norm{T^v}_{L^q} \leq O(1 /\U)$.
A natural first guess would be to choose a velocity field that forms many tall and thin convection rolls, with height $O(1)$, and width / amplitude that depend on the P\'eclet number.
This, however, turns out to be suboptimal, and yields a bound that is worse than~\eqref{e:MCDTY}.
To obtain the bound~\eqref{e:MCDTY} one needs to consider tall and thin convection rolls whose center is very close to the top of the strip.
This is the analogue of the velocity fields used in~\cite{MarcotteDoeringEA18}, and is shown in~\ref{f:skew}.

To formulate our result precisely, let $S = \R \times (0, 1) \subseteq \R^2$ be an infinite horizontal strip and $\partial_D S = \R \times \set{1}$ be the top boundary (where we impose homogeneous Dirichlet boundary conditions), and $\partial_N S = \R \times \set{0}$ the bottom boundary (where we impose homogeneous Neumann boundary conditions).
We will impose $2$-periodic boundary conditions in the horizontal direction and identify the function spaces $H^1(S)$ and  $L^2(S)$ with $H^1(\Omega)$ and $L^2(\Omega)$, respectively, where $\Omega \defeq (0, 2) \times (0, 1)$.

\begin{theorem}\label{t:energy}
  there exists a constant $C$ such that for $q\in [1,\infty]$,
  \begin{equation}\label{e:E1lowerBdLinf}
      \mathcal E^{0,\infty}_{q}(\U) 
      \geq  \frac{1}{C \U}\,.
  \end{equation}
  Furthermore, for every $\mu>0$, $p, q \in [1, \infty]$, we have
  \begin{equation}\label{e:E1upperBdLinf}
    \begin{dcases}
      \mathcal E^{0,p}_{q}(\U) 
    \leq \frac{C \ln \U}{\U} & p \in [1,2)\,, \\
      \mathcal E^{0,p}_{q}(\U) 
    \leq \frac{C_\mu \ln \U}{\U^{\frac{2 p}{3p - 2} - \mu}} & p \in [2,\infty] \,,
    \end{dcases}
  \end{equation}
  whenever the P\'eclet number, $\U$, is sufficiently large.
\end{theorem}

For $p, q < \infty$, upper bound in~\eqref{e:E1upperBdLinf} is suboptimal.
Indeed, forthcoming work of Doering and Tobasco uses methods in~\cite{DoeringTobasco19} to show that
\begin{equation}\label{e:E1upperBd2}
  \mathcal E^{0, p}_q(\U) \leq \frac{C}{\U}
  \qquad\text{for every } p, q \in [1, \infty)\,,
\end{equation}
and some constant $C = C(p, q)$ and all sufficiently large $\U$.
This is an improvement of~\eqref{e:E1upperBdLinf} by a logarithmic factor for $p \in [1, 2)$, and an arbitrarily small algebraic power for $p = 2$, and by a fixed algebraic power for $p \in (2, \infty)$.
For~$q = \infty$, however, the methods in~\cite{DoeringTobasco19} do not work.
In this case we believe that the logarithmic factor in~\eqref{e:E1upperBdLinf} is necessary due to the presence of hyperbolic critical points, but we are presently unable to prove this.

We do not presently know how to prove any lower bound for $\mathcal E^{0,p}_q(\U)$ when $p < \infty$.
For~$p = \infty$, however, we can use the Eikonal equation to obtain the lower bounded stated in~\eqref{e:E1upperBdLinf} in general domains.
We state this result next.
\begin{proposition}\label{p:TlowerGen}
  Let $d \geq 2$, and $\Omega \subseteq \R^d$ be a bounded domain with smooth boundary~$\partial \Omega$.
  Decompose the boundary as $\partial \Omega = \partial_D \Omega \cup \partial_N \Omega$, with $\partial_D \Omega \neq \emptyset$.
  Then
  \begin{equation}\label{e:E0infqLower}
    \mathcal E^{0, \infty}_q(\U) \geq \frac{1}{C \U}
    \qquad
    \text{for every } q \in [1, \infty]\,,
  \end{equation}
  for some constant $C= C(\Omega)$, and all sufficiently large~$\U$.
\end{proposition}
\begin{remark*}
  As we will see in the proof (specifically from inequality~\eqref{e:TvLower}, below), the constant $C=C(q, \Omega)$ can be computed in terms of the $L^q$ norm of the solution to the Eikonal equation in~$\Omega$.
\end{remark*}

Next we study the behavior of $\mathcal E^{1,p}_q(\mathscr E)$ when $\mathscr E$ is large.
Physically this corresponds to minimizing the $L^q$ norm of the steady state temperature~$T$ under an \emph{enstrophy} constraint on the stirring velocity field.
In this case it turns out that using standard convection rolls (as shown in Figure~\ref{f:convection}) yields a better upper bound on $\mathcal E^{1,p}_q(\mathscr E)$ than the skewed tall and thin rolls (as shown in Figure~\ref{f:skew}).
We note, however, that we have no matching lower bound and the skewed tall and thin convection rolls may not provide the optimal upper bound.
Indeed, the branched flows introduced recently by Doering and Tobasco~\cite{DoeringTobasco19} may provide the optimal bound in the enstrophy constrained case.
Unfortunately, due to their complicated geometry, they can not be analyzed by the techniques we use.
The best bound we can obtain is as follows.
\begin{proposition}\label{p:enstrophy}
  For every $p, q \in [1, \infty]$, there exists a finite constant $C = C(q)$ such that 
  \begin{equation}\label{e:E1upperBd}
      \mathcal E^{1,p}_{q}(\mathscr E)
      \leq \frac{C \abs{\ln \mathscr E}^{13}}{ \mathscr E^{2/5}  }  
  \end{equation}
  whenever $\mathscr E$ is sufficiently large.
  One velocity field that attains this upper bound uses convection rolls  with height $1$, width $\curE^{-1/5}$ and amplitude $\curE^{3/5}$ (see~Figure~\ref{f:convection}).
\end{proposition}

Even though there may be ``non-convection roll'' like flows that could improve the upper bound~\eqref{e:E1upperBd}, heuristics show that the bound~\eqref{e:E1upperBd} is the best one can achieve amongst the class of all ``convection roll'' like flows.
Moreover, for the tall and thin convection rolls used in proof of Proposition~\ref{p:enstrophy} one has matching upper and lower bounds on $\norm{T^v}_{L^\infty}$, up to a logarithmic factor.
Since such convection rolls arise in the study of magma flow in the Earth's mantle and various other contexts~\cite{TabataSuzuki02,KorenagaJordan03,GlasslHiltEA11,YangVerziccoEA15,OstillaMonico17}, the techniques used in the proof of Proposition~\ref{p:enstrophy} may be useful in some of these situations.

For a lower bound, clearly $\mathcal E^{1, \infty}_q(\mathscr E) \geq \mathcal E^{0,\infty}_q( \mathscr E)$, and hence by Proposition~\ref{p:TlowerGen} we have
\begin{equation*}
  \mathcal E^{1, \infty}_q(\mathscr E)
    \geq \frac{1}{C \mathscr E}\,,
    \qquad
    \text{for every } q \in [1, \infty]\,,
\end{equation*}
for all sufficiently large~$\mathcal E$.
We may be able to improve this by at most a logarithmic factor using a detailed analysis of the behavior near saddle points.
However, as mentioned earlier, we do not know whether the upper bound~\eqref{e:E1upperBd} is optimal and we are unable to obtain a matching lower bound.

\subsection*{Plan of the paper}
  In Section~\ref{sec:lower-optimal} we prove the lower bounds in Theorem~\ref{t:energy} and Proposition~\ref{p:TlowerGen}.
In Section~\ref{s:enstrophy},
we use an elementary scaling argument to reduce Proposition~\ref{p:enstrophy} to obtaining an upper bound on a degenerate cell problem (Proposition~\ref{p:TepBound}).
In Section~\ref{s:TepBound} we prove Proposition~\ref{p:TepBound} using probabilistic techniques, modulo two lemmas concerning exit from / the return to the boundary layer.
These lemmas are proved in Sections~\ref{s:bl} and~\ref{a:innerEst}.
The proofs of these lemmas rely on certain large deviations estimates which relegated to Appendix~\ref{s:tubelemmas}.
The proof of the upper bound in Theorem~\ref{t:energy} is similar to the proof of Proposition~\ref{p:TepBound}, and is presented in Section~\ref{sec:energy}.

\subsection*{Acknowledgements}
The authors thank Charlie Doering, Jean-Luc Thiffeault, Ian Tobasco and Noel Walkington for helpful discussions.

\section{Lower bounds}\label{sec:lower-optimal}

In this section we prove the lower bound in Theorem~\ref{t:energy} and the generalized version in Proposition~\ref{p:TlowerGen}.
The main idea in the proof is to consider an incompressible flow that moves directly towards the cold boundary.
Of course, this flow penetrates the boundary of the domain and so is not an element of~$\mathcal V^{0, \infty}_\U$.
However, it can still be used to build a sub-solution and prove the desired lower bound.
Since the proof in a strip is short and explicit, we present it first.

\begin{proof}[Proof of the lower bound in Theorem~\ref{t:energy}]
  Let $\ubar T$ be the solution to
  \begin{equation*}
    -\frac{1}{2} \partial_y^2  \ubar T - \U \partial_y \ubar T = 1
  \end{equation*}
  in the strip $S$ with $\ubar T = 0$ $\partial_D S$ and $\partial_y \ubar T = 0$ on $\partial_N S$.
  Explicitly solving this yields
  \begin{equation}\label{e:ubarT}
    \ubar T(y)
      = \frac{e^{-2\U}}{2 \U^2} \paren[\big]{1 - e^{2\U(1 -y)}}
  + \frac{1 - y}{\U}
  \end{equation}
  and hence $\partial_y \ubar T \leq 0$.

  We now claim that for any velocity field~$v$ such that $v_2 \geq -\U$, the function $\ubar T$ is a sub-solution to~\eqref{e:T}--\eqref{e:TBC}.
  Indeed,
  \begin{equation*}
    -\frac{1}{2} \lap \ubar T + v \cdot \grad \ubar T
      = -\frac{1}{2} \partial_y^2 \ubar T + v_2 \partial_y \ubar T
      \leq -\frac{1}{2} \partial_y^2 \ubar T + \U \partial_y \ubar T = 1\,.
  \end{equation*}
  The last inequality above followed from the fact that $v_2 \geq -\U$ and $\partial_y \ubar T \leq 0$.

  Thus by the comparison principle, for every $v \in \mathcal V^{0, \infty}_\U$ we must have $0 \leq \ubar T \leq T^v$.
  Hence $\norm{T^v}_{L^q} \geq \norm{\ubar T}_{L^q}$ and computing $\norm{\ubar T}_{L^q}$ using~\eqref{e:ubarT} yields the lower bound in~\eqref{e:E1upperBdLinf} as claimed.
\end{proof}

In general domains the sub-solution isn't as explicit and needs to be constructed using the Eikonal equation.

\begin{proof}[Proof of Proposition~\ref{p:TlowerGen}]
  Let $v \in L^\infty(\Omega)$, and $T = T^v$ be the solution of~\eqref{e:T}.
  For any $\epsilon > 0$ let $\tilde{T}^{\epsilon, \lambda}$ be the solution to the following viscous Hamilton-Jacobi equation
  \begin{equation*}
    \begin{dcases}
      \lambda \tilde T^{\epsilon,\lambda} -\epsilon \lap \tilde{T}^{\epsilon,\lambda} + \abs{\grad \tilde T^{\epsilon,\lambda}} = 1 \,, & x \in \Omega \,, \\
      \tilde T^{\epsilon,\lambda} = 0 \,,  & x \in \partial \Omega \,.
    \end{dcases}
  \end{equation*}
  Note that $\tilde T^{\epsilon,\lambda} \geq 0$ as $0$ is a subsolution to this equation.
  It is well known (see for instance~\cite{Calder18,Tran21}) that for every $\lambda > 0$, $\tilde T^{\epsilon,\lambda}$ converges uniformly as $\epsilon \to 0$ to the viscosity solution of the  equation
  \begin{equation*}
    \begin{dcases}
    \lambda \tilde T^{0, \lambda} + \abs{\grad \tilde T^{0, \lambda} } = 1 \,, & x\in \Omega \,, \\
    \tilde T^{0, \lambda} = 0 \,, & x \in \partial \Omega \,.
    \end{dcases}
  \end{equation*}
  Now letting $\lambda \to 0$, $\tilde T^{0,\lambda}$ converges uniformly to the viscosity solution of the Eikonal equation 
  \begin{equation*}
    \begin{dcases}
    \abs{\grad \tilde T^{0,0} } = 1 \,, & x\in \Omega \,, \\
    \tilde T^{0,0} = 0 \,, & x \in \partial \Omega \,.
    \end{dcases}
  \end{equation*}

  We claim that $\ubar{T}^{\epsilon,\lambda} \defeq \epsilon \tilde T^{\epsilon,\lambda}$ is a sub-solution of~\eqref{e:T} provided $\epsilon \leq 1/\norm{v}_{L^\infty}$.
  Indeed,
  \begin{multline*}
    -\lap \ubar{T}^{\epsilon,\lambda} + v \cdot \grad \ubar{T}^{\epsilon,\lambda}
      \leq -\lap \ubar{T}^{\epsilon, \lambda} + \epsilon \norm{v}_{L^\infty} \abs{\grad \tilde T^{\epsilon, \lambda}}
      \\
      \leq -\lap \ubar{T}^{\epsilon,\lambda} + \abs{\grad \tilde T^{\epsilon,\lambda}}
        + \frac{\lambda}{\epsilon} \ubar{T}^{\epsilon,\lambda}
      = -\epsilon \lap \tilde{T}^{\epsilon,\lambda} + \abs{\grad \tilde T^{\epsilon,\lambda}}
        + \lambda \tilde{T}^{\epsilon,\lambda}
      = 1\,.
  \end{multline*}
  Since $\ubar{T}^{\epsilon, \lambda} = 0$ on $\partial \Omega$, and $T^v$ is nonnegative, the minimum principle implies~$\ubar{T}^{\epsilon,\lambda} \leq T^v$ in~$\Omega$.
  This immediately implies
  \begin{equation*}
    \frac{1}{\epsilon} \norm{T^v}_{L^q}
      \geq \frac{1}{\epsilon} \norm{\ubar{T}^{\epsilon,\lambda}}_{L^q}
      \xrightarrow{\epsilon\to 0}
      \norm{\tilde T^{0, \lambda}}_{L^q}
      \xrightarrow{\lambda \to 0}
      \norm{\tilde T^{0, 0}}_{L^q}\,.
  \end{equation*}
  Thus when~$\epsilon$ is sufficiently small we have
  \begin{equation*}
    \norm{T^v}_{L^q} \geq \frac{\epsilon}{2} \norm{\tilde T^{0, 0}}_{L^q}\,.
  \end{equation*}
  Consequently, if $\norm{v}_{L^\infty}$ is sufficiently large, we can choose $\epsilon = \frac{1}{\norm{v}_{L^\infty}}$ and obtain
  \begin{equation}\label{e:TvLower}
    \norm{T^v}_{L^q} \geq \frac{1}{2 \norm{v}_{L^\infty}} \norm{\tilde T^{0, 0}}_{L^q}\,.
  \end{equation}
  This immediately implies the bound~\eqref{e:E0infqLower} as claimed.
\end{proof}
\section{Upper bound for enstrophy constrained convection rolls (Proposition~\ref{p:enstrophy})}\label{s:enstrophy}

Our aim in this section is to prove Proposition~\ref{p:enstrophy}.
First note that by doubling the domain and using symmetry and rescaling we can reduce the problem to proving~\eqref{e:E1upperBd} on the domain
\begin{equation*}
  S_2 \defeq \R \times (-1, 1) \,,
  \qquad\text{with}\qquad
  \partial_N S_2 = \emptyset\,,
  \qquad
  \partial_D S_2 = \R \times \set{-1, 1} \,,
\end{equation*}
and only using velocity fields~$v$ for which
\begin{equation}\label{e:vSymm}
  v_1(x_1, -x_2) = v_1(x_1, x_2)
  \qquad\text{and}\qquad
  v_2(x_1, -x_2) = -v_2(x_1, x_2)\,.
\end{equation}
We will now prove the upper bound~\eqref{e:E1upperBd} by producing a velocity field~$v$ (depending on~$\mathscr E$) such that we have
\begin{equation}\label{e:intTv}
  \norm{ T^v}_{L^\infty}  \leq 
  C \abs{\ln \curE}^{13}
     \paren[\Big]{ \frac{1}{ \curE  }  }^{2/5}  \,,
\end{equation}
for all $\mathscr E$ sufficiently large.
We do this by forming convection rolls with height~$1$, width $\epsilon$ and amplitude $A_\epsilon/\epsilon^2$ for some small $\epsilon$ and large $A_\epsilon$ (see Figure~\ref{f:convection}).
Moreover, as we will see shortly, 
$\epsilon$ and $A_\epsilon$ should be chosen according to
\begin{equation} \label{e:AEpsilonPe}
  \frac{A_\epsilon}{\epsilon^3} = \curE \,.
\end{equation}

To construct~$v$, consider a Hamiltonian $H \colon \R^2 \to \R$ such that
\begin{subequations}
\begin{align}
  \label{e:Ham1}
  H(x_1, -1) &= H(x_1, 1) = 0\,,
  \\
  \label{e:Ham2}
  H(x_1, -x_2) &= -H(x_1, x_2)\,,
  \\
  H( x_1 + 2, x_2 ) &= H( x_1, x_2)\,,
  \label{e:Ham3}
\end{align}
\end{subequations}
for all $(x_1, x_2) \in \R^2$.
To obtain convection rolls of width~$\epsilon$ and height~$1$, we rescale the horizontal variable.
Define
\begin{equation}\label{e:HepVep}
   H^{\epsilon} (x_1,x_2) = H\paren[\Big]{ \frac{x_1}{\epsilon},  x_2 } \,,
  \quad \text{ and } \quad
  v^{\epsilon} = \frac{A_\epsilon}{\epsilon} \grad^\perp H^{ \epsilon} = \frac{A_\epsilon }{\epsilon}
  \begin{pmatrix}
  \partial_2 H^{\epsilon}\\ - \partial_1 H^{\epsilon}
      \end{pmatrix}\,,
\end{equation}
and let $T_{\epsilon} = T^{v^{\epsilon}}$.
By uniqueness of solutions to~\eqref{e:T} we see that $T_{\epsilon}$ satisfies $T_{\epsilon}( x_1 + 2 \epsilon, x_2) = T_{\epsilon}(x_1, x_2)$.
Thus, it is natural to make the change of variables
\begin{equation}\label{e:cvy}
  y_1 = \frac{x_1}{\epsilon}\,,
  \quad
  y_2 = x_2\,,
  \qquad\text{and}\qquad
  v = (v_1,v_2) = \grad^\perp_y H\,.
\end{equation}
In these coordinates we see that $T_{k,\epsilon}$ satisfies
\begin{equation}\label{e:Teps}
  A_\epsilon v \cdot \grad_y T_{\epsilon}
  - \frac{1}{2}\partial_{y_1}^2 T_{\epsilon}
  -  \frac{1}{2}\epsilon^2\partial_{y_2}^2 T_{\epsilon} = \epsilon^2 \,.
\end{equation}

Examining~\eqref{e:Teps} we see that in the horizontal direction the diffusion has strength~$1$.
However, since we impose periodic boundary conditions in this direction, there are no boundaries that provide a cooling effect directly felt by the horizontal diffusion.
In the vertical direction, the diffusion coefficient is~$\epsilon^2$, and so the cooling effect from the Dirichlet boundary~$\partial S_2$ will be felt in the domain in time $O(1/\epsilon^2)$.
Since our source (the right hand side of~\eqref{e:Teps}) is also $\epsilon^2$, we expect that the diffusion alone will ensure $T_\epsilon$ is of size $O(1)$ as $\epsilon \to 0$.
This would lead to the bound~$\mathcal E^{1,p}_q(\curE) \leq C$, which is far from optimal.

We claim that the convection term reduces this bound dramatically.
Indeed, through convection one can travel an~$O(1)$ distance in the vertical direction in time $1/A_\epsilon$.
Due to our no flow requirement $v \cdot \hat n = 0$ on $\partial S_2$, one can never reach the boundary of~$S_2$ through convection alone.
Thus, the cooling effect of the boundary~$\partial S_2$ must propagate into the domain through a combination of the effects of the slow vertical diffusion $\epsilon^2 \partial_{y_2}^2$ and the fast convection $A_\epsilon v \cdot \grad_y$.
Our aim is to estimate how much improvement this can provide over the crude $O(1)$ bound that can be obtained through diffusion alone.
This is our next result.
\begin{proposition}\label{p:TepBound}
  There exists a smooth Hamiltonian $H$ satisfying~\eqref{e:Ham1}--\eqref{e:Ham3}, and a constant $C$ such that the following holds.
  For every $\nu >0$,  and $A_\epsilon$  chosen such that $A_\epsilon \geq 1/\epsilon^\nu$ we have,
  \begin{equation}\label{e:TepBound}
    \norm{T_{\epsilon}}_{L^\infty}  \leq
    C\epsilon^2\paren[\Big]{ 1 + \frac{  \abs{\ln \epsilon}^{13}}{\epsilon\sqrt{A_\epsilon}} }
  \end{equation}
  for all sufficiently small~$\epsilon$.
  Here $T_\epsilon = T^{v^\epsilon}$, and~$v^\epsilon$ is given by~\eqref{e:HepVep}.
\end{proposition}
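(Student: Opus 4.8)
\emph{Proof plan.} The plan is to recast~\eqref{e:Teps} as an exit-time estimate. Let $\Sigma_\epsilon = \mathrm{diag}(1,\epsilon)$ and let $Y$ be the diffusion solving $dY_t = -A_\epsilon v^k(Y_t)\,dt + \Sigma_\epsilon\,dW_t$ on $(0,1)^2$ with periodic identification in the first coordinate, so that $\frac12\partial_{y_1}^2 + \frac12\epsilon^2\partial_{y_2}^2 - A_\epsilon v^k\cdot\nabla$ is its generator. Then~\eqref{e:Teps} with the Dirichlet condition says $T_{k,\epsilon}(y) = \mathbf E^y\bigl[\int_0^\tau\epsilon^2\,dt\bigr] = \epsilon^2\,\mathbf E^y[\tau]$, where $\tau$ is the first time the second coordinate of $Y$ reaches $\set{0,1}$. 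So~\eqref{e:TepBound} is equivalent to $\sup_y\mathbf E^y[\tau] \le C\bigl(1 + \abs{\ln\epsilon}^{13}/(\epsilon\sqrt{A_\epsilon})\bigr)$, and the rest of the proof would estimate this expected exit time.

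The geometry suggests splitting the trajectory relative to the separatrix $\Gamma = \set{H=0}$, which contains the cell walls, the hyperbolic critical points of $H$ at the corners, and --- crucially --- the whole Dirichlet boundary $\partial_D S$. Put $\delta = 2N/\sqrt{A_\epsilon}$ and call $\mathcal{B} = \set{\abs H<\delta}$ the \emph{boundary layer} and $\mathcal{B}^c$ the \emph{bulk}. On $\mathcal{B}^c$ the field $v^N$ vanishes, and $v^0$, though nonzero, is tangent to the level sets of $H$; either way $h_t \defeq H(Y_t)$ feels no drift from $v^k$ (since $\nabla H\cdot v^k=0$), so $h_t$ is a one-dimensional diffusion with bounded drift $\frac12(\partial_{y_1}^2H+\epsilon^2\partial_{y_2}^2H)(Y_t)$ and quadratic variation $\bigl((\partial_{y_1}H)^2+\epsilon^2(\partial_{y_2}H)^2\bigr)(Y_t)\,dt$. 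The argument would then rest on two lemmas, proved in Sections~\ref{s:bl} and~\ref{a:innerEst}. \emph{Return lemma:} started anywhere in $\mathcal{B}^c$, $Y$ re-enters $\mathcal{B}$ in expected time $O(1)$ --- in the bulk the dynamics is nondegenerate in the relevant sense (for $H^N$ it is pure anisotropic diffusion, and the It\^o drift of $h_t$ in any case points toward $\Gamma$), so this is a standard first-passage bound. \emph{Exit lemma:} started in $\mathcal{B}$, $Y$ reaches $\partial_D S$ before returning to $\mathcal{B}^c$ with probability at least $p_0$, where $p_0 \ge c\,\min\bigl(1,\,\epsilon\sqrt{A_\epsilon}\,\abs{\ln\epsilon}^{-a}\bigr)$, and the time it spends in $\mathcal{B}$ before either outcome is at most $\abs{\ln\epsilon}^{b}/A_\epsilon$ in expectation, for fixed $a,b$.

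Granting these, one obtains the bound on $\mathbf E^y[\tau]$ by a renewal argument. From an arbitrary $y$, the process reaches $\Gamma$ --- hence $\mathcal{B}$ --- in expected time $O(1)$ (over $O(1)$ times the vertical diffusion is negligible and the horizontal diffusion has unit strength). Thereafter $Y$ performs rounds, each a sojourn in $\mathcal{B}$ which with probability $\ge p_0$ terminates at $\partial_D S$ and otherwise ends back in $\mathcal{B}^c$, followed in the latter case by a return to $\mathcal{B}$; by the strong Markov property the number of rounds is dominated by a geometric variable of mean $1/p_0$, and each round costs expected time at most $\abs{\ln\epsilon}^b/A_\epsilon + O(1)$. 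Summing the geometric series and using $A_\epsilon\to\infty$,
\[
  \mathbf E^y[\tau] \;\le\; C + \frac1{p_0}\Bigl(\frac{\abs{\ln\epsilon}^b}{A_\epsilon}+C\Bigr) \;\le\; C\Bigl(1+\frac{\abs{\ln\epsilon}^{a}}{\epsilon\sqrt{A_\epsilon}}\Bigr),
\]
and tracking the logarithmic losses through the two lemmas lets one take the exponent to be $13$; substituting into $T_{k,\epsilon}=\epsilon^2\,\mathbf E[\tau]$ gives~\eqref{e:TepBound}.

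The hard part is the exit lemma, i.e.\ the lower bound on $p_0$. This is precisely the degenerate regime in which classical averaging and homogenization break down: the circulation period on a streamline $\set{H=h}$ diverges like $\abs{\ln\abs h}$ as $h\to0$, because the flow stagnates at the hyperbolic corners of $\Gamma$, so near $h=0$ neither the law of $h_t$ nor the absorption at $\partial_D S$ is governed by a regular effective diffusion. The plan is to analyze $Y$ directly in thin tubes around individual streamline segments, balancing two competitions. Along the pieces of $\Gamma$ adjacent to $\partial_D S$ the vertical coordinate is amplified or damped exponentially at a rate of order $A_\epsilon$ (the sign set by the horizontal position), so the $O(\epsilon)$ vertical noise $\epsilon\,dW^{(2)}$ must overcome it to reach $\set{Y^{(2)}=0}$; and in passing each corner the linearized flow stretches along one direction and squeezes along the other at rate $A_\epsilon$, so one must track how the vertical noise accumulated so far is magnified or compressed. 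Both are small-noise ($\epsilon\to0$) large-deviation estimates with the characteristic anisotropy --- unit noise horizontally, $O(\epsilon)$ noise vertically --- which are the ``tube lemmas'' of Appendix~\ref{s:tubelemmas}; iterating them around a single lap of $\Gamma$ is what produces the (surely non-sharp) power $13$.
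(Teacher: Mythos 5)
Your reformulation ($T_{k,\epsilon}=\epsilon^2\,\E^y\tau$, a boundary layer $\set{\abs{H}<c/\sqrt{A_\epsilon}}$, a renewal argument over successive visits to the layer, Girsanov tube lemmas for the corner estimates) is exactly the paper's skeleton. The gap is in the renewal bookkeeping, and it is not cosmetic. As you set it up, a ``round'' is one sojourn in $\mathcal B$ followed, on failure, by an $O(1)$ return from the bulk, and you need the per-round success probability $p_0\gtrsim\epsilon\sqrt{A_\epsilon}\,\abs{\ln\epsilon}^{-a}$ for the geometric series to close. But a single sojourn in a layer of thickness $O(1/\sqrt{A_\epsilon})$ lasts only $O(\abs{\ln\epsilon}/A_\epsilon)$ --- at most a bounded number of laps --- and, as your own heuristic (and the paper's) says, each lap succeeds with probability only $O(\epsilon)$ up to logarithms: one must land within $\epsilon/\sqrt{A_\epsilon}$ of the top edge, an $O(\epsilon)$ fraction of the layer. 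So for the round you defined, $p_0=O(\epsilon\cdot\mathrm{polylog})$, the geometric series sums to $O(1/\epsilon)$, and the factor $\sqrt{A_\epsilon}$ --- the entire content of the estimate --- is lost. A success probability of order $\epsilon\sqrt{A_\epsilon}$ is attainable only for a ``deep'' round aggregating $\sim\sqrt{A_\epsilon}$ laps (between visits to a level set $\set{H=d_1}$ at $O(1)$ depth), and that requires a matching \emph{upper} bound on the per-lap probability of escaping to depth $d_1$ --- a lap-counting argument your plan does not contain. Indeed your two stated numbers, sojourn time $\abs{\ln\epsilon}^b/A_\epsilon$ and success probability $\epsilon\sqrt{A_\epsilon}$, belong to two different notions of round and are mutually inconsistent.

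The paper closes this on the other side of the ledger: it keeps shallow rounds --- between $\partial\mathcal B_1$ and $\partial\mathcal B_5$, both at scale $1/\sqrt{A_\epsilon}$ --- accepts the per-round success probability $C\epsilon/\abs{\ln\delta}^{12}$ (Lemma~\ref{lem:lowerBoundExit}), and instead proves that the expected \emph{cost} of a failed round is only $C\abs{\ln\delta}/\sqrt{A_\epsilon}$, not $O(1)$ (Lemmas~\ref{lem:upperBoundEscape} and~\ref{lem:upperBoundReturn}). The point your $O(1)$ accounting misses is that most failed rounds never reach the bulk at all: the return from $\partial\mathcal B_5$ is controlled by an explicit supersolution $\phi=-\tfrac{S}{d_1}H\ln H-\tfrac{f(\Theta)}{AH}+\tfrac{\norm{f}_\infty}{\sqrt{A}}$ on $\set{1/\sqrt{A}\le\abs{H}\le d_1}$, whose first term charges the rare deep excursions their $O(1)$ cost (the uniform bound $S$ of Lemma~\ref{lem:innerEst}) weighted by their $O(1/\sqrt{A_\epsilon})$ hitting probability. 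That supersolution (Lemma~\ref{lem:supersolution}), together with a corner tube lemma valid on the time scale $\abs{\ln\delta}/A$ rather than $1/A$, is the substantive content of the proof and is absent from your plan. A minor further point: a round that starts on $\partial\mathcal B$ and ends at the first exit from $\mathcal B$ is degenerate (the process recrosses $\partial\mathcal B$ instantly), which is precisely why two nested radii are needed.
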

\begin{remark}
  We believe the bound~\eqref{e:TepBound} is true for every smooth, non-degenerate cellular flow $v$ (with a constant $C = C(v)$), provided $\nu \geq 2$.
  To obtain~\eqref{e:TepBound} for all $\nu > 0$, our proof requires the velocity field~$v$ to be exactly linear near the vertical cell boundaries.
  We do not know whether~\eqref{e:TepBound} remains true for $\nu \in (0, 2)$ without this assumption.
  We note, however, that choosing $\nu \in (0, 2)$ does not lead to an improved bound as in this range the constant term on the right of~\eqref{e:TepBound} will eliminate any benefit obtained from further increasing the amplitude.
\end{remark}
\begin{remark}
  For simplicity, the velocity field we construct to prove Proposition~\ref{p:TepBound} will be chosen to be exactly linear near cell corners.
  This assumption is mainly present as it leads to a technical simplification of the proof of Proposition~\ref{p:TepBound}.
  Since the proof of Proposition~\ref{p:enstrophy} only requires us to produce one velocity field~$v$ satisfying~\eqref{e:intTv}, we only state and prove Proposition~\ref{p:TepBound} for a specific cellular flow, instead of generic cellular flows.
\end{remark}

We prove Proposition~\ref{p:TepBound} using probabilistic techniques in the next section.
Proposition~\ref{p:enstrophy} follows immediately from Proposition~\ref{p:TepBound} by scaling.
\begin{proof}[Proof of Proposition~\ref{p:enstrophy}]
  By definition, we have
  \begin{equation*}
    v^{\epsilon}(x_1, x_2)
    = \frac{A_\epsilon}{\epsilon} \grad^\perp H^{ \epsilon}(x_1, x_2)
    = 
    \frac{A_\epsilon}{\epsilon^2} 
    \begin{pmatrix}
      \epsilon v_1\paren{y_1, y_2} \\
      v_2\paren{y_1, y_2}
    \end{pmatrix} \,,
  \end{equation*}
  and hence
  \begin{equation*}
    \grad_x v^\epsilon
      = \frac{A_\epsilon}{\epsilon^3}
  \begin{pmatrix}
    \epsilon \partial_{y_1} v_1 & \epsilon^2 \partial_{y_2} v_1 \\
    \partial_{y_1} v_2 & \epsilon \partial_{y_2} v_2
  \end{pmatrix}
  \end{equation*}
  Therefore, as $\epsilon \to 0$, we have
  \begin{align*}
    \mathscr E = \norm{v^\epsilon}_{W^{1,p}}
    = O\paren[\Big]{ \frac{A_\epsilon}{\epsilon^{3}}  } \,.
  \end{align*}
  Choosing $A_\epsilon = 1/\epsilon^\nu$,
  we have for large enough $\curE$,
  \begin{equation}\label{e:eEpsilon}
    \curE  = O\paren[\Big]{ \frac{1}{\epsilon^{3+\nu}}  } 
    \quad \text{and} \quad 
    \epsilon = O\paren[\Big]{ \frac{1}{\curE^{1/(3+\nu)}}}
  \end{equation}
  Combining this with~\eqref{e:TepBound}, we have  
  \begin{equation*}
    \norm{T_{\epsilon}}_{L^\infty} 
    \leq  C \paren[\Big]{\epsilon^2 +  \epsilon^{1 + \nu/2} \abs{\ln{\epsilon}}^{13} } \,.
  \end{equation*}
  Rewriting this in terms of~$\mathscr E$ using~\eqref{e:eEpsilon} and choosing $\nu = 2$ shows
  \begin{equation*}
    \norm{T_{\epsilon}}_{L^\infty} 
     \leq  
     C \frac{\abs{\ln \curE}^{13}}{\curE^{2/5}}  \,.
  \end{equation*}
  This implies~\eqref{e:E1upperBd} as desired.
\end{proof}

\section{Exit time from tall and thin cells (proof of Proposition~\ref{p:TepBound})}\label{s:TepBound}

Our aim in this section is to prove Proposition~\ref{p:TepBound}.
For ease of notation we will now write $v = v^\epsilon$, $T = T_\epsilon$, $A = A_\epsilon$.
Let $Z^\epsilon$ be a solution to the SDE
\begin{equation}\label{eq:degenerateProcess}
  dZ^\epsilon_t = A v(Z^\epsilon) \, ds
  + \sigma
  \, dB_t \,,
  \qquad\text{where}\qquad
  \sigma \defeq  \begin{pmatrix} 1 & 0\\0 & \epsilon \end{pmatrix}\,.
\end{equation}
Here~$B$ is a standard two dimensional Brownian motion.
For convenience let $Z^\epsilon = (Z^{\epsilon}_1, Z^{\epsilon}_2)$, and let
\begin{equation}\label{e:tauDef}
  \tau^\epsilon = \inf \set{ t \st Z^{\epsilon}_{2,t} \not\in (-1, 1) }
\end{equation}
be the first exit time of $Z^\epsilon$ from the strip $S_2$.
(Here the notation~$Z^\epsilon_{2,t}$ refers to~$(Z^\epsilon_2)_t$, the value of the process~$Z^\epsilon_2$ at time~$t$.)
By the Dynkin formula we know~$T_\epsilon(z) = \epsilon^2 \E^z \tau^\epsilon$.

Before delving into the details of the proof of Proposition~\ref{p:TepBound}, we now briefly explain the main idea.
Consider many tracer particles evolving according to~\eqref{eq:degenerateProcess}.
First, we note that particles near $\partial S_2$ get convected away from~$\partial S_2$ in time $O(1/A)$.
In this time, these particles can travel a distance of $O(\epsilon / \sqrt{A})$ in the vertical direction through diffusion.
Thus, if we can ensure particles get to within a distance of $O(\epsilon / \sqrt{A})$ from $\partial S_2$, then they will exit quickly with probability at least $p_0$, for some small $p_0 > 0$ that is independent of $\epsilon$.

We claim that in the boundary layer, every $O(1/\sqrt{A})$ seconds%
\footnote{
  The diffusion may carry particles into the interior of the cell before they exit at $\partial S_2$.
  These particles will now take~$O(1/\sqrt{A})$ time to return to the boundary layer, which is why the time taken here is~$O(1/\sqrt{A})$, and not the convection time $O(1/A)$.
}
tracer particles will pass within a distance of $O(\epsilon / \sqrt{A})$ from $\partial S$.
Every pass has an $O(\epsilon)$ probability of being within $\epsilon/\sqrt{A}$ away from $\partial S_2$, and so a probability $O(\epsilon)$ of exiting from $\partial S_2$.
This suggests
\begin{equation}\label{e:tauIdealBd}
  \sup_{z \in S_2} \E^z \tau^\epsilon \leq C \paren[\Big]{
    1
    + \frac{\epsilon}{\sqrt{A}}
    + \frac{ (1-\epsilon)  2\epsilon}{\sqrt{A}}
    + \frac{ (1-\epsilon)^2 3\epsilon}{\sqrt{A}} + \cdots
  }
  = C \paren[\Big]{1 + \frac{1}{\epsilon \sqrt{A}} }\,,
\end{equation}
which is dramatically better than the crude $O(1/\epsilon^2)$ bound obtained by using diffusion alone.

A second look at the above argument suggests that~\eqref{e:tauIdealBd} should have a logarithmic correction.
Indeed, the flow~$v$ has hyperbolic saddles at cell $\set{-1, 0, 1} \times \Z$ which causes a logarithmic slow down of particles close to it.
As a result, we are able to prove the following bound on $\E \tau^\epsilon$.
\begin{proposition}\label{p:tauBd}
  Let $\nu >0$ and $A \geq 1/\epsilon^\nu$.
  There exists a cellular flow~$v$ and a constant $C$ such that
  \begin{equation}\label{e:tauBd}
    \sup_{z \in S_2} \E^z \tau^\epsilon \leq C \paren[\Big]{1+  \frac{ |\ln \epsilon|^{13}}{\epsilon \sqrt{A}} } \,,
  \end{equation}
  holds for all sufficiently small~$\epsilon$.
\end{proposition}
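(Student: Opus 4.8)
The plan is to run a regeneration argument for the degenerate diffusion $Z^\epsilon$ from~\eqref{eq:degenerateProcess}, built around the layered structure of the flow. I will choose the Hamiltonian $H$ so that $v$ is exactly linear near each of the four cell corners, and partition the cell $\Omega=(0,1)^2$ into an interior \emph{core} $\{H\ge 2N/\sqrt A\}$, on which $v\equiv 0$; a \emph{boundary layer} $\mathcal B=\{H\le N/\sqrt A\}$, on which the flow is untruncated --- a union of thin horizontal strips hugging $\partial_D S=(0,1)\times\{0,1\}$, thin vertical strips hugging the cell walls $\{0,1\}\times(0,1)$, and small boxes around the corners --- and a transition annulus in between. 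The argument is keyed to the successive visits of $Z^\epsilon$ to $\mathcal B$: I will bound (i) the time to reach $\mathcal B$ from an arbitrary point of $\Omega$, and (ii) how many visits to $\mathcal B$, and how much time per visit, are needed before $Z^\epsilon_2$ hits $\{0,1\}$. These will be isolated as two lemmas (proved in Sections~\ref{s:bl} and~\ref{a:innerEst}), both resting on large-deviation ``tube'' estimates (Appendix~\ref{s:tubelemmas}) that confine $Z^\epsilon$ to a neighborhood of a nominal streamline on the relevant time scale.

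For the \emph{return estimate} (i), note that on the core the generator is $\tfrac12\partial_1^2+\tfrac12\epsilon^2\partial_2^2$, whose fast component is the horizontal diffusion. Since $Z^\epsilon_1$ is $1$-periodic, the horizontal Brownian motion reaches the vertical slab $\{y_1\le 2N/\sqrt A\}\cup\{y_1\ge 1-2N/\sqrt A\}$, hence the transition annulus, in expected time $O(1)$; from there the nearly vertical convection in the vertical strips carries the particle into a corner box, where I use the linear structure of $v$ to bound the residence time. This should give $\sup_{z\in\Omega}\E^z[\text{hitting time of }\mathcal B]=O(|\ln\epsilon|^{c})$ for some fixed $c$, with comparable tail bounds; the logarithm enters through the hyperbolic stagnation points at the corners, which slow trajectories down logarithmically.

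For the \emph{exit estimate} (ii), I track trips of $Z^\epsilon$ around a cell-boundary streamline inside $\mathcal B$. One loop takes time $O(|\ln\epsilon|/A)$ --- again the log from the corner slowdown --- and Itô's formula applied to $h_t=H(Z^\epsilon_t)$, whose quadratic variation rate is $(\partial_1 H)^2+\epsilon^2(\partial_2 H)^2$ (of order $1$ along the vertical portions of the loop and order $\epsilon^2$ along the horizontal ones), shows that one loop spreads $h$ by $O(1/\sqrt A)$. Hence over $O(|\ln\epsilon|^{b})$ loops --- i.e.\ in time $O(|\ln\epsilon|^{b+1}/A)\le O(|\ln\epsilon|^{b+1}/\sqrt A)$ --- the particle either leaves $\mathcal B$ through the transition annulus, or comes within vertical distance $O(\epsilon/\sqrt A)$ of $\partial_D S$; in the latter case an Ornstein--Uhlenbeck-type comparison near the horizontal boundary shows that it then exits $S$ with probability at least some fixed $p_0>0$. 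I would package this as: started in $\mathcal B$, in time $O(|\ln\epsilon|^{b+1}/\sqrt A)$ the particle exits $S$ with probability $\ge c_0\epsilon/|\ln\epsilon|^{b'}$, and otherwise returns to the transition annulus. The main obstacle is precisely here: the noise is genuinely degenerate and anisotropic and the corners are hyperbolic, so one cannot invoke standard homogenization or averaging for cellular flows; the confinement (tube) estimates must be established by hand from asymmetric large deviations, carefully tracking both the $O(\epsilon/\sqrt A)$ transverse spreading and the logarithmic corner slowdown --- this is what produces the (large but explicit) power of $|\ln\epsilon|$.

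To conclude, let $\sigma_0$ be the hitting time of $\mathcal B$ and $\sigma_0<\sigma_1<\sigma_2<\cdots$ the cycle times supplied by (ii). By (ii) and the strong Markov property, the number $M$ of cycles completed before exit is stochastically dominated by a geometric variable with success probability $\asymp\epsilon/|\ln\epsilon|^{b'}$, so $\E M=O(|\ln\epsilon|^{b'}/\epsilon)$; since each cycle has conditional expected duration $O(|\ln\epsilon|^{b+1}/\sqrt A)$, Wald's identity yields a contribution $O(|\ln\epsilon|^{b+b'+1}/(\epsilon\sqrt A))$, which, after optimizing the exponents $b,b'$, is $O(|\ln\epsilon|^{13}/(\epsilon\sqrt A))$. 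Between cycles the particle sits in the transition annulus, but a cycle may also wander into the core; crucially, by (ii) this happens with probability $O(\epsilon/|\ln\epsilon|^{b'})$ per cycle, so the expected number of core excursions over the whole life of the trajectory is $O(1)$, each contributing expected time $O(1)$ by (i). This last point gives the additive ``$1$'' in~\eqref{e:tauBd}, into which $\E\sigma_0=O(|\ln\epsilon|^c)$ is absorbed since $|\ln\epsilon|^c\le|\ln\epsilon|^{13}/(\epsilon\sqrt A)$ for $\epsilon$ small. Summing the contributions gives~\eqref{e:tauBd}.
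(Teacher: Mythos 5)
Your overall architecture matches ours: an $O(1)$ estimate for reaching the boundary layer, followed by a renewal argument in which each trial exits through $\partial_D S$ with probability at least $C\epsilon/|\ln\epsilon|^{b'}$, and a geometric/Wald summation. The genuine gap is in the per-trial \emph{duration}. Your cycles do not remain in $\mathcal B=\{H\le N/\sqrt A\}$: as you yourself compute, one loop spreads $H(Z_t)$ by $O(1/\sqrt A)$, while the layer has width $O(N/\sqrt A)$ with $N$ fixed, so the process leaves $\mathcal B$ after $O(1)$ loops with probability bounded below. Consequently excursions into the transition annulus and the core are not rare events of probability $O(\epsilon/|\ln\epsilon|^{b'})$ per cycle, as you assert --- they occur with probability $\Theta(1)$ per cycle, hence $\Theta(|\ln\epsilon|^{b'}/\epsilon)$ times over the life of the trajectory. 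The only tool you offer for the duration of such an excursion is the crude $O(1)$ bound from your step (i), which would contribute a total of order $1/\epsilon$ to $\E^z\tau^\epsilon$; since $\sqrt A\to\infty$ polynomially, $1/\epsilon$ dominates $|\ln\epsilon|^{13}/(\epsilon\sqrt A)$ and the claimed bound is lost. (You also give no bound at all for the time spent ``sitting in the transition annulus'' between cycles.)

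What is missing is a quantitative return estimate: from $\partial\mathcal B_5$ (or from the edge of your transition annulus) the expected time to re-enter $\mathcal B_1$ is $O(|\ln\delta|/\sqrt A)$, not $O(1)$. This is the content of our Lemma~\ref{lem:upperBoundReturn}, proved by constructing an explicit supersolution of the form $-(S/d_1)H\ln H - f(\Theta)/(AH)+\dots$ on the annulus $\{1/\sqrt A\le H\le d_1\}$ (Lemma~\ref{lem:supersolution}); the crude $O(1)$ bound of step (i) enters only through the boundary datum on $\{H=d_1\}$, where it is damped by the $-H\ln H$ barrier. With this estimate the per-cycle duration becomes $O(|\ln\delta|/\sqrt A)$ and the summation yields $|\ln\delta|^{13}/(\epsilon\sqrt A)$ as desired. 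The remaining ingredients of your outline --- the tube estimates steering the trajectory past the hyperbolic corners, which produce the polylogarithmic loss in the per-trial success probability, and the Ornstein--Uhlenbeck-type exit estimate within $O(\delta)$ of the top --- correspond to our Lemmas~\ref{lem:lowerBoundExit} and~\ref{lem:top-est} and are in the right spirit, though you do not derive the exponent $13$ so much as posit it.
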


Of course Proposition~\ref{p:tauBd} immediately implies Proposition~\ref{p:TepBound}.
\begin{proof}[Proof of Proposition~\ref{p:TepBound}]
  Since $T(z) = \epsilon^2 \E^z \tau^\epsilon$, the estimate~\eqref{e:tauBd} implies~\eqref{e:TepBound} as desired.
\end{proof}

We now describe the flow~$v$ that will be used in Proposition~\ref{p:tauBd}.
As remarked earlier, we expect Proposition~\ref{p:tauBd} to hold for any generic non-degenerate cellular flow.
However, the specific form we describe below simplifies many technicalities.
For notational convenience, we will now restrict our attention to the rectangle
\begin{equation}\label{e:OmegaPrime}
  \Omega' \defeq (0, 2) \times (-1, 1)\,.
\end{equation}

\begin{asparaenum}[\itshape{Assumption} 1:]
  \item\label{A1}
  The function~$H \colon \R^2 \to [-1, 1]$ is $C^2$ with $\norm{H}_{C^2} \leq 100$ and is $2$-periodic in both $x_1$ and $x_2$.
  The level set $\set{H = 0}$ is precisely  $(\R \times \Z) \cup (\Z \times \R)$.
  Moreover, $H(1/2, 1/2) = 1$, $H(3/2, 1/2) = -1$ and these both correspond to non-degenerate critical points of~$H$.
  All other critical points of~$H$ are hyperbolic and lie on the integer lattice $\Z^2$.

  \item\label{A2}
  There exists $c_0 \in(0,  1/10)$ such that
  for
  \begin{equation}\label{e:Q0def}
    Q_0 \defeq (-2c_0, 2c_0)^2
  \end{equation}
  we have
  \begin{equation}\label{e:Hquadratic}
    H(x_1,x_2) = \begin{cases}
      x_1 x_2        & (x_1, x_2) \in Q_0\,,           \\
      (1-x_1) x_2    & (x_1, x_2) \in Q_0 + (1, 0)\,,  \\
      x_1(1-x_2)     & (x_1, x_2) \in Q_0 + (0, 1) \,, \\
      (1-x_1)(1-x_2) & (x_1, x_2) \in Q_0 + (1,1)\,.
    \end{cases}
  \end{equation}

  \item\label{A3}
  There exists a constant $h_0$ such that for $x \in \set{\abs{H} < h_0}$ and $i \in \set{1,2}$,
  \begin{equation*}
    \sign \partial_i^2 H = - \sign H  \,.
  \end{equation*}

  \item \label{A4}
  In the region $\set{\abs{H} \leq h_0} \cap (i + (-c,c))\times \R  $, where $i \in \Z$, 
  \begin{equation}
    \partial_1 v_2 = - \partial^{2}_{1} H = 0 \,.
  \end{equation}
\end{asparaenum}

Apart from non-degeneracy and normalization, the main content of the first assumption is that $H$ only has one critical point in the interior of every square of side length~$1$ with vertices on the integer lattice.
This is the main geometric restriction imposed on the Hamiltonian~$H$.
Assumptions~\ref{A2}--\ref{A3} are not necessary, but lead to technical simplifications of the proof.
Finally, Assumption~\ref{A4} is only required for the exit time bounds we obtain (Lemma~\ref{l:expectedExitBoundary}, below) to be valid when $A \leq 1 / \epsilon^2$.
Notice that in the proof of Proposition~\ref{p:enstrophy} we only use $A \approx 1/\epsilon^2$, and so Assumption~\ref{A4} is not essential.
We elaborate on this in Remark~\ref{r:A5}, below.

Now we split the proof of Proposition~\ref{p:tauBd} into two steps: estimating the time taken to reach the boundary layer, and then estimating the time taken to exit from the boundary layer.
In time $1/A$, the process $Z^\epsilon$ will typically travel a distance of
\begin{equation*}
    \delta \defeq \frac{\epsilon}{\sqrt{A}} \,,
\end{equation*}
    in the vertical direction.
    Given $\alpha > 0$ define the boundary layer (see Figure~\ref{f:boundary-layer}) $\mathcal B_\alpha$ by
\begin{equation*}
  \cH_{\alpha} = \cH_\alpha^\epsilon \defeq \set[\Big]{ \abs{H} <  \frac{\alpha}{\sqrt{A}}}
  \,.
\end{equation*}
\begin{figure}[htb]
    \begin{center}
      \includegraphics[width=.6\linewidth]{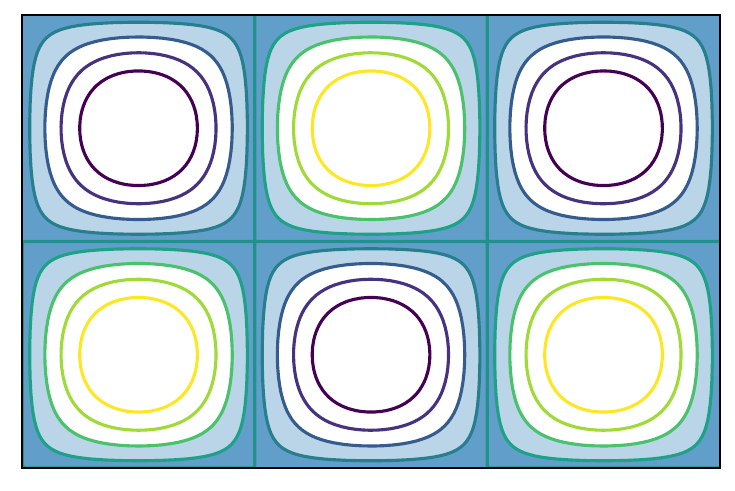}
      \caption{Boundary layer $\cB_1$ (dark blue) and boundary layer $\cB_5$ (union of light and dark blue). }
      \label{f:boundary-layer}
    \end{center}
  \end{figure}

\begin{lemma} \label{l:expectedExitBoundary}
  Let $\nu >0$ and
  suppose $A \geq 1/\epsilon^\nu$.
  There exists a constant $C$ such that
  \begin{equation}
    \sup_{z\in \bar{\cH}_1}\E^z  \tau^\epsilon
    \leq \frac{ C\abs{\ln\delta}^{13}  }{\epsilon \sqrt{A} } \,.
  \end{equation}
  Here $\bar{\mathcal B}_1$ denotes the closure of~$\mathcal B_1$.
\end{lemma}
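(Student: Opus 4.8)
The plan is to estimate $\E^z\tau^\epsilon$ for $z\in\bar{\cH}_1$ by tracking the process through successive passes near the Dirichlet boundary $\partial_D S$, each of which has a roughly $O(\epsilon)$ chance of escaping. Concretely, I would introduce the innermost boundary layer $\cH_\delta = \{\abs{H} < \delta/\sqrt{A}\}$ (recall $\delta = \epsilon/\sqrt{A}$), so that once the process $Z^\epsilon$ reaches $\cH_\delta$ it is within vertical distance $O(\delta)$ of $\partial_D S$; since the vertical diffusion has strength $\epsilon$, in time $O(\delta^2/\epsilon^2) = O(1/A)$ the process has an $\epsilon$-independent probability $p_0 > 0$ of crossing $\{y_2 = 0\}$ or $\{y_2 = 1\}$ and exiting. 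The main quantitative input is a bound on the time of one "loop": starting from anywhere in $\bar\cH_1$, how long until the process either exits $S$ or returns to $\cH_\delta$? The heuristic in~\eqref{e:tauIdealBd} says this should be $O(1/\sqrt{A})$, but because $H$ has hyperbolic saddles on $\Z^2$ (Assumptions~\ref{A1}--\ref{A3}), a particle swept along a cell edge is slowed logarithmically near each corner, giving instead $O(\abs{\ln\delta}^{13}/\sqrt{A})$ per loop. I would isolate this as the content of the two lemmas the paper promises (the exit-from / return-to the boundary layer estimates proved in Sections~\ref{s:bl} and~\ref{a:innerEst}), and here simply invoke them.

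The steps, in order. \textbf{Step 1:} Set up the renewal/excursion decomposition. Let $\sigma_0 = 0$ and define stopping times $\sigma_1, \sigma_2, \dots$ alternating between "hitting $\cH_\delta$" and "returning to $\bar\cH_1$ (or exiting)"; let $M$ be the number of passes through $\cH_\delta$ before $\tau^\epsilon$. \textbf{Step 2:} Show each pass through $\cH_\delta$ produces an exit with probability $\geq c\epsilon$ uniformly, using the small-vertical-diffusion argument above together with Assumption~\ref{A5} (so that near the vertical cell walls $v_2$ does not oscillate, and the one-dimensional vertical motion over the relevant timescale is a genuine diffusion with drift bounded independently of the horizontal position — this is precisely the point Remark~\ref{r:A5} flags as needing $A\geq 1/\epsilon^2$ without it). Hence $\E M \leq C/\epsilon$. \textbf{Step 3:} Bound the expected duration of a single excursion, $\E[\sigma_{j+1}-\sigma_j]$, by $C\abs{\ln\delta}^{13}/\sqrt{A}$; this is where the cited boundary-layer lemmas (large-deviations tube estimates from Appendix~\ref{s:tubelemmas}) enter — the $O(1/\sqrt{A})$ convection/diffusion time along the edge is inflated by the logarithmic dwell near each of the four corners of a cell, and keeping careful track of powers yields the exponent $13$. \textbf{Step 4:} Also bound the expected time to first reach $\cH_\delta$ starting from a general $z\in\bar\cH_1$ by the same $C\abs{\ln\delta}^{13}/\sqrt{A}$. \textbf{Step 5:} Combine: by Wald's identity (after checking the integrability needed to apply it, e.g.\ via uniform exponential tails on each excursion length coming from the strong Markov property and a geometric number of sub-excursions),
\begin{equation*}
  \sup_{z\in\bar\cH_1}\E^z\tau^\epsilon
  \;\leq\; (\E M + 1)\cdot \frac{C\abs{\ln\delta}^{13}}{\sqrt{A}}
  \;\leq\; \frac{C'\abs{\ln\delta}^{13}}{\epsilon\sqrt{A}}\,,
\end{equation*}
which is the claimed bound. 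Note $A\geq 1/\epsilon^\nu$ makes $\abs{\ln\delta}\asymp\abs{\ln\epsilon}$, so the statement is consistent with Proposition~\ref{p:tauBd}.

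The main obstacle is \textbf{Step 3}: controlling the excursion time uniformly in the starting point, including starting points very close to a hyperbolic saddle, where the deterministic flow takes arbitrarily long to leave a neighborhood of the corner and the process relies on noise to get unstuck. This requires the asymmetric large-deviations / tube estimates mentioned in the introduction — ordinary homogenization-type averaging fails because the generator degenerates (no vertical diffusion to leading order inside a cell). Quantifying how the $\abs{\ln\delta}$ factors from the four corners compound, and why the power is $13$ rather than something smaller, is the delicate bookkeeping; I would defer the sharp version to Lemmas~\ref{lem:upperBoundEscape} and the inner estimate, and in the present proof treat those bounds as black boxes. A secondary technical point is justifying Wald/the renewal sum rigorously (Step 5), which needs the excursion lengths to have enough integrability — handled by showing each excursion is dominated by a geometric sum of i.i.d.-like pieces with exponential moments via the strong Markov property.
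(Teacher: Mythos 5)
Your overall architecture is the same as the paper's: a renewal decomposition into excursions that alternate between the boundary layer and a slightly larger layer, a lower bound on the per-excursion exit probability, an upper bound on the per-excursion duration, and a geometric-series (or Wald-type) summation. The paper implements exactly this with the stopping times $\sigma_n,\tilde\sigma_n$ shuttling between $\partial\cB_1$ and $\partial\cB_5$, Lemma~\ref{lem:lowerBoundExit} for the success probability, and Lemmas~\ref{lem:upperBoundEscape}--\ref{lem:upperBoundReturn} for the duration.

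However, your quantitative allocation of the logarithmic factors is wrong in a way that creates a genuine gap. In Step 2 you assert that each pass exits with probability at least $c\epsilon$ uniformly, hence $\E M\leq C/\epsilon$ with no logarithmic loss. That is not what the supporting machinery delivers: to guide the process from $\partial\cB_1$ to within $O(\delta)$ of $\partial_D S$ one must run a Girsanov tube estimate past several hyperbolic corners, and each corner passage takes time $O(\abs{\ln\delta}/A)$ during which the tube event has probability only $O(1/\abs{\ln\delta}^2)$ (Lemma~\ref{l:tubecorner}); compounding these gives the paper's Lemma~\ref{lem:lowerBoundExit}, namely a per-trial success probability of only $C\epsilon/\abs{\ln\delta}^{12}$. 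Symmetrically, your Step 3 attributes a factor $\abs{\ln\delta}^{13}$ to the excursion \emph{duration}, but the duration is in fact only $C\abs{\ln\delta}/\sqrt{A}$ (escape to $\partial\cB_5$ in time $C\abs{\ln\delta}/A$ plus return to $\partial\cB_1$ in time $C\abs{\ln\delta}/\sqrt{A}$, via the supersolution construction). Your final product coincides with the correct bound only because you moved the missing $\abs{\ln\delta}^{12}$ from the probability into the duration; neither factor as you state it is justified, and Step 2 in particular is a claim the corner estimates cannot support. A secondary slip: reaching $\cH_\delta=\set{\abs{H}<\delta/\sqrt{A}}$ does \emph{not} place the process within $O(\delta)$ of $\partial_D S$, since that set is a neighborhood of the entire separatrix including the interior vertical walls; the correct target for the exit step is a thin box such as $Q^\delta_{\text{top}}$ adjacent to the Dirichlet boundary, which is why the paper routes the trajectory through the side column of Lemma~\ref{lem:side-est} and the corner estimate of Lemma~\ref{lem:corner-est} before applying Lemma~\ref{lem:top-est}.
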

\begin{remark}\label{r:A5}
  In the proof of Lemma~\ref{l:expectedExitBoundary} we will see that if $H$ doesn't satisfy Assumption~\ref{A4}, then Lemma~\ref{l:expectedExitBoundary} is only valid if $\nu \geq 2$ (see Remark~\ref{r:A51}, below).
  It turns out that choosing $\nu \leq 2$ provides no additional advantage in the proof of Proposition~\ref{p:tauBd}.
  This is because when $\nu \leq 2$, the constant term on the right of~\eqref{e:tauBd} dominates, and  we get no improvement on $\E^z \tau^\epsilon$.
\end{remark}

\begin{lemma}\label{lem:innerEst}
  For $\alpha > 0$ define
  \begin{equation}\label{e:etaDef}
     \eta_\alpha = \eta^\epsilon_\alpha \defeq
     \inf\set[\big]{ t>0 \st Z^\epsilon_t \in \partial \mathcal B_\alpha  }
  \end{equation}
  be the first time the process $Z^\epsilon_t$ hits $\partial \mathcal B_\alpha$.
  There exists a constant $C$, independent of~$\alpha$, such that
  \begin{equation*}
    \sup_{z\in \cH_\alpha^c} \E^z \eta^\epsilon_\alpha \leq C
  \end{equation*}
  for all sufficiently small~$\epsilon$.
  (Here $\mathcal B_\alpha^c$ is the complement of $\mathcal B_\alpha$.)
\end{lemma}
A proof of Lemma~\ref{lem:innerEst} using a blow-up argument can be found in~\cite{IshiiSouganidis12}.
We present a different proof of this fact (in Section~\ref{a:innerEst}, below) by constructing a supersolution based on the Freidlin averaging problem~\cite{FreidlinWentzell12}.

Momentarily postponing the proofs of Lemmas~\ref{l:expectedExitBoundary} and~\ref{lem:innerEst}, we prove Proposition~\ref{p:tauBd}.

\begin{proof}[Proof of Proposition~\ref{p:tauBd}]
  If $z \not\in \mathcal B_1$, the strong Markov property, Lemmas~\ref{l:expectedExitBoundary} and~\ref{lem:innerEst} imply
  \begin{align}
    \nonumber
    \E^z \tau^\epsilon  
    &=  \E^z \eta^\epsilon_1 + ( \tau^\epsilon - \eta^\epsilon_1 )
    =  \E^z \paren[\big]{ \eta^\epsilon_1 + ( \tau^\epsilon - \eta^\epsilon_1 ) \st \cF_{\eta^\epsilon_1}} \\
    \label{e:ezt1}
    &\leq C + \E^z \sup_{z' \in \bar B_1} \E^{z'} \tau^\epsilon
    \leq C\paren[\Big]{  1 + \frac{ \abs{\ln\delta}^{13}  }{\epsilon \sqrt{A} } } \,.
  \end{align}
  If $z\in \mathcal B_1$, then Lemma~\ref{l:expectedExitBoundary} directly implies~\eqref{e:ezt1}.
  Thus in either case we have~\eqref{e:tauBd}, as desired.
\end{proof}

\section{Exit from the Boundary layer (Lemma~\ref{l:expectedExitBoundary})}\label{s:bl}
In this section, we will prove Lemma~\ref{l:expectedExitBoundary}.
We will fix
 $\nu >0$ and suppose $A \geq 1/\epsilon^\nu$ as in the hypothesis of Lemma~\ref{l:expectedExitBoundary} through out this section.
 Furthermore, for notational convenience, we will now drop the explicit $\epsilon$ dependence from $Z^\epsilon$ and $A$.

 The main idea behind the proof of Lemma~\ref{l:expectedExitBoundary} is to focus our attention on trajectories in the boundary layer~$\mathcal B_1$, until they leave the bigger boundary layer~$\mathcal B_5$.
 Our first lemma estimates the chance of starting in~$\mathcal B_1$ and exiting the strip~$S_2$, before exiting the bigger boundary layer~$\mathcal B_5$.
\begin{lemma} \label{lem:lowerBoundExit}
  There exists a constant $C>0$, independent of $\epsilon$, such that
  \begin{equation} \label{ine:exitprobabilitynearbd}
    \inf_{z\in \cH_1}
    \P^z(  \tau^\epsilon <\eta^\epsilon_{5}) \geq \frac{C \epsilon}{\abs{\ln \delta}^{12}}
  \end{equation}
  for all sufficiently small~$\epsilon$.
\end{lemma}

Our next lemma estimates the amount of time the process takes to exit the bigger boundary layer~$\mathcal B_5$ (the union of the light and dark blue regions in Figure~\ref{f:boundary-layer}).
\begin{lemma}\label{lem:upperBoundEscape}
  There exists a constant $C$ such that
  \begin{equation} \label{ine:expectedescapetoinner}
    \sup_{z\in \cH_1} \E^z \eta^\epsilon_5 \leq \frac{C \abs{\ln \delta}}{A}
  \end{equation}
  for all sufficiently small~$\epsilon$.
\end{lemma}

Finally, we estimate the time taken for the process to return to the boundary layer~$\mathcal B_1$ starting from the boundary of the bigger boundary layer~$\mathcal B_5$.
Since trajectories may travel further inward this step is slower in comparison and takes $O(\abs{\ln \delta} / \sqrt{A})$.
\begin{lemma}\label{lem:upperBoundReturn}
  There exists a constant $C$ such that there exists an $\epsilon_0$, where
  \begin{equation} \label{ine:expectedbacktobdlayer}
    \sup_{z\in \partial \mathcal B_5}  \E^z \eta^\epsilon_1 \leq C \frac{\abs{\ln \delta}}{\sqrt{A}}
  \end{equation}
  for all $\epsilon <\epsilon_0$.
\end{lemma}

Momentarily postponing the proofs of Lemmas~\ref{lem:lowerBoundExit}--\ref{lem:upperBoundReturn}, we prove Lemma~\ref{l:expectedExitBoundary}.
\begin{proof}[Proof of Lemma~\ref{l:expectedExitBoundary}]
  In this proof, the constant $C$ may vary from line to line but does not depend on $\epsilon$.
  We first define two sequences of barrier stopping times,
  \begin{align*}
     & \sigma'_0 = 0 \,,                                                                                        &  & \tilde \sigma_0 = \inf\set[\big]{ t \geq \sigma'_0 \st Z^\epsilon_t \in \partial \mathcal B_5   } \,, \\
     & \sigma'_n = \inf\set[\big]{ t \geq \tilde \sigma_{n-1} \st Z^\epsilon_t \in \partial \mathcal B_1  } \,, &  & \tilde \sigma_n = \inf\set[\big]{ t\geq \sigma'_n \st Z^\epsilon_t \in \partial \mathcal B_5  } \,.
  \end{align*}
 We have 
  \begin{align}
    \nonumber
    \E^z \tau^\epsilon
     & = \int_0^\infty \P^z \paren[\big]{ \tau^\epsilon \geq t  } \, dt
    \\
    \nonumber
     & = \E^z\sum_{n=1}^\infty \int_{\sigma'_{n-1}}^{\sigma'_n} \one_{\set{\tau^\epsilon \geq t}} \, dt
    \leq \sum_{n=1}^\infty \E^z
    \one_{\set{\tau^\epsilon \geq \sigma'_{n-1}}}
    (\sigma'_n - \sigma'_{n-1})
    \\
    \nonumber
     & = \sum_{n=1}^\infty \E^z
    \one_{\set{\tau^\epsilon \geq \sigma'_{n-1}}}
    \E^{Z^\epsilon(\sigma'_{n-1})}
    \sigma'_1
    \\
    \label{e:Etau1}
     & \leq \sum_{n=1}^\infty \P^z
    \paren{\tau^\epsilon \geq \sigma'_{n-1}}
    \sup_{z' \in \partial \mathcal B_1} \E^{z'} \sigma'_1\,.
  \end{align}
  We will now estimate each term on the right.

  First, by the strong Markov property and Lemmas~\ref{lem:upperBoundEscape}--\ref{lem:upperBoundReturn} we have
  \begin{equation}\label{ine:boundsigma1}
    \E^z \sigma'_1
    = \E^z \paren[\big]{
      \tilde \sigma_0 + \E^{Z^\epsilon(\tilde \sigma_0)} \eta^\epsilon_1 }
    \leq \E^z \paren[\Big]{
      \eta^\epsilon_5 +
      \sup_{z' \in \partial \mathcal B_5} \E^{z'} \eta^\epsilon_1 }
    \leq \frac{C \abs{\ln \delta}}{\sqrt{A}} \,.
  \end{equation}
  for every~$z \in \partial \mathcal B_1$.
  To estimate $\P^z( \tau^\epsilon \geq \sigma'_n )$, we use Lemma~\ref{lem:lowerBoundExit} and the fact that $\sigma'_1 \geq \tilde \sigma_0 = \eta^\epsilon_5$ to obtain
  \begin{equation*}
    \sup_{z \in \partial \mathcal B_1} \P^z( \tau^\epsilon \geq \sigma'_1 )
    \leq \sup_{z \in \partial \mathcal B_1} \P^z( \tau^\epsilon \geq \eta^\epsilon_5 )
    = 1 - \inf_{z \in \partial \mathcal B_1} \P^z( \tau^\epsilon <\eta^\epsilon_5 )
    \leq 1 - \frac{C \epsilon}{\paren{\ln \delta}^{12}}\,.
  \end{equation*}
  Now, by the strong Markov property,
  \begin{align*}
    \sup_{z\in \cH_1}\P^z \paren[\big]{ \tau^\epsilon \geq \sigma'_n  }
     & = \sup_{z\in \cH_1}
    \E^z
    \paren[\big]{
      \one_{\set{\tau^\epsilon \geq \sigma'_{n-1}}}
      \E^{Z^\epsilon(\sigma'_{n-1})}
      \one_{\set{\tau^\epsilon \geq \sigma'_1 }}
    }
    \\
     & \leq \sup_{z\in \cH_1}
    \E^z
    \one_{\set{\tau^\epsilon \geq \sigma'_{n-1}}}
    \sup_{z' \in \partial \mathcal B_1}
    \P^{z'}
    \paren{\tau^\epsilon \geq \sigma'_1 }
    \\
     & \leq
    \paren[\Big]{ 1 - \frac{C \epsilon}{\paren{\ln \delta}^{12}} }
    \E^z
    \one_{\set{\tau^\epsilon \geq \sigma'_{n-1}}}\,.
  \end{align*}
  Hence by induction
  \begin{equation}\label{e:tauGeqSigmaN}
    \sup_{z\in \cH_1}\P^z \paren[\big]{ \tau^\epsilon \geq \sigma'_n  }
    \leq \paren[\Big]{ 1 - \frac{C \epsilon}{\abs{\ln \delta}^{12}} }^n\,,
  \end{equation}
  for all $n \in \N$.

  Using~\eqref{ine:boundsigma1} and~\eqref{e:tauGeqSigmaN} in~\eqref{e:Etau1} yields
  \begin{equation*}
    \E^z \tau^\epsilon
    \leq \frac{C \abs{\ln \delta}}{\sqrt{A}}
    \sum_{n=0}^\infty
    \paren[\Big]{1 - \frac{C \epsilon}{\abs{\ln \delta}^{12}} }^n
  \end{equation*}
  finishing the proof.
\end{proof}

\subsection{Proof of Lemma~\ref{lem:lowerBoundExit}}
In this subsection, we will give the proof of Lemma~\ref{lem:lowerBoundExit}.
Let the coordinate processes of~$Z$ be $Z_1$ and $Z_2$ respectively (i.e.\ $Z = (Z_1, Z_2)$).
Define $\gamma_t$ to be the deterministic curve satisfying the ODE
\begin{equation}\label{e:gamma}
  \partial_t \gamma_t  = A v(\gamma_t) \,.
\end{equation}
We again need a few results to prove Lemma~\ref{lem:lowerBoundExit}.

By symmetry and the reflection principle, when $Z$ wanders into the lower half of the domain $(0,2)\times (-1,0)$, its behavior is mirrored by $-Z$,
 which is again on the upper half of the domain $(0,2) \times (0,1)$. 
Hence, without loss of generality, we may restrict our attention to the upper half of the domain and all the lemmas below are stated in this context.

The first result we state is a ``tube lemma'' estimating the probability that the process $Z$ stays within a small tube around the deterministic trajectories.
This is well studied and many such estimates can be found in the literature (see for instance~\cite{FreidlinWentzell12}).
The standard estimates, however, work well for times of order $1/A$.
Due to the degeneracy, and the hyperbolic saddles near cell corners, we need an estimate that works for time scales of order $\abs{\ln \delta}  / A$.
We state this estimate here.

\begin{lemma}\label{l:tubecorner}
  Let $z_0\in (0,2)\times (0,1) \cap \paren[\big]{Q_0/2+ (j, k)}$ where $(j,k) \in \set{0,1,2}\times\set{0,1}$ and~$Q_0$ is as in~\eqref{e:Q0def}.
  Let $\gamma$ satisfy~\eqref{e:gamma} with $\gamma_0 = z_0$, and define
  \begin{equation}\label{e:Ttubecorner}
    T \defeq \inf\set{t>0 \st \abs{\gamma_{2,t}- 1} \leq \delta \text{ or } \abs{\gamma_{1,t}-1} = c_0 \text{ or } \abs{\gamma_{2,t}-1} = c_0}\,.
  \end{equation}
  Then there exists $\epsilon_0$ so that for every $\epsilon < \epsilon_0$,
  \begin{equation*}
    P^{z_0}\paren[\Big]{
      \sup_{0 \leq t \leq T}
  \abs{Z_{i,t} -  \gamma_{i,t}}  \leq  \frac{\sigma_{ii}}{\sqrt{\abs{\ln \delta}A}} \,, \forall i \in \set{1,2}}  \geq \frac{C}{\abs{\ln\delta}^2}  \,.
  \end{equation*}
  Here we recall that $\sigma_{11} = 1$ and $\sigma_{22} = \epsilon$ are the diagonal entries in the matrix~$\sigma$ in~\eqref{eq:degenerateProcess}.
\end{lemma}
\begin{remark} \label{Tcorner}
  By a direct calculation, we can check that $T \leq \abs{\ln\delta}/A$.
\end{remark}
The proof of Lemma~\ref{l:tubecorner} uses the Girsanov theorem and is greatly simplified by the fact that $H$ is exactly quadratic near cell corners.
Since it is similar to the standard proofs, we present it in Appendix~\ref{s:tubelemmas}.

Once Lemma~\ref{l:tubecorner} is established it quickly gives an estimate on the probability of getting within a distance of $O(1/\sqrt{A})$  away from cell boundaries.
\begin{lemma} \label{cor:resetBound}
  Let $z_0 \in  \cH_1 \cap (0,2)\times (0,1)$.
  There exist constants $C, M>0$ such that for small enough $\epsilon$,
  \begin{equation} \label{resetBound}
    \P^{z_0}\paren[\big]{ \lambda_0 < \eta^\epsilon_{4M}  }
    \geq \frac{C}{\abs{\ln \delta}^2} \,.
  \end{equation}
  Here, $\lambda_0 \defeq \inf\set[\big]{ t>0 \st Z_t \in \set{  \dist(z, \partial \Omega') \leq M/\sqrt{A}}} $.
\end{lemma}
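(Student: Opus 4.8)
The plan is to deduce the bound from Lemma~\ref{l:tubecorner} by following the deterministic trajectory $\gamma_t$ starting at $z_0 \in \cH_1$. Since $z_0$ lies in the boundary layer $\cH_1 = \set{\abs{H} < 1/\sqrt A}$, the level set $\set{H = H(z_0)}$ is an $O(1/\sqrt A)$-perturbation of the separatrix $\set{H=0} = (\R \times \Z) \cup (\Z \times \R)$; in particular the deterministic orbit through $z_0$ travels along this level curve, passing near all four corners of the cell it borders, and at its closest approach to the horizontal boundary $\partial_D S = \R \times \Z$ it comes within a distance $O(1/\sqrt A)$ of it. This is precisely where I want to ``catch'' the process: I will choose $M$ large enough that this closest-approach point lies inside $\set{\dist(z, \partial\Omega) \leq M/\sqrt A}$, and then argue that with probability $\geq C/\abs{\ln\delta}^2$ the stochastic process $Z$ shadows $\gamma$ closely enough to be dragged into that region before it can escape to $\partial\cB_{4M}$.

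The key steps, in order, are as follows. First, I would set up the deterministic picture: starting from $z_0 \in Q_0/2 + (j,k)$ (or reduce to this case by first running the flow for an $O(1/A)$ time, which only costs a constant factor in probability via a standard short-time tube estimate), show by the quadratic-corner normalization in Assumption~\ref{A3} that the orbit segment joining $z_0$ to the point of closest approach to $\partial\Omega$ has the form covered by Lemma~\ref{l:tubecorner}, with the exit time $T \leq \abs{\ln\delta}/A$ by Remark~\ref{Tcorner}. Second, apply Lemma~\ref{l:tubecorner} on this segment: with probability $\geq C/\abs{\ln\delta}^2$, for all $t \leq T$ we have $\abs{Z_{i,t} - \gamma_{i,t}} \leq \sigma_i / \sqrt{\abs{\ln\delta}A} = o(1/\sqrt A)$, which in particular keeps $Z$ inside the wider layer $\cH_{4M}$ (so $\eta_{4M}$ has not yet occurred — this uses that $H$ is $C^2$ with bounded derivatives, so an $o(1/\sqrt A)$ spatial deviation changes $\abs{H}$ by $o(1/\sqrt A)$) and forces $Z_T$ to lie within $O(1/\sqrt A)$ of $\gamma_T$, hence within $M/\sqrt A$ of $\partial\Omega$ once $M$ is chosen appropriately. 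Third, on this event $\lambda_0 \leq T < \eta_{4M}$, giving~\eqref{resetBound}. If a single application of Lemma~\ref{l:tubecorner} does not get all the way from an arbitrary $z_0 \in \cH_1$ to the closest-approach point — because the orbit may pass through more than one corner neighborhood — I would instead chain a bounded number (at most four, one per corner of the cell) of applications, using the strong Markov property at the intermediate crossing times, so the probability lower bound becomes $(C/\abs{\ln\delta}^2)^{O(1)}$; the exponent of $\abs{\ln\delta}$ in the statement ($12$, and ultimately $13$ in Lemma~\ref{lem:lowerBoundExit}) is generous enough to absorb such a constant power.

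The main obstacle I anticipate is bookkeeping the geometry near the corners carefully enough to verify that the tube around $\gamma$ genuinely stays inside $\cH_{4M}$ for the \emph{entire} time $[0,T]$ — the orbit slows to a logarithmic crawl near each hyperbolic saddle, and one must check that the accumulated transverse diffusion over this long time (controlled by the $1/\sqrt{\abs{\ln\delta}A}$ tube width from Lemma~\ref{l:tubecorner}, not the naive $\sqrt{T} = \sqrt{\abs{\ln\delta}/A}$) does not push $\abs{H(Z_t)}$ past $4M/\sqrt A$. This is exactly the scenario Lemma~\ref{l:tubecorner} was engineered for, so the work is in correctly invoking it rather than in new estimates; the other delicate point is choosing $M$ uniformly in $z_0 \in \cH_1$, which follows from the fact that the closest approach of the level curve $\set{H = h}$ to $\partial\Omega$ is $O(\abs h) = O(1/\sqrt A)$, uniformly for $\abs h \leq 1/\sqrt A$, by Assumption~\ref{A4}.
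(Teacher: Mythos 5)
Your overall technique --- shadowing the deterministic orbit via Lemma~\ref{l:tubecorner} --- is the same as the paper's, but there is a genuine gap caused by a misreading of the target set, and it costs you the stated exponent. In Lemma~\ref{cor:resetBound} the set $\set{\dist(z,\partial\Omega)\le M/\sqrt A}$ is a neighborhood of the boundary of the \emph{cell} $\Omega$, i.e.\ of the full separatrix (horizontal \emph{and} vertical segments), not of the strip's Dirichlet boundary $\R\times\Z$ as you assume. This matters in two ways. First, for $z_0\in\cH_1$ lying \emph{outside} the corner boxes $Q_0/2+(j,k)$, the non-degeneracy of $\grad H$ away from the saddles already gives $\dist(z_0,\partial\Omega)\le M/\sqrt A$, so $\lambda_0=0$ and the probability is $1$; no tube lemma is needed there. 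Second, for $z_0$ inside a corner box, a \emph{single} application of Lemma~\ref{l:tubecorner} suffices: run $\gamma$ only until the stopping time $T$ of \eqref{e:Ttubecorner}, at which the orbit leaves the corner box through $\abs{\gamma_{1,T}}=c_0$ or $\abs{\gamma_{2,T}}=c_0$ (it cannot first reach $\abs{\gamma_{2,t}}\le\delta$ when $\dist(z_0,\partial\Omega)>1/\sqrt A$); since $H=x_1x_2$ is conserved along $\gamma$ and $\abs{H(z_0)}<1/\sqrt A$, the other coordinate at time $T$ is at most $1/(c_0\sqrt A)$, so $\gamma_T$ is already within $M/\sqrt A$ of $\partial\Omega$. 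One corner passage, one factor of $C/\abs{\ln\delta}^2$ --- exactly the bound claimed.

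Your version instead follows the orbit to its closest approach to the horizontal boundary, possibly threading several saddles, and you concede the resulting bound is only $(C/\abs{\ln\delta}^2)^{O(1)}$. That does not prove \eqref{resetBound} as stated, and the deficit is not harmless bookkeeping: the exponent $12$ in Lemma~\ref{lem:lowerBoundExit} is assembled as $2+8+2$ from this lemma, Lemma~\ref{lem:side-est}, and Lemma~\ref{lem:corner-est}, so replacing $2$ by $8$ propagates to a weaker power of $\abs{\ln\Pe}$ in the main theorem. The fix is to stop at the first exit from the corner box rather than pushing on toward the strip boundary; the remaining journey to the Dirichlet boundary is the business of Lemmas~\ref{lem:side-est}--\ref{lem:top-est}, not of this one.
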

\begin{proof}
  Note first that by Taylor expansion of $H$, for small $\epsilon$ there exists $M > 0$ such that $\dist(z_0, \partial \Omega') \leq  M/\sqrt{A}$ for all $z_0$ outside the corners $Q_0/2 + (j,k)$, 
  where $(j,k) \in \set{0,1,2} \times \set{0, 1}$.
  So now, we assume $z_0 \in Q_0/2 + (j,k)$ for some 
  $(j,k) \in \set{0,1,2} \times \set{ 0, 1}$.
  For brevity, we only present the proof when $z_0 \in Q_0/2$,  as the other cases are identical.

  If $\dist(z_0, \partial \Omega') \leq 1/\sqrt{A}$ we are done, so we now suppose $z_0 \in Q_0/2$ with $\dist(z_0, \partial \Omega') > 1/\sqrt{A}$.
  Let $\gamma$ be the deterministic trajectory defined by~\eqref{e:gamma} with $\gamma_0 = z_0$, and let $T$ be as in~\eqref{e:Ttubecorner}.
  Note that since $\dist(z_0, \partial \Omega') > 1/\sqrt{A}$ we can not have $\abs{\gamma_{2,T}-1} \leq \delta$.
  Thus, either $\abs{\gamma_{1,T} - 1} = c_0$ or $\abs{\gamma_{2,T} - 1} = c_0$.
  In either case there exists a constant $M$ such that $\abs{\gamma_{2,T}- 1} \leq M/\sqrt{A}$ or $\abs{\gamma_{1,T} - 1}\leq M/\sqrt{A}$, respectively.
  Now using Lemma~\ref{l:tubecorner} we obtain~\eqref{resetBound} as desired.
\end{proof}
\begin{remark}
  For notational convenience, we assume that $M=1$ for the rest of the paper.
\end{remark}

Another consequence of Lemma~\ref{l:tubecorner} is a lower bound on the probability of reaching $O(\delta)$ away from the top boundary before re-entering the cell interior.
\begin{lemma} \label{lem:cornerest}
  Let $Q^\delta_{\text{top}} = (1-2c_0,1+2c_0)\times (1-4\delta,1)$ be a box of height $4\delta$ at the top of the cell corner.
  Let $\lambda \defeq \inf \set{t\geq 0 \st Z_t \in Q^\delta_{\text{top}}}$. Then, there exists a constant $C>0$ such that
  \begin{equation} \label{ine:estimate1}
    \inf_{z_0\in (1-\delta,1+\delta) \times (1-c_0,1)}
    \P^{z_0} \paren[\big]{  \lambda < \eta^\epsilon_4  }  \geq \frac{C}{(\ln \delta)^2} \,.
  \end{equation}
\end{lemma}
\begin{proof}
  Let $T = \inf\set[\big]{t>0 \st \abs{\gamma_{2,t} -1} \leq \delta}$ the time the deterministic process hits the top boundary layer with width $\delta$.
  By Lemma~\ref{l:tubecorner},
  there exists a constant $C>0$ so that
  \begin{equation*}
    P^{z_0}\paren[\Big]{
    \sup_{0 \leq t \leq T} \abs{Z_{i,t} -  \gamma_{i,t}}  \leq  \frac{\sigma_{ii}}{\sqrt{\abs{\ln \delta}A}} \,, \forall i \in \set{1,2} }
    \geq \frac{C}{(\ln\delta)^2}  \,.
  \end{equation*}
  As $z_0 \in (1-\delta,1+\delta) \times (1-c_0,1)$, $\gamma_{1,T} \in (1-c_0, 1+c_0)$. Therefore,
  \begin{equation*}
    \set[\Big]{
      \sup_{0 \leq t \leq T}
  \abs{Z_{i,t} -  \gamma_{i,t}}  \leq  \frac{\sigma_{ii}}{\sqrt{\abs{\ln \delta}A}} \,, \forall i \in \set{1,2} }
    \subseteq \set[\big]{ \eta^\epsilon_4 > \lambda  } \,,
  \end{equation*}
  from which~\eqref{ine:estimate1} follows.
\end{proof}

Next, we bound the probability of exiting from the top when trajectories start in~$Q^\delta_{\text{top}}$.

\begin{lemma} \label{lem:topest}
  There exists a constant $p_0>0$ such that
  \begin{equation} \label{ine:topest}
    \inf_{z_0 \in Q^\delta_{\text{top}}}\P^{z_0} \paren[\big]{ \tau^\epsilon < \eta^\epsilon_4  } \geq p_0 \,.
  \end{equation}
\end{lemma}
\begin{proof}
  Let $\tilde T = 1/A$.
  When $A$ is sufficiently large, we note that given $X_0 = z_0 \in Q^\delta_{\text{top}}$, there exists $n \geq 1$, independent of $\epsilon$, such that
  the deterministic flow $\gamma_t$ starting at $z_0$ still remains in the top edge of the boundary layer $\set{\abs{H} \leq n\delta} \cap (0,2)\times (1-n \delta, 1)$ for time $\tilde T$.
  Define $\tilde \gamma_t$ by
  \begin{equation*}
    \partial_t\tilde \gamma_t = A u(\tilde \gamma_t) \,,
  \end{equation*}
  where $u=(u_1,u_2)$ is chosen to satisfy the following condition
  $\tilde \gamma_t  = (\gamma_{1,t}, \tilde \gamma_{2,t})$, where $\gamma_{1,t}$ is the first coordinate of $\gamma$, and $\tilde \gamma_{2,t}$ is some continuous function such that
  \begin{equation*}
    \tilde \gamma_{2,0} = \gamma_{2,0}  \,,  \quad
    \abs{v_2 - u_2} \leq 2n\delta
    \quad \text{ and } \quad
    \tilde \gamma_{2,\tilde T} \geq n\delta \,.
  \end{equation*}
  An example of such $\tilde\gamma$ is $\tilde\gamma_t = ( \gamma_{1,t} , \gamma_{2,t} + 2An\delta t)$.
  By continuity of $Z$, we have
  \begin{equation*}
    E_3 \defeq \set[\Big]{ \sup_{0\leq t \leq \tilde T} \abs{ Z_{2,t} - \tilde \gamma_{2,t}} \leq \delta  }
    \subset
    \set[\big]{  \tau^\epsilon < \eta^\epsilon_4} \,.
  \end{equation*}
  Now a standard large deviation estimate will show that $\P^{z_0}(E_3) \geq p_\epsilon$, for some constant $C_\epsilon$ that vanishes as $\epsilon \to 0$.
  In order to prove Lemma~\ref{lem:topest}, we need to remove this $\epsilon$ dependence.
  We do this here using the fact that in this box $\abs{\partial_1 v_2} \leq O(\epsilon)$, and $\abs{v_2 - u_2} \leq O(\delta)$.
  We claim that if we go through the standard large deviation estimate with these additional assumptions, the constant $p_\epsilon$ can be made independent of~$\epsilon$.
  Since the details are not too different from the standard proof, we carry them out in Lemma~\ref{l:smallDev} in Appendix~\ref{s:tubelemmas}, below.
  Hence, we see that there exists a constant $p_0$ (independent of $z_0, \epsilon$) so that
  \begin{equation*}
    \P^{z_0}(E_3) \geq p_0 \,,
  \end{equation*}
  proving~\eqref{ine:topest}.
\end{proof}

\begin{lemma} \label{lem:sideest}
  Let  $\tilde \lambda \defeq \inf \set[\big]{ t\geq 0 \st Z_t \in (1-\delta ,1+\delta )\times (1-c_0,1)}$. There exists a constant $C>0$ such that
  \begin{equation} \label{ine:sidelowerbound}
    \inf_{z_0 \in \set{\dist\paren{z,\partial\Omega'} \leq 1/\sqrt{A}}}\P^{z_0}\paren[\big]{ \tilde\lambda < \eta^\epsilon_4 } \geq  \frac{C \epsilon}{\paren{\ln \delta}^{8}} \,.
  \end{equation}
\end{lemma}
\begin{proof}
  We give the proof where $z_0 \in \set{\dist\paren{z,\partial\Omega'} \leq 1/\sqrt{A}} \cap (0,1)\times (0,1)$. 
  The analysis is similar for $z_0 \in \set{\dist\paren{z,\partial\Omega'} \leq 1/\sqrt{A}} \cap (1,2)\times (0,1)$.
  Define the regions~$\Box_1$, \dots, $\Box_5$ by
  \begin{gather*}
    \Box_1 \defeq
    \paren[\Big]{1-\frac{1}{\sqrt{A}}, 1+\frac{1}{\sqrt{A}}} \times \paren[\Big]{\frac{1}{\sqrt{A}}, 1- \frac{1}{\sqrt{A}} } \,, \\
    \Box_2 \defeq
    \paren[\Big]{ \frac{1}{\sqrt{A}}, 1} \times \paren[\Big]{0,  \frac{1}{\sqrt{A}}}  \,, \\
    \Box_3 \defeq
    \paren[\Big]{0 ,  \frac{1}{\sqrt{A}}} \times \paren[\Big]{0, 1-\frac{1}{\sqrt{A}}}  \,, \\
    \Box_4 \defeq
    \paren[\Big]{0, 1-\frac{1}{\sqrt{A}} }\times \paren[\Big]{1-\frac{1}{\sqrt{A}},1 }  \,, \\
    \Box_5 \defeq
    \paren[\Big]{1-\frac{1}{\sqrt{A}},0 }^2 \,,
  \end{gather*}
  as shown in Figure~\ref{f:boxes}.
  \begin{figure}[htb]
    \begin{center}
      \includegraphics[width=.6\linewidth]{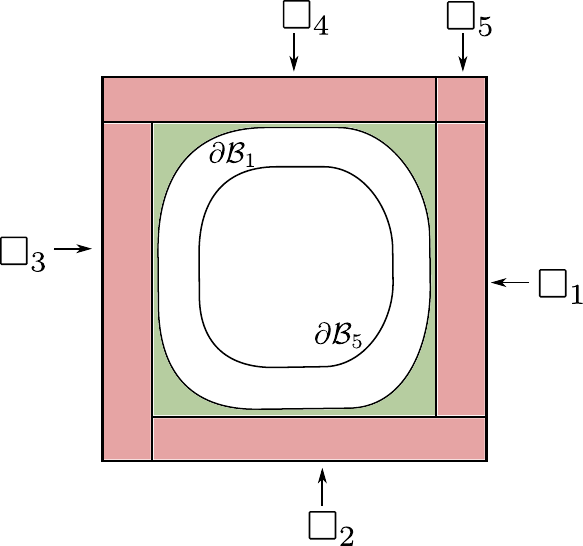}
      \caption{$\partial \cB_n$ and $\Box_i$.}
      \label{f:boxes}
    \end{center}
  \end{figure}
  If $\dist\paren{z_0,\partial\Omega'} \leq 1/\sqrt{A}$, then $z_0$ must be in one of the boxes~$\Box_1$, \dots, $\Box_5$.
  Suppose first $z_0 \in \Box_1$.
  Let $ \gamma(t)$ is the deterministic trajectory such that $\gamma_0 = z_0$,
  $T_0 \defeq \inf\set[\big]{t>0: \gamma_{2,t} = 1-c_0/2 }\leq m/A$ for some $m\geq 1$,
   and 
  \begin{equation*}
    E_4 \defeq
    \set[\Big]{  \sup_{0\leq t\leq T_0}\abs{Z_{1,t} -  \gamma_{1,t}} \leq \frac{2}{\sqrt{A}}   \,, 
    \sup_{0\leq t\leq T_0}\abs{Z_{2,t} - \gamma_{2,t}} \leq \frac{\epsilon}{\sqrt{A}} \,,
    \abs{Z_{1,T_0}} \leq \frac{\epsilon}{2\sqrt{A}} \,,
    } \,.
  \end{equation*}
  By continuity, we have that
  \begin{equation*}
    E_4
    \subset \set[\big]{ \tilde \lambda < \eta^\epsilon_4  } \,.
  \end{equation*}
  We claim
  \begin{equation} \label{ine:lowerboundary1}
    \P^{z_0}\paren[\big]{ \tilde \lambda < \eta^\epsilon_4  }
    \geq  \P^{z_0}(E_4)
    \geq C \epsilon\,,
  \end{equation}
  where $C>0$ independent of $z_0$.
  The proof of~\eqref{ine:lowerboundary1} is presented with the other tube lemmas we use in Appendix~\ref{s:tubelemmas}.
  We in fact prove a more general estimate (Lemma~\ref{l:STPC2} applied to the deterministic flow), from which~\eqref{ine:lowerboundary1} follows.

  Now, let $z_0 \in \Box_2$, define $\Box_{2R} = \Box_2 \cap [1-c_0,1]\times [0, 2/\sqrt{A}]$, and let $\lambda_1 = \inf\set[\big]{ t> 0 \st Z_t \in \Box_{2R}  }$.
  Proceeding as the case for $\Box_1$ with $\gamma(t)$ being the deterministic trajectory 
  so that $\gamma(0) = z_0$, $T_1 = \inf\set{t>0 \st \gamma_{1,t} =  c_0/2}$,
  we have
  \begin{equation} \label{ine:bottowBound1}
    \P^{z_0} \paren[\big]{ \lambda_1 < \eta^\epsilon_4  }
    \geq 
    \P^{z_0} \paren[\Big]{ \sup_{0\leq t\leq T_1}\abs{Z_{t} - \gamma_{t}}\leq \frac{1}{\sqrt{A}}  }
    \geq C  \,.
  \end{equation}
  To see why the last lower bound is true, we consider by It\^o formula,
  \begin{equation*}
    \sup_{0\leq t \leq T_1}\E^{z_0} \abs{Z_t - \gamma_t}^2 
    \leq 2A \norm{v}_{C^1} \int_0^{T_1}\E^{z_0} \sup_{0\leq t \leq T_1} \abs{Z_t - \gamma_t}^2
    + (\epsilon^2 + 1)T_1,
  \end{equation*}
  which, by Gronwall's inequality and Assumption~\ref{A1}, implies 
  \begin{equation*}
    \sup_{0\leq t \leq T_1}\E^{z_0} \abs{Z_t - \gamma_t}^2 
    \leq (1+ \epsilon^2)T_1 e^{ 200 T_1}\,.
  \end{equation*}
  Inequality~\eqref{ine:bottowBound1} follows by Chebychev's inequality.

  Now let $\lambda' =\inf\set[\big]{t\geq 0 \st Z_t \in  \Box_1}$.
  Using Lemmas~\ref{l:tubecorner}   and Markov property,  there exists a constant $C$ (independent of $z_0$) so that
  \begin{equation} \label{ine:lowerboundary21}
    \P^{z_0} \paren[\big]{ \lambda' < \eta^\epsilon_4  }
    \geq \P^{z_0} \paren[\big]{ \lambda_1 <\eta^\epsilon_4} \inf_{z_1 \in \Box_{2R}} \P^{z_1}\paren[\big]{ \lambda'<\eta^\epsilon_4  }
    \geq \frac{C}{(\ln\delta)^2} \,.
  \end{equation}
  Combining~\eqref{ine:lowerboundary1}, \eqref{ine:lowerboundary21} and using the Markov property gives
  \begin{equation*}
    \P^{z_0}\paren[\big]{ \tilde \lambda < \eta^\epsilon_4  }
    \geq \P^{z_0}\paren[\big]{\lambda' <\eta^\epsilon_4}
    \inf_{z_1 \in \, \Box_1 }\P^{z_1} \paren[\big]{  \tilde \lambda < \eta^\epsilon_4}
    \geq \frac{C\epsilon}{(\ln\delta)^2} \,.
  \end{equation*}
  Repeating this argument again for $\Box_3$, \dots, $\Box_5$ we see that we obtain an extra $C / \abs{\ln \delta}^2$ factor every time we pass a corner.
  Combining these estimates gives~\eqref{ine:sidelowerbound} as claimed.
\end{proof}

We are now ready to give the proof for Lemma~\ref{lem:lowerBoundExit}.

\begin{proof}[Proof of Lemma~\ref{lem:lowerBoundExit}]
  Let $z_0 \in \cH_1$ and
  denote $D_1 \defeq\set[\big]{\dist\paren{z,\partial\Omega'} \leq 1/\sqrt{A}}$,
  $D_2 \defeq (1-\delta, 1+\delta)\times (1-c_0,1)$ and $D_3 \defeq (1-2c_0,1+2c_0) \times (1-4\delta,1)$.
  As $\eta^\epsilon_4 < \eta^\epsilon_5$ when $z_0 \in \cH_1$, by Lemmas~\ref{cor:resetBound}--\ref{lem:sideest} and Markov property, we have that
  \begin{align*}
    \MoveEqLeft
    \P^{z_0} \paren{ \tau^\epsilon < \eta^\epsilon_5  }
    \geq \E^{z_0} \one_{\set{  \tau^\epsilon < \eta^\epsilon_5  }}
    \one_{\set{ \lambda < \eta^\epsilon_5  }}
    \one_{\set{  \lambda_0 < \eta^\epsilon_5}}
    \one_{\set{ \tilde \lambda < \eta^\epsilon_5}}
    \\
     & = \E^{z_0} \one_{\set{  \lambda_0 < \eta^\epsilon_5}}
    \E^{z_0} \paren[\Big]{  \one_{\set{  \tau^\epsilon < \eta^\epsilon_5  }}
      \one_{\set{  \tilde \lambda < \eta^\epsilon_5}}
      \one_{\set{ \lambda < \eta^\epsilon_5  }}
    \given \cF_{\lambda_0}  }                                  \\
     & = \E^{z_0} \one_{\set{  \lambda_0 < \eta^\epsilon_5}}
    \E^{Z_{\lambda_0}} \paren[\Big]{  \one_{\set{  \tau^\epsilon < \eta^\epsilon_5  }}
      \one_{\set{  \tilde \lambda < \eta^\epsilon_5}}
      \one_{\set{ \lambda < \eta^\epsilon_5  }}
    }                                                          \\
     & \geq \E^{z_0} \one_{\set{ \lambda_0 < \eta^\epsilon_5}}
    \inf_{z_1 \in D_1} \E^{z_1} \paren[\Big]{
      \one_{\set{ \lambda < \eta^\epsilon_5  }}
      \one_{\set{  \tilde \lambda < \eta^\epsilon_5}}
      \one_{\set{  \tau^\epsilon < \eta^\epsilon_5  }}
    }                                                          \\
     & \geq \E^{z_0} \one_{\set{ \lambda_0 < \eta^\epsilon_5}}
    \inf_{z_1 \in D_1}
    \E^{z_1} \one_{\set{ \tilde \lambda < \eta^\epsilon_5  }}
    \inf_{z_2 \in D_2}
    \E^{z_2}
    \one_{\set{  \lambda < \eta^\epsilon_5}}
    \inf_{z_3 \in D_3} \E^{z_3}\one_{\set{\tau^\epsilon < \eta^\epsilon_5}}
    \\
     & \geq  \frac{C \epsilon}{\abs{\ln \delta}^{12} }\,,
  \end{align*}
  where $C$ is independent of $z_0$. Taking the infimum over $z_0$, we achieve the desired result.
\end{proof}

\subsection{Proof of Lemma~\ref{lem:upperBoundEscape}}
In this subsection, we give a proof of Lemma~\ref{lem:upperBoundEscape}.
The strategy then will be similar to that of the proof of Lemma~\ref{lem:lowerBoundExit} as will will estimate the probability for a typical particle to successfully enter the inner region after each time it goes around the boundary layer $\cH_5$.
To do this, we first need a few results.
\begin{lemma}\label{lem:escapeInner}
  Let $\tilde \Box_1 = \cH_5 \cap \set{ x_2 \in [c_0, 1-c_0]}$.
  There exists a constant $p_0$ such that
  \begin{equation} \label{in:escapeInner}
    \inf_{z_0 \in \tilde \Box_1}\P^{z_0}\paren[\Big]{ \eta^\epsilon_5 < \frac{1}{A}  } \geq p_0 \,.
  \end{equation}
\end{lemma}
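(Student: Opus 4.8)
The plan is to show that starting from any point $z_0$ in the strip $\tilde\Box_1 = \cH_5 \cap \set{x_2 \in [-1+c_0, -c_0]}$ — i.e.\ the part of the width-$5\delta$ boundary layer that sits along a vertical cell edge, bounded away from the two corners — a single ballistic excursion along the separatrix carries the particle out to $\partial\cB_5$ in time $O(1/A)$ with probability bounded below independent of $\epsilon$. The key point is that on $\tilde\Box_1$ we are away from the hyperbolic saddles, so the deterministic flow $\gamma_t$ defined by $\partial_t\gamma_t = Av(\gamma_t)$ has speed $|Av(\gamma_t)| \gtrsim A/\sqrt{A} = \sqrt{A}$ (since $|H(z_0)| \le 5\delta = 5\epsilon/\sqrt A$ forces $z_0$ within $O(1/\sqrt A)$ of the edge, where $|\nabla H| = O(1/\sqrt A)$... wait — near the edge $v$ is tangent and of size $O(1/\sqrt{A})$, so $A|v| = O(\sqrt A)$); in any case $\gamma_t$ travels the $O(1)$ vertical distance needed to cross all of $\cB_5$'s interior in time $O(1/\sqrt A) \le 1/A$ once $A$ is large, while remaining in the region where Assumption~\ref{A5} holds so that $\partial_1 v_2 = 0$.

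First I would fix $z_0 \in \tilde\Box_1$ and let $\gamma$ solve~\eqref{e:gamma} with $\gamma_0 = z_0$; by the structure of $H$ along the edge (monotone in $x_2$, no critical points in $[-1+c_0,-c_0]$) one checks $T_0 \defeq \inf\set{t : \gamma_t \in \partial\cB_5} \le 1/A$ for small $\epsilon$, uniformly in $z_0$. Second, I would define the good event
\begin{equation*}
  E \defeq \set[\Big]{ \sup_{0 \le t \le T_0} \abs{Z_{i,t} - \gamma_{i,t}} \le \frac{\sigma_i}{\sqrt A} \ \forall i \in \set{1,2} }\,,
\end{equation*}
and observe by continuity that $E \subseteq \set{\eta^\epsilon_5 \le T_0} \subseteq \set{\eta^\epsilon_5 < 1/A}$, since staying within a tube of vertical width $\epsilon/\sqrt A$ and horizontal width $1/\sqrt A$ around a trajectory that ends on $\partial\cB_5$ keeps $Z$ inside $\cB_5$ and forces it to exit across $\partial\cB_5$ rather than re-enter. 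Third, I would bound $\P^{z_0}(E)$ from below by a constant independent of $\epsilon$ using the small-deviation estimate of Lemma~\ref{l:smallDev} (or Lemma~\ref{l:STPC2}) from Appendix~\ref{s:tubelemmas}: the hypotheses of that lemma hold here because on $\tilde\Box_1$ Assumption~\ref{A5} gives $\partial_1 v_2 = 0$ exactly, and the time horizon is only $O(1/A)$, which is precisely the regime where those estimates produce an $\epsilon$-independent lower bound (this is the same mechanism used in the proof of Lemma~\ref{lem:top-est}).

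The main obstacle is making the last step — the $\epsilon$-uniform lower bound on $\P^{z_0}(E)$ — genuinely clean: the naive Girsanov/large-deviation bound degrades like a power of $\epsilon$ because the vertical noise has amplitude $\epsilon$ and one is asking the process to track a trajectory whose vertical displacement is $O(1)$ over time $1/A$. The resolution, exactly as in Lemma~\ref{lem:top-est}, is that the \emph{relative} displacement one needs is only $\delta = \epsilon/\sqrt A$ in the rescaled vertical coordinate, the drift one must cancel has $\partial_1 v_2 = 0$ so no $x_1$-dependence enters the vertical equation, and $|v_2| = O(1/\sqrt A)$ on this strip; feeding these three facts into the standard proof removes the $\epsilon$-dependence, which is precisely what Lemma~\ref{l:smallDev} packages. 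I would therefore invoke that appendix lemma rather than reprove it, remarking only that the geometry of $\tilde\Box_1$ (away from corners, so the flow time is $O(1/A)$ not $O(|\ln\delta|/A)$) is even more favorable than in Lemma~\ref{lem:top-est}, which is why no logarithmic loss appears in~\eqref{in:escapeInner}.
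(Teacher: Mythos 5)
There is a genuine gap, and it is at the heart of your argument: the deterministic trajectory cannot exit the boundary layer. Since $v=\grad^\perp H$, the Hamiltonian is a first integral of $\partial_t\gamma_t = Av(\gamma_t)$, so $\abs{H(\gamma_t)}=\abs{H(z_0)}<5/\sqrt{A}$ for all $t$ and $\gamma$ never reaches $\partial\cB_5=\set{\abs{H}=5/\sqrt{A}}$; your stopping time $T_0=\inf\set{t:\gamma_t\in\partial\cB_5}$ is infinite, not $\leq 1/A$. (The flow is fast along the edge --- $\abs{v}=\abs{\grad H}=O(1)$ on $\tilde\Box_1$, not $O(1/\sqrt{A})$ as you write, since $\tilde\Box_1$ is bounded away from the critical points --- but it moves \emph{along} level sets, never across them.) Consequently the inclusion $E\subseteq\set{\eta^\epsilon_5\leq T_0}$ is vacuous, and even if you truncate $T_0$ at $1/A$, the tube event $E$ does not force an exit: staying within $1/\sqrt{A}$ horizontally of a curve lying on the level set $\set{H=H(z_0)}$ is entirely consistent with $Z$ remaining inside $\cB_5$ for the whole time interval. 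A "ballistic excursion along the separatrix" can never produce the event $\set{\eta^\epsilon_5<1/A}$; only the noise can.

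The correct mechanism --- and the one the paper uses --- is the full-strength \emph{horizontal} diffusion. On $\tilde\Box_1$ one has $\abs{\partial_1 H}=O(1)$, so $\cH_5\cap\set{x_2\in[-1+c_0,-c_0]}$ is a strip of horizontal width $O(1/\sqrt{A})$, and over time $1/A$ the horizontal Brownian fluctuation is of exactly that order. The paper fixes a target interval $R_\epsilon$ of length $1/\sqrt{A}$ at distance $1/\sqrt{A}$ from this strip and applies Lemma~\ref{l:STPC2} with the \emph{endpoint} constraint $Z_{1,T_0}-\gamma_{1,T_0}\in\tilde R$: conditioning the terminal horizontal position to land in $R_\epsilon$ places $Z_{T_0}$ outside $\cB_5$ and hence forces $\eta^\epsilon_5<1/A$, with probability bounded below independently of $\epsilon$ because the required displacement matches the diffusive scale. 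This endpoint feature of Lemma~\ref{l:STPC2} is precisely what your event $E$ lacks; you cite the right appendix lemma, but for the wrong reason (controlling deviations from a trajectory that supposedly exits) rather than the right one (steering the terminal horizontal position across the $O(1/\sqrt{A})$-wide layer). Your remarks about Assumption~\ref{A5} and the absence of a logarithmic loss are correct but do not repair the argument.
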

\begin{proof}
  Since we restrict our attention to  region of the boundary layer on the sides, for each $\epsilon > 0$ there exists an interval 
  $R_\epsilon$ with length $\abs{R_\epsilon} = 1/\sqrt{A}$ such that
  $$\dist\paren[\big]{R_\epsilon\times [c_0,1-c_0]  \,, 
  \cH_5 \cap \set{x_2 \in [c_0,1-c_0]}} = \frac{1}{\sqrt{A}}\,.$$
  Let $M$ be independent of $\epsilon$ such that 
  $$ R_\epsilon \times [c_0,1-c_0] \cup \paren[\big]{\cH_5 \cap  \set{x_2 \in [c_0,1-c_0]}} \subseteq \paren[\Big]{ 1-\frac{M}{\sqrt{A}}\,, 1+\frac{M}{\sqrt{A}}} \times [c_0,1-c_0] \,,$$ 
  and 
   $z_0 \in \tilde \Box_1$. 
  By Lemma~\ref{l:STPC2} 
  applied to the deterministic curve $\gamma$ (given by~\eqref{e:gamma}) with $\gamma_0 = z_0$, we have
  \begin{multline*}
    \P^{z_0}\paren[\Big]{ \eta^\epsilon_5 < \frac{1}{A}  }
    \\
    \geq  \P^{z_0} \paren[\Big]{ \sup_{0\leq t\leq 1/A} \abs{ Z_{1,t} - \gamma_{1,t}} \leq \frac{M}{\sqrt{A}} \,, 
    \sup_{0\leq t \leq 1/A} \abs{ Z_{2,t} - \gamma_{2,t}  }\leq \frac{\epsilon}{\sqrt{A}}\,, Z_{1,T_0} \in R_\epsilon  } \geq p_0 \,,
  \end{multline*}
  where $p_0$ is independent of $z_0$ as desired.
\end{proof}
\begin{lemma}\label{lem:lambda2est}
  Let $\tilde \lambda_2 = \inf\set[\big]{ t>0 \st Z_{2,t} \in \set{c_0, 1-c_0}   }$ and $z_0 \in \cB_{5}- \tilde \Box_1$.
  Then
  \begin{equation} \label{e:lambda2est}
    \lim_{\epsilon \to 0 } \inf_{\cH_5 - \tilde \Box_1}\P^{z_0}\paren[\Big]{
      \tilde \lambda_2 \leq \frac{5\abs{\ln\delta}}{A} } \geq 1 - \frac{C \ln A}{A^{1/4}} \,.
  \end{equation}
\end{lemma}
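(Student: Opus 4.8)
The statement asserts that, starting from anywhere in $\cH_5 \setminus \tilde\Box_1$ (i.e.\ in the boundary layer $\set{|H| < 5/\sqrt A}$ but near the top or bottom edge rather than the vertical sides), the process $Z^\epsilon$ will, with probability tending to $1$, reach the horizontal lines $\set{x_2 = -c_0}$ or $\set{x_2 = -1+c_0}$ within time $5|\ln\delta|/A$. The idea is that near the top edge of the cell, the $x_2$-component of $Av$ is $O(A/\sqrt A) = O(\sqrt A)$ in magnitude away from the corners but degenerates logarithmically as one approaches a corner; following the deterministic flow $\gamma$ from~\eqref{e:gamma}, a particle starting at height $|x_2| \sim h$ takes time $O(|\ln h|/A)$ to traverse the top edge and descend a vertical distance $O(1)$ into the cell interior, down to $x_2 = -c_0$. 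Since in $\cH_5$ we have $h \lesssim 1/\sqrt A$, and $|\ln(1/\sqrt A)| \lesssim |\ln\delta|$, the deterministic travel time is at most $\approx \tfrac12 |\ln\delta|/A \ll 5|\ln\delta|/A$, leaving room to absorb the stochastic fluctuations.

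The plan is: (1) Reduce to the top edge by symmetry; a particle in $\cH_5\setminus\tilde\Box_1$ is within $O(1/\sqrt A)$ of either $\partial_D S$ (top) or the Neumann boundary (bottom), and the two cases are symmetric, so assume $z_0$ is near the top. (2) Using Assumption~\ref{A4} (the sign condition on $\partial_i^2 H$ in the boundary layer, which controls the geometry of the separatrix and guarantees monotone transport along it) together with the explicit quadratic form~\eqref{e:Hquadratic} near the corners, give a \emph{deterministic} estimate: the trajectory $\gamma_t$ with $\gamma_0 = z_0$ satisfies $\gamma_{2,t} \in \set{-1+c_0,-c_0}$ for some $t = T_{\mathrm{det}} \le \tfrac{5}{2}|\ln\delta|/A$ when $\epsilon$ is small. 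The logarithm appears precisely because of the passage time through the quadratic-saddle region, exactly as in Remark~\ref{Tcorner}. (3) Promote this to the stochastic process: apply Lemma~\ref{l:STPC2} (the tube lemma used elsewhere, applied to this deterministic curve $\gamma$) to conclude that on the event $\set{\sup_{0\le t\le T_{\mathrm{det}}}|Z_{i,t}-\gamma_{i,t}| \le \sigma_i/\sqrt A}$, which has probability $\to 1$ as $\epsilon\to 0$ since the tube width is now macroscopic relative to the required accuracy (we only need $O(1/\sqrt A)$ accuracy in $x_2$, not the finer scales used in Lemma~\ref{l:tubecorner}), the process $Z$ also reaches $\set{x_2\in\set{-1+c_0,-c_0}}$ by time $T_{\mathrm{det}} \le 5|\ln\delta|/A$. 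Taking the infimum over $z_0 \in \cH_5\setminus\tilde\Box_1$ and using that the constants in Lemma~\ref{l:STPC2} are uniform over the starting point gives~\eqref{e:lambda2est}.

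The main obstacle is step (2): getting the deterministic travel time bounded by $C|\ln\delta|/A$ \emph{uniformly} over all starting points in the top portion of $\cH_5$, including those that start essentially on top of a corner. This requires a careful integration of $dx_2/dt = A v_2(\gamma_t)$ along the level set $\set{H = h}$ with $h = H(z_0)$, $|h| \lesssim 1/\sqrt A$: near the corner $v_2$ behaves like $H / x_1 \sim h/x_1$ while $|x_1|$ ranges over $(c\sqrt{|h|}, c_0)$ roughly, producing the $\int dx_1/x_1 \sim |\ln\sqrt h| \sim |\ln\delta|$ factor; away from the corner $v_2$ is bounded below by a constant so that part of the trip costs only $O(1/A)$. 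This is exactly the computation underlying Remark~\ref{Tcorner}, and the reason the lemma is stated only for the \emph{standard} Hamiltonian $H^0$ (the cutoff in $H^N$ would alter $v_2$ in the cell interior and break the clean level-set transport picture). The coefficient $5$ in~\eqref{e:lambda2est} is chosen with slack so that once the deterministic bound $\tfrac52|\ln\delta|/A$ is in hand, the tube event comfortably fits inside the time budget.
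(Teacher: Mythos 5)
Your steps (1)--(2) are fine, but step (3) --- the promotion of the deterministic estimate to the stochastic process --- contains a genuine gap that sinks the argument. You claim that the tube event $\{\sup_{0\le t\le T_{\mathrm{det}}}|Z_{i,t}-\gamma_{i,t}|\le \sigma_i/\sqrt A\}$ has probability tending to $1$ because ``the tube width is now macroscopic relative to the required accuracy.'' It does not. Even ignoring the drift mismatch, confining a Brownian motion to a window of half-width $L=1/\sqrt A$ for a time $T_{\mathrm{det}}\sim |\ln\delta|/A$ costs roughly $\exp(-cT_{\mathrm{det}}/L^2)=\exp(-c|\ln\delta|)=\delta^{c}$, which tends to \emph{zero}; staying in a narrow tube for a time much longer than $L^2$ is itself a rare event, which is exactly why Lemma~\ref{l:tubecorner} only yields a lower bound of order $1/|\ln\delta|^{2}$. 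Moreover Lemma~\ref{l:STPC2}, which you invoke, is stated only for $T=m/A$ with $m$ fixed and in any case delivers a constant lower bound, never a bound of the form $1-o(1)$. So no tube lemma in the paper (nor any Girsanov-type confinement estimate) can produce the conclusion~\eqref{e:lambda2est}, which requires the probability to converge to $1$.

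The paper's proof takes a different and essentially unavoidable route: it does not ask the process to track $\gamma$ along the whole trajectory, only to be within the \emph{macroscopic} distance $c_0/10$ of $\gamma_T$ at the single terminal time $T$, which already forces $Z_{2}$ to have crossed $\{-c_0\}$ or $\{-1+c_0\}$. This is done by a Gronwall estimate on $\E^{z_0}|Z_t-\gamma_t|^{2}$ followed by Chebyshev. The subtlety you miss is that the Gronwall factor is $e^{200At}$, so if $\gamma$ were started at $z_0$ itself (with $|H(z_0)|\lesssim 1/\sqrt A$, hence $T\sim\tfrac12\ln A/A$) the factor would be $A^{100}$ and Chebyshev would fail. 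The paper therefore starts $\gamma$ not at $z_0$ but at the nearest point $\tilde z_0$ of the higher level set $\{H=A^{-1/q}\}$ with $q$ large; the corner passage time then shrinks to $T\le D\ln A/(qA)$, the Gronwall factor becomes $A^{200D/q}$ with an adjustable exponent, and choosing $q$ so that $200D/q-1<-1/2$ gives failure probability $O(A^{-1/4}\ln A)\to 0$. This choice of surrogate starting point is also the real reason the lemma is restricted to $H^{0}$: the level set $\{H=A^{-1/q}\}$ lies well outside the region $\{H\le 2N/\sqrt A\}$ where $v^{N}$ agrees with $v^{0}$, so for the cut-off Hamiltonian the trajectory from $\tilde z_0$ would not move at all.
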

\begin{proof}
  Let $q \geq 2$ be some large number to be chosen later, and let $\tilde z_0$  be the closest point on $\set{ H = A^{-1/q}  }$ to $z_0$.
  Let $\tilde d = A \abs{z_0 - \tilde z_0}$
  and $\gamma_t$ be the deterministic curve (defined by~\eqref{e:gamma}) with~$\gamma_0 = \tilde z_0$.
  Note that, by Assumptions~\ref{A1}--\ref{A2},
  \begin{equation} \label{ine:projectionDistance}
      \frac{\tilde d}{A} \leq  \frac{C}{A^{1/2q}} \,.
  \end{equation}
  By It\^{o} formula, we have
  \begin{equation*}
      \E^{z_0}\abs{Z_t - \gamma_t}^2
  \leq \frac{\tilde d^2}{A^2}
    + 2 A \norm{v}_{C^1}
      \int_0^t \E^{z_0}\abs{Z_s - \gamma_s}^2 \, ds
      +(1 + \epsilon^2)t \,.
  \end{equation*}
  By Gronwall's inequality and Assumption~\ref{A1}, it follows that
  \begin{equation*}
    \begin{aligned}
      \E^{z_0}\abs{Z_t - \gamma_t}^2
      \leq \paren[\Big]{\frac{\tilde d^2}{A^2} + (1+\epsilon^2)t}e^{200 At}\,.
    \end{aligned}
  \end{equation*}
  Now, let $T_0 = \inf\set{ t >0: \gamma_{2,t} \in \paren{2c_0, 1-2c_0}}$, and note that $T_0 \leq D \ln A / (Aq)$ for some constant $D > 0$.
  By~\eqref{ine:projectionDistance}, we have
  \begin{equation*}
    \begin{aligned}
       & \P^{z_0}\paren[\Big]{ \abs{Z_{T_0} - \gamma_{T_0}} \geq \frac{c_0}{10}  }
      \leq
      \frac{100}{c_0^2}\paren[\Big]{\frac{C}{A^{2q}} + (1+\epsilon^2)\frac{D\ln A}{Aq}}e^{200D\ln A/q} \\
       & \leq
      C A^{200D/q-1}\ln A  \,.                    \\
    \end{aligned}
  \end{equation*}
  Picking $q$ such that $ 200D/q -1 < -1/2$, we have
  \begin{equation} \label{ine:lowerProjection}
    \P^{z_0}\paren[\Big]{ \abs{Z_{T_0} - \gamma_{T_0}} < \frac{c_0}{10}  }
    \geq
    1 - \frac{C \ln A}{A^{1/4}} 
    \,.
  \end{equation}
  As $q\geq 2$ , $T_0 < 5\abs{\ln\delta}/{A}$.
  Therefore,
  by continuity of $Z$, it follows that
  \begin{equation*}
    \set[\Big]{ Z_{2,T_0} \in [2 c_0, 1- 2 c_0] }  \subseteq
    \set[\Big]{ \tilde \lambda_2 \leq \frac{5 \abs{\ln\delta}}{A} }\,.
  \end{equation*}
  Combining this with~\eqref{ine:lowerProjection}, we deduce 
  \begin{equation*}
    \lim_{\epsilon\to 0}
    \inf_{\cH_5 - \tilde \Box_1}\P^{z_0}\paren[\Big]{ \tilde \lambda_2 \leq \frac{5 \abs{\ln\delta}}{A} } \geq 1 - \frac{C \ln A}{A^{1/4}} \, ,
  \end{equation*}
  as desired.
\end{proof}

We are now ready for the proof of Lemma~\ref{lem:upperBoundEscape}.
\begin{proof}[Proof of Lemma~\ref{lem:upperBoundEscape}]
  \restartsteps
  \step
  We first claim that for each $z_0 \in \cH_5$ and $\epsilon >0$,
  there exists a constant $C>0$, independent of $z_0$ and $\epsilon$, such that
  \begin{equation} \label{ine:lowerBoundBoundary}
    \P^{z_0} \paren[\Big]{ \sup_{0\leq t \leq 6\abs{\ln\delta}/A} \abs{H(Z_t)} > \frac{5}{\sqrt{A}} } \geq C \,.
  \end{equation}

  To prove this, suppose for contradiction there exists a sequence $\set{z_n, \epsilon_n}_{n=1}^\infty$ such that
  \begin{equation}\label{ine:lowerBoundBoundaryCont}
    \lim_{n\to\infty}
    \P^{z_n} \paren[\Big]{ \sup_{0\leq t \leq 6\abs{\ln\delta}/A} \abs{H(Z_t)} > \frac{5}{\sqrt{A}} } = 0 \,.
  \end{equation}
  Let $C_0$ be the lower bound in Lemma~\ref{lem:escapeInner} and
  denote $\tilde \lambda_1 = \inf\set[\big]{ t\geq 0 \st Z_t \in \tilde \Box_1 }$.
  By Lemma~\ref{lem:escapeInner} and the strong Markov property,
  \begin{equation*}
    \begin{aligned}
       & \P^{z_n}\paren[\Big]{ \sup_{0\leq t \leq 6\abs{\ln\delta}/A} \abs{H(Z_t)} > \frac{5}{\sqrt{A}} }                                                                                                  \\
       & \geq \E^{z_n}\paren[\Big]{
       \E^{z_n}\paren[\Big]{ \one_{\set[\big]{  \sup_{0\leq t \leq \tilde \lambda_1} \abs{H(Z_t)} \leq \frac{5}{\sqrt{A}}}} \one_{\set[\big]{ \tilde \lambda_1 \leq 5\abs{\ln \delta}/A  }}
      \one_{\set[\big]{\eta^\epsilon_{5} < \tilde \lambda_1+  1/A}}\st \cF_{\tilde \lambda_1} }              }                                                                                      \\
       & = \E^{z_n} \paren[\Big]{
      \one_{\set[\big]{   \sup_{0\leq t \leq \tilde \lambda_1} \abs{H(Z_t)} \leq \frac{5}{\sqrt{A}} } } \one_{\set[\big]{ \tilde \lambda_1 \leq 5\abs{\ln \delta}/A  }}
      \E^{Z_{\tilde \lambda_1}}
      \one_{\set[\big]{\eta^\epsilon_{5} < 1/A}}                             }                                                                                                                              \\
       & \geq \E^{z_n}\paren[\Big]{
      \one_{\set[\big]{ \sup_{0\leq t \leq \tilde \lambda_1} \abs{H(Z_t)} \leq \frac{5}{\sqrt{A}} }} \one_{\set[\big]{ \tilde \lambda_1 \leq 5\abs{\ln \delta}/A  }}}
      \inf_{z\in \tilde \Box_1}\E^{z}
      \one_{\set[\big]{\eta^\epsilon_{5} < 1/A}}                                                                                                                                                           \\
       & \geq C_0 \P^{z_n}\paren[\Big]{  \sup_{0\leq t \leq \tilde \lambda_1} \abs{H(Z_t)} \leq \frac{5}{\sqrt{A}} \,; \tilde \lambda_1 \leq \frac{5\abs{\ln \delta}}{A}  }  \,.
    \end{aligned}
  \end{equation*}
  The second equality follows from the fact that $\eta^\epsilon_5 > \tilde \lambda_1$ under the event 
  \begin{equation*}
  \set[\Big]{  \sup_{0\leq t \leq \tilde \lambda_1} \abs{H(Z_t)} \leq \frac{5}{\sqrt{A}}} \,.
  \end{equation*}

  We claim that for large enough $n$, we have
  \begin{equation*}
    \P^{z_n}\paren[\Big]{  \sup_{0\leq t \leq \tilde \lambda_1} \abs{H(Z_t)} \leq \frac{5}{\sqrt{A}} \,; \tilde \lambda_1 \leq \frac{5\abs{\ln \delta}}{A}  } \geq \frac{1}{2} \,,
  \end{equation*}
  which contradicts our assumption~\eqref{ine:lowerBoundBoundaryCont}.
  To see that this lower bound is true, we first note that  $z_i \not\in \tilde\Box_1$ by Lemma~\ref{lem:escapeInner}.
  Thus, we only consider the case $z_n \in \cH_5 - \tilde \Box_1$.

  Recall $\tilde \lambda_2 = \inf\set[\big]{ t>0 \st Z_{2,t} \in \set{c_0, 1-c_0}   }$.
  Observe that
  \begin{equation*}
    \begin{aligned}
       & \one_{\set{  \sup_{0\leq t \leq \tilde \lambda_1} \abs{H(Z_t)} \leq \frac{5}{\sqrt{A}}}} \one_{\set{ \tilde \lambda_1 \leq {5\abs{\ln \delta}}/{A} }} \\
       & =
      \one_{\set{ \sup_{0\leq t \leq \tilde \lambda_1} \abs{H(Z_t)} \leq \frac{5}{\sqrt{A}}}} \one_{\set{ \tilde \lambda_2 \leq {5\abs{\ln \delta}}/{A} }} \,.
    \end{aligned}
  \end{equation*}
  By~\eqref{e:lambda2est} and~\eqref{ine:lowerBoundBoundaryCont} and, we can pick $n$ large enough such that
  \begin{equation*}
    \begin{aligned}
       & \P^{z_n}\paren[\Big]{\sup_{0\leq t \leq \tilde \lambda_1} \abs{H(Z_t)} \leq \frac{5}{\sqrt{A}}
      \,;
      \tilde \lambda_1 \leq \frac{5\abs{\ln \delta}}{A} }                                                   \\
       & \geq
      \P^{z_n}\paren[\Big]{
      \sup_{0\leq t \leq 6\abs{\ln \delta}/A}  \abs{H(Z_t)} \leq \frac{5}{\sqrt{A}} \,;
      \tilde \lambda_2 \leq \frac{5\abs{\ln \delta}}{A} } \geq \frac{1}{2} \,.
    \end{aligned}
  \end{equation*}
  This is a contradiction, proving~\eqref{ine:lowerBoundBoundary} as desired.
  \step
  Once~\eqref{ine:lowerBoundBoundary} is established, we can estimate $\E \eta^\epsilon_5$ as the expected time to success of a Bernoulli trial using a similar argument as in the proof of Lemma~\ref{l:expectedExitBoundary}.
  Explicitly, let $\Delta t = 6\abs{\ln \delta}/A$, and observe that by~\eqref{ine:lowerBoundBoundary},
  \begin{equation*}
    \P^{z_0} \paren[\big]{ \eta^\epsilon_5 < \Delta t  }
    =
    \P^{z_0} \paren[\Big]{ \sup_{0\leq t \leq 6\abs{\ln\delta}/A} \abs{H(Z_t)} > \frac{5}{\sqrt{A}}} \geq C \,.
  \end{equation*}
  By the strong Markov property and estimate~\eqref{ine:lowerBoundBoundary},
  we have that for $i>1$,
  \begin{equation*}
    \begin{aligned}
       & \P^{z_0}\paren[\big]{ \eta^\epsilon_5 \geq i \Delta t  }
      = \E^{z_0} \E^{z_0} \paren[\big]{ \one_{\set{ \eta^\epsilon_5 \geq i \Delta t  }}
      \one_{\set{ \eta^\epsilon_5 \geq (i-1) \Delta t  }} \st \cF_{(i-1)\Delta t} }            \\
       & = \E^{z_0} \one_{\set{ \eta^\epsilon_5 \geq (i-1) \Delta t  }}
      \E^{Z_{(i-1)\Delta t}}  \one_{\set{ \eta^\epsilon_5 \geq  \Delta t  }}                   \\
       & \leq \E^{z_0} \one_{\set{ \eta^\epsilon_5 \geq (i-1) \Delta t  }}
      \sup_{z\in \cH_5 }\E^{z}  \one_{\set{ \eta^\epsilon_5 \geq  \Delta t  }}                 \\
       & = \E^{z_0} \one_{\set{ \eta^\epsilon_5 \geq (i-1) \Delta t  }}
      \paren[\big]{1- \inf_{z\in \cH_5 }\P^{z}  \paren[\big]{ \eta^\epsilon_5 <  \Delta t  } } \\
       & = \E^{z_0} \one_{\set{ \eta^\epsilon_5 \geq (i-1) \Delta t  }}
      \paren{1- C }
      \leq (1-C)^i \,,
    \end{aligned}
  \end{equation*}
  where $C$ is the constant in~\eqref{ine:lowerBoundBoundary}. Therefore,
  \begin{equation*}
    \begin{aligned}
      \E^{z_0} \eta^\epsilon_5 & = \int_0^\infty \P^{z_0}(\eta^\epsilon_5 \geq t) \, dt
      \leq  \sum_{i=1}^\infty\int_{(i-1)\Delta t}^{i\Delta t}
      \P^{z_0}\paren[\big]{\eta^\epsilon_5 \geq t} \, dt                                                                                                                                                    \\
                               & \leq \Delta t \sum_{i=0}^\infty \P^{z_0} \paren[\big]{\eta^\epsilon_5 \geq i\Delta t}  \leq \Delta t \sum_{i=0}^\infty (1- C)^i \leq \frac{ 6\abs{\ln\delta}}{(1-C)A}  \,,
    \end{aligned}
  \end{equation*}
  from which~\eqref{ine:expectedescapetoinner} follows immediately.
\end{proof}

\subsection{Proof of Lemma~\ref{lem:upperBoundReturn}}
In this subsection,
we restrict our attention to a particular cell $(0,1)\times(0,1)$ as the analysis is similar for $(1,2)\times (0,1)$.
Thus, assume for simplicity
that $\abs{H} = H$.
By Assumption~\ref{A3}, $\partial_i^2 H \leq 0$ for $i \in \set{1,2}$.
Let $z \in \overline \cB_1^c$ and denote $U_\epsilon(z) = \E^z \eta^\epsilon_1$.
Then, $U_\epsilon$ solves the following equation

\begin{equation} \label{eq:restrictedEq}
  \begin{dcases}
    - \partial_1^2 U_\epsilon - \epsilon^2 \partial_2^2 U_\epsilon + A v \cdot \grad U_\epsilon = 1 & \quad \text{ in } (0,1)^2 - {\cH}_1\,, \\
    U_\epsilon = 0                                                                                           & \quad \text{ on }  (0,1)^2 \cap \partial\cH_1\,.
  \end{dcases}
\end{equation}
In order to prove Lemma~\ref{lem:upperBoundReturn}, we construct an explicit supersolution  to~\eqref{eq:restrictedEq}, independent of $\epsilon$.
Recall by Lemma~\ref{lem:innerEst},
\begin{equation*}
  S \defeq \sup_{\epsilon >0} \norm{ U_\epsilon }_{L^\infty} < \infty \,.
\end{equation*}
Let $d_1 \ll 1$ be a small constant that will be chosen later, and define
\begin{gather*}
  \Lambda= \set[\Big]{ \frac{1}{\sqrt{A}} \leq |H| \leq d_1  }
  \\
  R_2 = \Lambda \cap \set{ y \in [c_0, 1-c_0]} \quad \text{ and } \quad
  R_1 = \Lambda - R_2 \,.
\end{gather*}

\begin{figure}[hbt]
  \begin{center}
    \includegraphics[width=.5\linewidth]{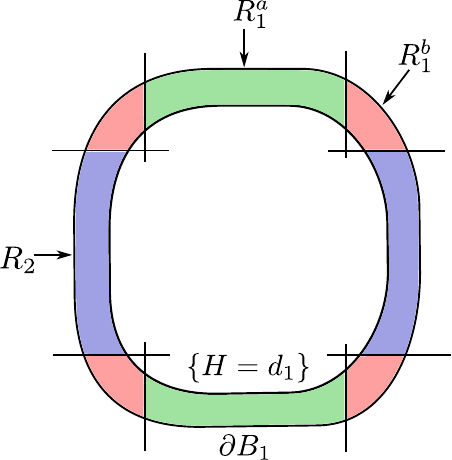}
    \caption{$\Lambda$, \textcolor{green!40!black}{$R_1^a$ (green)}, \textcolor{red}{$R_1^b$ (red)} and
      \textcolor{blue!70!black}{$R_2$ (blue)}.}
  \end{center}
\end{figure}

Denote by $(\theta, h)$ the curvilinear coordinate, where $\theta = \Theta(x_1, x_2)$ is the ``angle'' and $h= H(x_1,x_2)$ the level of the Hamiltonian $H$ (See Section~\ref{a:innerEst}).
Let
$f$ (to be specified later) be a smooth periodic function of $\Theta$  that satisfies
\begin{equation} \label{AssumptionOnf}
  \begin{gathered}
    0 < \inf f < \sup f < \infty \,, \\
    -\infty < \inf f'(\Theta) \leq \sup f'(\Theta) < -1 \quad \text{ on   } R_1 \,, \\
    \text{and} \quad \sup\abs{f''} <\infty  \,.
  \end{gathered}
\end{equation}

Then, consider the function
\begin{equation*}
  \phi =   \chi_1 + \chi_2 \,,
\end{equation*}
where
\begin{equation*}
  \chi_1 = - \frac{S}{d_1}  H \ln H \quad \text{ and } \quad
  \chi_2 =   - \frac{f(\Theta)}{A {H}}  + \frac{\norm{f}_{L^\infty}}{\sqrt{A}} \,.
\end{equation*}
By construction, $\phi(\Theta, H) \geq 0$ on $\Lambda$.
We claim that for an appropriate $f$, $\phi$ is a desired supersolution.

\begin{lemma} \label{lem:supersolution}
  Let $U_\epsilon$ be the solution to equation~\eqref{eq:restrictedEq}.
  Then, there exists a function $f$ that satisfies the requirement~\eqref{AssumptionOnf} so that for small enough $d_1$,
  \begin{equation*}
    \phi \geq U_\epsilon \quad \text{ on } \Lambda \,.
  \end{equation*}
\end{lemma}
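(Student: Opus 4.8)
The plan is to show that $\phi$ is a supersolution of~\eqref{eq:restrictedEq} on $\Lambda$ and then conclude by the maximum principle. Write $\cL := -\partial_1^2 - \epsilon^2\partial_2^2 + Av\cdot\grad$ for the operator in~\eqref{eq:restrictedEq}; for each fixed $\epsilon>0$ this is an elliptic operator with smooth coefficients and no zeroth order term. Since $\cL U_\epsilon = 1$ on $\overline{\cH}_1^c \supseteq \Lambda$, it is enough to verify that $\cL\phi \ge 1$ in $\Lambda$ and $\phi \ge U_\epsilon$ on $\partial\Lambda$; applying the comparison principle to $w:=\phi-U_\epsilon$ (which then satisfies $\cL w = \cL\phi - 1 \ge 0$ in $\Lambda$ and $w\ge 0$ on $\partial\Lambda$) gives $\phi\ge U_\epsilon$ on $\Lambda$. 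The boundary inequality is immediate: $\partial\Lambda = \partial\cH_1 \cup \set{\abs H = d_1}$; on $\partial\cH_1$ one has $U_\epsilon = 0 \le \phi$ since $\phi\ge 0$ on $\Lambda$ by construction, and on $\set{\abs H = d_1}$ one has $\chi_2\ge 0$ and $\chi_1 = S\abs{\ln d_1}\ge S \ge \norm{U_\epsilon}_\infty \ge U_\epsilon$ as soon as $d_1 \le 1/e$ (recall $S<\infty$ by Lemma~\ref{lem:innerEst}).

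For the interior inequality the key point is that $v = \grad^\perp H$, so $v\cdot\grad H = 0$; hence $Av\cdot\grad\chi_1 = 0$, and, writing $\omega := v\cdot\grad\Theta$ for the angular speed, $Av\cdot\grad\chi_2 = A\omega\,\partial_\Theta\chi_2 = -\omega f'(\Theta)/H$. By the choice of $(\Theta,H)$ (Appendix~\ref{a:innerEst}), $\omega$ is positive and comparable to $1/T(H)$, where $T(H)\asymp\abs{\ln H}$ is the period of $\dot\gamma = v(\gamma)$ on $\set{H=\cdot}$ (the logarithm coming from the hyperbolic saddles), so that $\omega\gtrsim 1/\abs{\ln H}$ on $\Lambda$. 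Set $E := \abs{\partial_1 H}^2 + \epsilon^2\abs{\partial_2 H}^2 \ge 0$. Expanding $-\partial_1^2\phi-\epsilon^2\partial_2^2\phi$ in the $(\Theta,H)$ coordinates and sorting the terms, $\cL\phi = G_1 + G_2 + D + R$, where
\begin{gather*}
  G_1 := \tfrac{S}{d_1}\Bigl[\tfrac{E}{H} + (1+\ln H)(\partial_1^2 H + \epsilon^2\partial_2^2 H)\Bigr], \qquad
  G_2 := \tfrac{2f(\Theta)}{A}\,\tfrac{E}{H^3} - \tfrac{f(\Theta)}{A}\,\tfrac{\partial_1^2 H + \epsilon^2\partial_2^2 H}{H^2}, \\
  D := -\tfrac{\omega f'(\Theta)}{H},
\end{gather*}
and $R$ collects all the remaining contributions, namely those involving $\partial_i\Theta$ or $\partial_i^2\Theta$. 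Here $G_1,G_2\ge 0$ on $\Lambda$: indeed $1+\ln H<0$ there (as $H\le d_1<1/e$) while $\partial_i^2 H<0$ there by Assumption~\ref{A4} (taking $d_1\le h_0$), so both bracketed products are nonnegative, and $f\ge\inf f>0$; and $D \ge \omega/H > 0$ on $R_1$ since $f'<-1$ there. The choice of $\chi_2$ with the tuned $H^{-1}$ singularity and the constant $\norm f_\infty/\sqrt A$ is exactly what makes the singular parts of $G_2$ positive while keeping $\phi\ge 0$.

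It remains to prove $G_1 + G_2 + D + R \ge 1$ in $\Lambda$, which is done region by region. Away from the saddle points --- on $R_2$ and on the flat portions of $R_1$ --- the coordinate $\Theta$ and the geometric quantities appearing in $R$ are smooth with controlled derivatives, so $\abs R \le C(\norm f_{C^2})$; and one of the nonnegative terms already exceeds $1 + \abs R$ (and, on $R_2$, also the possibly negative but bounded contribution of $D$) once $d_1$ is small: on $R_2$, where $\abs{\partial_1 H}$ is bounded below, via $G_1 \ge cS/(d_1 H) \ge cS/d_1^2$, and on the flat parts of $R_1$ via $D \ge \omega/H \gtrsim 1/(H\abs{\ln H}) \ge 1/(d_1\abs{\ln d_1})$. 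The delicate case --- and the main obstacle --- is a neighborhood of a saddle point, where the level set $\set{H = 1/\sqrt A}$ passes within $O(A^{-1/2})$ of the hyperbolic point, so that $\partial_i\Theta$ and $\partial_i^2\Theta$, and hence $\abs R$, blow up like powers of $A$. Here we use Assumptions~\ref{A2}--\ref{A3}: since $H = x_1 x_2$ exactly near the corner (so $\partial_i^2 H\equiv 0$ and $\abs{\grad H}^2 = x_1^2 + x_2^2$) and $\Theta$ has an explicit form there, a direct computation shows every term of $R$ is bounded by $C(\norm f_{C^2})\,\abs{\ln H}^{-1}\,G_2$ --- the $\abs{\ln H}^{-1}$ gain being precisely what the quadratic structure and the strict bound $f'<-1$ on $R_1$ buy us. Consequently, for $d_1$ small enough that $\abs{\ln H}\ge\abs{\ln d_1}\ge 2C(\norm f_{C^2})$ on $\Lambda$,
\begin{equation*}
  \cL\phi \ge G_2 + D + R \ge \tfrac12 G_2 + D \ge D \ge \frac{c}{2 d_1\abs{\ln d_1}} > 1,
\end{equation*}
and adding $G_1\ge 0$ completes the estimate. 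Choosing $f$ satisfying~\eqref{AssumptionOnf} as above (in particular with $\sup_{R_2} f'$ and $\norm f_{C^2}$ bounded by fixed constants, which is compatible with periodicity of $f$) and then taking $d_1$ small finishes the proof; the corner analysis, i.e.\ the explicit control of $R$ via $G_2$, is the technical heart.
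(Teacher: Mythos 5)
Your overall strategy is the same as the paper's: verify that $\phi$ is a strict supersolution of~\eqref{eq:restrictedEq} on $\Lambda$, check $\phi\ge U_\epsilon$ on $\partial\Lambda$, and conclude by the comparison principle. The sign arguments for $G_1,G_2$ via Assumption~\ref{A4}, the use of $A v\cdot\grad\phi$ to produce the main positive term $-f'(\Theta)\rho\abs{\grad H}^2/H$, and the region-by-region verification all mirror the paper, and your boundary check and the estimates on $R_2$ and on the part of $R_1$ away from the saddles are fine.

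The gap is exactly where you place the ``technical heart,'' namely the corner region, and it is twofold. First, the angular speed $\omega = v\cdot\grad\Theta = \rho\abs{\grad H}^2$ is \emph{not} bounded below by $c/\abs{\ln H}$ near a saddle: on the level set $\set{H=h}$ with $H=x_1x_2$, at the vertex $x_1=x_2=\sqrt{h}$ one has $\abs{\grad H}^2 = 2h$, so $\omega\sim h \ll 1/\abs{\ln h}$ (the period $T(h)\sim\abs{\ln h}$ reflects a harmonic average of $\omega$, not a pointwise lower bound). Hence your final chain $D\ge \omega/H \ge c/(d_1\abs{\ln d_1})>1$ fails in the corner; what survives there is only $D = -f'(\Theta)\rho(x_1^2+x_2^2)/(x_1x_2)\ge 2\lambda_1\inf_{R_1}\abs{f'}$ by AM--GM, a \emph{constant}, which under~\eqref{AssumptionOnf} alone ($\inf_{R_1}\abs{f'}>1$) need not exceed $1$: you must additionally choose $f$ with $\lambda_1\inf_{R_1}\abs{f'}>2$, as the paper does, and your stated choice of $f$ never imposes this. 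Second, the key estimate $\abs{R}\le C\abs{\ln H}^{-1}G_2$ is asserted without computation and does not appear to hold: for instance the term $\frac{f''(\Theta)}{AH}(\partial_1\Theta)^2$ compared to $G_2$ gives a ratio of order $\abs{f''}\rho^2x_1^2H^2/(f\,x_2^2)$, which for $x_2\to 0$, $x_1\sim 2c_0$ is a fixed constant, with no logarithmic gain. The paper closes the corner case differently: the terms involving second derivatives of $\Theta$ and $f''$ are bounded by an additive error $C/\sqrt{A}$ using $H\ge 1/\sqrt{A}$, and the dangerous cross term $\frac{2}{A}\abs{f'}\abs{\partial_i\Theta\,\partial_i H}/H^2$ is absorbed into $D$ itself via the factor $\rho - 2/(AH)\ge \rho - 2/\sqrt{A}$. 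You would need to replace your absorption of $R$ into $G_2$ by an argument of this kind for the proof to close.
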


Postponing the proof of this lemma, we now give the proof of Lemma~\ref{lem:upperBoundReturn}.

\begin{proof}[Proof of Lemma~\ref{lem:upperBoundReturn}]
  By construction,  on $\overline \cH_5 - \cH_1$ and for small enough $\epsilon$, we have
  $\frac{5}{\sqrt{A}} \leq d_1$. Therefore, when $H = 5/\sqrt{A}$,
  \begin{equation*}
    \phi \leq - \frac{S}{d_1}\frac{5}{\sqrt{A}}  {\ln \paren[\Big]{ \frac{5}{\sqrt{A}}  }}  + \frac{\norm{f}_{L^\infty}}{\sqrt{A}}
    \leq \frac{\abs{\ln\delta}}{\sqrt{A}}  \,.
  \end{equation*}
  It follows that
  \begin{equation*}
    \E^z\eta^\epsilon_1 =  U(z) \leq \phi(z) \leq \frac{\abs{\ln\delta}}{\sqrt{A}} \,,
  \end{equation*}
  for every $z \in \partial \cH_5$,
  as desired.
\end{proof}

\begin{proof}[Proof of Lemma~\ref{lem:supersolution}]
  \restartsteps
  \step
  Recall that $v = \grad^\perp H$ and $H \geq 1/\sqrt{A}$. We have that
  \begin{equation*}
    \grad \chi_2 = - \frac{f'(\Theta)}{A H} \grad \Theta +  \frac{f(\Theta)}{A H^2} \grad H \,,
  \end{equation*}
  \begin{align*}
    -\partial^2_1 \chi_2
     & =  \frac{1}{A} \paren[\Big]{ \frac{f''(\Theta)}{H} (\partial_1 \Theta)^2
    - 2\frac{f'(\Theta)}{H^2} \partial_1 \Theta \partial_1 H + \frac{f'(\Theta)}{H} \partial^2_1\Theta   }                                                                                                  \\
     & \quad  +\frac{1}{A} \paren[\Big]{  \frac{2 f(\Theta)}{H^3}  (\partial_1 H)^2 - \frac{f(\Theta)}{H^2} \partial^2_1 H }                                                                       \\
     & \geq \frac{1}{A } \paren[\Big]{ \frac{f''(\Theta)}{H} (\partial_1 \Theta)^2 - 2 \frac{f'(\Theta)}{H^2} \partial_1 \Theta \partial_1 H + \frac{f'(\Theta)}{H} \partial^2_1 \Theta      } \,,
  \end{align*}
  and
  \begin{equation*}
    -\partial_2^2 \chi_2 \geq
    \frac{1}{A } \paren[\Big]{ \frac{f''(\Theta)}{H} (\partial_2 \Theta)^2 - 2 \frac{f'(\Theta)}{H^2} \partial_2 \Theta \partial_2 H + \frac{f'(\Theta)}{H} \partial_2^2 \Theta      }  \,.
  \end{equation*}
  Therefore, by~\eqref{AssumptionOnf} and $H \geq 1/\sqrt{A}$,
  \begin{equation} \label{ine:chi2}
    - (\partial_1^2 + \epsilon \partial_2^2) \chi_2 \geq
    -\frac{2}{A} \paren[\Big]{ \frac{f'(\Theta)}{H^2}\paren[\big]{ \partial_1 \Theta \partial_1 H + \epsilon \partial_2 \Theta\partial_2 H } }
    - \frac{C}{\sqrt{A}}
    \,.
  \end{equation}

  \step
  On the other hand,
  \begin{equation*}
    \grad \chi_1 = -\frac{S}{d_1} (1 + \ln H) \grad H
  \end{equation*}
  and
  \begin{equation*}
    - \partial^2_1 \chi_1 =   \frac{S}{d_1} \partial^2_1 H (\ln H +1) +  \frac{S}{d_1}\frac{(\partial_1 H)^2}{H}
  \end{equation*}

  We  note that there exists a function $\rho = \rho(x) > 0$
  that
  \begin{equation*}
    \grad \Theta = \rho(x) \grad^\perp H = \rho(x) v(x)  \,,
  \end{equation*}
  and
  $  \lambda_1 \leq \rho \leq \lambda_2$ on  $\set[\big]{\abs{H} \leq c_0}$
  for some $0< \lambda_1 <\lambda_2$.
  Therefore, by~\eqref{ine:chi2} and $H\geq 1/\sqrt{A}$,
  \begin{align}
     & - \partial^2_1 \phi - \epsilon \partial_2^2 \phi + A v\cdot \grad \phi                                                                                                        \nonumber \\
     & \quad \geq \frac{S}{d_1} \partial^2_1 H (\ln H+1) + \frac{S}{d_1}\frac{(\partial_1 H)^2}{H} -  \frac{f'(\Theta) \abs{\grad H}^2 }{ H} \rho
    \label{ine:supersol}                                                                                                                                                                                \\
     & \qquad -\frac{2}{A} \paren[\Big]{ \frac{f'(\Theta)}{H^2}\paren[\big]{ \partial_1 \Theta \partial_1 H + \epsilon \partial_2 \Theta\partial_2 H } }
    - \frac{C}{\sqrt{A}}    \nonumber \,.
  \end{align}

  Recall
  \begin{equation*}
    R_2 = \Lambda \cap \set{ z_2 \in [ c_0, 1-c_0]} \quad \text{ and } \quad
    R_1 = \Lambda - R_2 \,.
  \end{equation*}
  We would like to estimate the above quantity in $R_1$ and $R_2$.

  \step
  For $R_1$, we decompose this set further
  \begin{equation*}
    R_1^a = R_1 \cap \set{  c_0 \leq {z_1} \leq 1-c_0} \quad \text{ and } \quad R_1^b = R_1 - R_1^a \,.
  \end{equation*}
  In $R_1^a$, there exists a constant $\tilde C$ such that $\abs{\grad H}^2 \geq \tilde C$. Therefore, by~\eqref{AssumptionOnf}, \eqref{ine:supersol} and $H\geq 1/\sqrt{A}$,
  \begin{align*}
    - \partial^2_1 \phi - \epsilon \partial_2^2 \phi + A v\cdot \grad \phi
     & \geq - \frac{f'(\Theta) \abs{\grad H}^2}{H} \rho - C {\norm{f'}_{L^\infty}}               \\
     & \geq \frac{\lambda_1\tilde C  \inf_{R_1}|f'(\Theta)|}{d_1} - C {\norm{f'}_{L^\infty}} \,.
  \end{align*}
  By~\eqref{AssumptionOnf}, we could then pick $d_1$ small, independent of $\epsilon$, to make the following hold
  \begin{equation*}
    - \partial^2_1 \phi - \epsilon \partial_2^2 \phi + A v\cdot \grad  \phi > 1
  \end{equation*}
  in $R_1^a$.

  On the other hand, in $R_1^b$, we have $\abs{\grad H(z_1,z_2)}^2 = z_1^2 + z_2^2$. Therefore, by Cauchy-Schwarz inequality,
  \begin{equation} \label{ine:cauchy}
    \abs[\Big]{ f'(\Theta) \frac{\abs{ \grad H}^2}{H} }
    =
    -  f'(\Theta) \frac{\abs{ \grad H}^2}{H} = -  f'(\Theta) \frac{z_1^2 + z_2^2}{z_1 z_2} \geq 2  \inf_{R_1}|f'|  \,.
  \end{equation}
  Also, note that in $R_1^b$ it holds that $\abs{\partial_i \Theta \partial_i H } = \paren{\partial_i H}^2$ for $i=1,2$.
  Thus, by~\eqref{AssumptionOnf}--\eqref{ine:cauchy} and $H\geq 1/\sqrt{A}$,
  we choose $f$ such that $\lambda_1 \inf_{R_1} \abs{f'} > 2$ and $\epsilon$ small enough to get
  \begin{align*}
     & - \partial^2_1 \phi - \epsilon \partial_2^2 \phi + A v\cdot \grad \phi \\
     & \geq
    -  \frac{f'(\Theta) \abs{\grad H}^2 }{ H} \rho
    -\frac{2}{A} \paren[\Big]{ \frac{f'(\Theta)}{H^2}\paren[\big]{ \partial_1 \Theta \partial_1 H + \epsilon \partial_2 \Theta\partial_2 H } }
    - \frac{C}{\sqrt{A}}                                                               \\
     & \geq
    -  \frac{f'(\Theta) \abs{\grad H}^2 }{ H} \rho
    -\frac{2}{A}\abs[\Big]{\frac{f'(\Theta)\abs{\grad H}^2}{H^2}} - \frac{C}{\sqrt{A}} \\
     & =
    \abs[\Big]{\frac{f'(\Theta)\abs{\grad H}^2}{H}}
    \paren[\Big]{ \rho - \frac{2}{AH}  } - \frac{C}{\sqrt{A}}
    \\
     & \geq \lambda_1\inf_{R_1} \abs{f'}  - \frac{C}{\sqrt{A}} >1  \,.
  \end{align*}

  Thus, we have just shown that there exists a function $f$ that satisfies~\eqref{AssumptionOnf} so that in $R_1$,
  \begin{equation*}
    - \partial^2_1 \phi - \epsilon \partial_2^2 \phi + A v\cdot \grad \phi >1 \,.
  \end{equation*}

  \step
  In $R_2$, there exist constants $C_1, C_2$ so that
  \begin{equation*}
    0<C_2 \leq C_1\abs{\grad H}^2 \leq  (\partial_1 H)^2 \,.
  \end{equation*}

  We then look at
  \begin{align*}
     & - \partial^2_1 \phi - \epsilon \partial_2^2 \phi + A v\cdot \grad \phi                                                                  \\
     & \quad \geq \frac{S}{d_1} \partial^2_1 H (\ln H +1) + \frac{S}{d_1}\frac{(\partial_1 H)^2}{H} -  \frac{f'(\Theta) \abs{\grad H}^2 }{ H} \rho  - C \\
     & \quad  \geq \frac{S}{d_1} \partial^2_1 H (\ln H +1) + \frac{S}{d_1}\frac{C_1 \abs{\grad H}^2}{H}
    - \lambda_2 \norm{f'}_{L^\infty(R_2)} \frac{ \abs{\grad H}^2 }{ H}   -  C                                                                           \\
     & \quad \geq  \frac{C_2}{C_1d_1}\paren[\Big]{ \frac{S C_1}{d_1}  - \lambda_2 \norm{f'}_{L^\infty(R_2)}   } - C \,.
  \end{align*}
  Pick $d_1$ smaller if needed to get
  \begin{equation*}
    - \partial_2 \phi - \epsilon \partial_2^2 \phi + A v\cdot \grad \phi > 1
    \quad \text{ in } R_2
    \,.
  \end{equation*}

  \step
  Combining Steps 3 and 4, we have shown that there exists a function $f$ such that
  \begin{equation*}
    - \partial_2 \phi - \epsilon \partial_2^2 \phi + A v\cdot \grad \phi > 1  \quad \text{ in } \Lambda \,.
  \end{equation*}
  By construction, $\phi \geq U_\epsilon$ on $\set{ H = d_1 } \cup \set{H=\frac{ 1  }{\sqrt{A}}}$. The comparison principle then tells us that
  \begin{equation*}
    \phi \geq U_\epsilon \quad \text{ in } \Lambda
  \end{equation*}
  as desired.
\end{proof}

\section{Proof of Lemma~\ref{lem:innerEst}}
\label{a:innerEst}
In this section, we give the proof of Lemma~\ref{lem:innerEst}.
This fact has been obtained in more generality by PDE method by Ishii and Souganidis~\cite{IshiiSouganidis12}.
Our method proof, still PDE-based, is different than that in~\cite{IshiiSouganidis12}.
Although the argument is new for our particular situation, it is an adaptation of the method in~\cite{Kumagai18}, where the author studies the Freidlin problem for first order Hamilton-Jacobi equations.

It is convenient to work in the so-called curvilinear coordinates $(h, \theta)$, in one cell.
Let $\mathcal Q_0^* = (0, 1)^2 - \Gamma_0$, where $\Gamma_0$ is the closure of one trajectory of the gradient flow of $H$ starting on the boundary of the unit square.
On $\mathcal Q_0^*$ we define the curvilinear coordinates by setting $h = H(x)$, $\theta = \Theta(x)$, where~$\Theta$ solves
\begin{equation*}
    \grad \Theta \cdot \grad H = 0 \,,
\end{equation*}
in $\mathcal Q_0^*$, normalized so that the range of $\Theta$ is $(0, 2\pi)$.
In this coordinate system, $h(x)$ determines the level set of the Hamiltonian to which  $x$ belongs and $\theta$ describes the position of $x$ on this level set.
Since $\grad \Theta$ and $\grad^\perp H$ are parallel, there must exist a non-zero function $\rho$ such that 
\begin{equation*}
  \grad \Theta = \rho \grad^\perp H\,.
\end{equation*}
By reversing the orientation of $\Theta$ if needed, we may assume, without loss of generality, that $\rho > 0$.
Let $J= \partial_1 H \partial_2 \Theta - \partial_2 H \partial_1 \Theta$ be the Jacobian of the coordinate transformation, and note
\begin{equation*}
  J = \rho \abs{\grad H}^2 \,, \quad  \abs{\grad \Theta} = \rho \abs{\grad H} \,.
\end{equation*}

Let $\gamma$ be the solution to~\eqref{e:gamma} with $\gamma_0 = x$, and $T$ be the time period of~$\gamma$.
Note $T$ only depends on $h = H(x)$, and is given by
\begin{equation} \label{eq:Gamma}
  T(h) \defeq \inf\{ t>0: \gamma(t,x)=x\}
    = \oint_{\set{H = h}} \frac{1}{\abs{\grad H}} \, \abs{d\ell} \,,
\end{equation}
where $\abs{d\ell}$ denotes the arc-length integral along the curve $\set{H = h}$.

Let $S(x)  \defeq  \inf\{t \st \gamma(t,x) \in \Gamma_0\}$ be the amount of time $\gamma$ takes to to reach $\Gamma_0$ starting from $x$.
This time is    not a continuous function of $x$.
Therefore, in order to make it continuous, we    modify it to the following continuous function
\begin{equation} \label{eq:gammatilde}
  \tilde{S}(x):= \begin{dcases}
    S(x)              & \text{ if } S(x) > \Gamma(H(x))/2, \\
    -S(x) + \Gamma(H(x)) & \text{ if } S(x) < \Gamma(H(x))/2.
  \end{dcases}
\end{equation}

As we have restricted our attention to one cell, we can assume $H \in [0,1]$. Define the coefficients $D_1$ and $D_2$ on $[0,1]$ 
as follows
\begin{subequations}
  \begin{align}
    \label{e:ADef}
    D_1(h)
     & = \frac{1}{T(h)}
    \oint_{\set{H = h}} \frac{ \abs{\partial_1 H}^2}{\abs{\grad H}}\, \abs{d\ell} \,,
    \\
    \label{e:BDef}
    D_2(h)
     & = \frac{1}{T(h)}
    \oint_{\set{H = h}} \frac{\partial_1^2 H}{\abs{\grad H}} \, \abs{d\ell}\,.
  \end{align}
\end{subequations}
Note that by Gauss--Green theorem, we have
\begin{equation*}
  T(h) D_1(h)  = - \int_{\set{ H\geq h}} \partial_1^2 H(x) \, dx 
  = \int_1^h \oint_{\set{H=h}} \frac{ \partial_1^2 H }{\abs{\grad H}} \, \abs{d\ell}\, dh \,.
\end{equation*}
Therefore,
\begin{equation} \label{e:D1D2relation}
  \frac{d}{dh} (T(h) D_1(h)) = T(h) D_2(h)\,.
\end{equation}

We are now ready to show the proof of Lemma~\ref{lem:innerEst}.

\begin{proof}[Proof of Lemma~\ref{lem:innerEst}]
  As before, we restrict our attention to a particular cell $(0,1)^2$ as the estimate is the same for other ones.
  \restartsteps
  \step
  Let $U_\epsilon(x) \defeq \E^x \tau_0^\epsilon$ and $\Omega_\epsilon \defeq (0, 1)^2 - \cH_{\alpha}$.
  Then, $U_\epsilon$ is the solution to the equation
  \begin{equation*}
    -\frac{1}{2}\partial_1^2 U_\epsilon - \frac{\epsilon^2}{2} \partial_2^2 U_\epsilon + A
    v\cdot \grad U_\epsilon= 1 \quad \text{ on } \Omega_\epsilon \,, \\
  \end{equation*}
  with boundary condition
  \begin{equation*}
    U_\epsilon = 0  \quad  \text{ on } \partial \Omega_\epsilon \,.
  \end{equation*}
  Lemma~\ref{lem:innerEst} will follow immediately from the uniform bound
  \begin{equation*}
    \sup_\epsilon \norm{ U_\epsilon}_{L^\infty(\Omega_1^\epsilon)} \leq C.
  \end{equation*}
  To see why this bound is true, let us consider the solution $\bar U$ to the ODE
  \begin{equation*}
    \begin{dcases}
      -D_1(h) \partial_h^2 \bar U - D_2(h) \partial_h \bar U & = 4 \,, \\
      \hspace{3cm} \bar U(0)                                      & = 4 \,.
    \end{dcases}
  \end{equation*}
  Note that $\bar U$ is bounded. To see this, we use~\eqref{e:D1D2relation} to rewrite the equation 
  \begin{equation*}
    - \frac{1}{T(h)} \partial_h \paren[\Big]{ T(h) D_1 (h) \partial_h \bar U  } = 4 \,.
  \end{equation*}
   Observe that $T(h) D_1(h) \approx O(1-h)$ and $T(h)\to T_0 >0$ as $h\to 1$;
   $T(h) \approx O(\abs{\ln h})$ and $D_1(h) \approx O(1/\abs{\ln h})$ as $h\to 0$ (see Chapter 8.2 in \cite{FreidlinWentzell12}).
   Using these asymptotics, we deduce
  \begin{equation*}
    \partial_h \bar U(h) = \frac{4}{T(h)D_1(h)} \int_h^1 T(s) \,  ds \,,
    \qquad
    \bar U(h) = \int_0^h \frac{4}{T(s) D_1(s)} \int_s^1 T(r) \, dr  ds \,,
  \end{equation*}
  and
  \begin{equation*}
    \norm{\bar U}_{W^{1,\infty}} \leq C \,.
  \end{equation*}

  \step
  Note that $\bar U \circ H$ is a function on $\Omega$.
  Let 
  \begin{equation*}
    g = \partial_1^2 (\bar U \circ H) \,,
  \end{equation*}
  and we see that
  \begin{equation*}
    \bar g(x) \defeq \frac{1}{T(H(x))}
    \int_0^{T(H(x))} g(\gamma(t,x)) \, dt = -4 \,,
  \end{equation*}
  where $T$ is defined in \eqref{eq:Gamma}.
  Define
  \begin{equation*}
    \varphi(x) = \int_0^{\tilde S(x)} ( \bar g(x)- g(\gamma_t(x))) \,
    dt \,,
  \end{equation*}
  where $\tilde S$ is defined in \eqref{eq:gammatilde}.
  Note that %
  \begin{equation} \label{eq:vDotGradVarphi}
    v(x)\cdot \grad \varphi (x) = g(x) - \bar g(x) = g(x) + 4 \,.
  \end{equation}
  To see this, consider
  \begin{align*}
    \varphi(\gamma(s,x)) & = -\int_0^{\tilde S(\gamma(s,x))}
    \paren[\Big]{g(\gamma(t,\gamma(s,x))) - \bar g(\gamma(s,x))} \, dt                               \\
                         & = -\int_{s}^{\tilde S(x)} \paren[\Big]{ g(\gamma(t,x)) - \bar g(x)} \, dt
    \,.
  \end{align*}
  Differentiate in $s$ and evaluate at $s=0$, we get \eqref{eq:vDotGradVarphi}.

  \step
  Let
  \begin{equation*}
    G_\epsilon \defeq \bar U\circ H + \frac{1}{A}\varphi \,,
    \qquad
    L_\epsilon = -\frac{1}{2} \partial_1^2 - \frac{\epsilon^2}{2} \partial_2^2 + A v \cdot \grad\,,
  \end{equation*}
  and note
  \begin{equation*}
    \begin{aligned}
      L_\epsilon G_\epsilon
      &  =  - \frac{1}{2}\partial_1^2( \bar U \circ H) -\frac{1}{2 A} \partial_1^2 \varphi -
      \frac{\epsilon^2}{2} \partial_2^2 (\bar U \circ H)  
       - \frac{\epsilon^2}{2 A}
      \partial_2^2 \varphi + g(x)  + 4
      \\
      & =  -\frac{1}{2 A} \partial_1^2 \varphi
      - \frac{\epsilon^2}{2} \partial_2^2 (\bar U \circ H)
      - \frac{\epsilon^2}{2 A} \partial_2^2 \varphi + 4
      = e_\epsilon + 4\,,
    \end{aligned}
  \end{equation*}
  where $e_\epsilon \defeq  -\frac{1}{2 A} \partial_1^2 \varphi
    - \frac{\epsilon^2}{2} \partial_2^2 (\bar U \circ H)
    - \frac{\epsilon^2}{2 A} \partial_2^2 \varphi$.
  Since $U$ is smooth
  and $e_\epsilon$ converge uniformly to $0$ as $\epsilon \to 0$,
  there exists an $\epsilon_0$ such that for all $\epsilon \leq
    \epsilon_0$,
  $L_\epsilon G_\epsilon \geq 1$ and
  $G_\epsilon \geq U_\epsilon$ on $\partial \Omega_\epsilon$.
  By the maximum principle,
  $G_\epsilon \geq U_\epsilon$ on $\Omega_\epsilon$.
  Finally, observe that $\sup_\epsilon \norm{G_\epsilon}_{L^\infty} <\infty$, which implies what we want.
\end{proof}

\section{Upper bound for energy constrained flows (Theorem~\ref{t:energy})}\label{sec:energy}

In this section our aim is to prove the upper bound in Theorem~\ref{t:energy}.
As in the proof of Proposition~\ref{p:enstrophy}, we will consider the doubled strip $S_2 = \R \times(-1, 1)$ with Dirichlet boundary conditions, and only use velocity fields~$v$ satisfying~\eqref{e:vSymm}.
Our aim is to find $v \in \mathcal V^{0, p}_\U$ satisfying~\eqref{e:vSymm} such that
\begin{equation*}
  \norm{T^v}_{L^\infty} \leq \frac{C\ln \U}{\U}\,,
\end{equation*}
for all sufficiently large~$\U$.
The flow we use is an analog of the one used by Marcotte et\ al.~\cite{MarcotteDoeringEA18} adapted to the periodic strip, and is shown in Figure~\ref{f:skew}.
It consists of $1/\epsilon$ convection rolls of width~$\epsilon$, height~$1$ skewed so that the center of the roll is only $\delta$ away from the top boundary.
Here $\epsilon, \delta > 0$ are small numbers that will shortly be chosen in terms of the P\'eclet number~$\U$.

Let $\nu \in (0,1)$, $\delta = \epsilon^{2 + \nu}$ and
$H \colon \R^2 \to \R$ be defined by
\begin{equation*}
  H(x_1, x_2)  \defeq H_1(x_1) H_{2}(x_2)\,,
\end{equation*}
where $H_1 \colon \R \to \R$, $H_2 = H_{2, \epsilon} \colon [0,1]\to \R$ are Lipschitz functions such that
\begin{equation*}
  H_1(x_1 + 2) = H_1(x_1) \,,
  \qquad
  H_2( - x_2) = H_2(x_2)\,,
\end{equation*}
\begin{equation*}
  H_1(x_1) = \begin{dcases}
    x_1 & x_1 \in \Bigl[0, \frac{1}{2} \Bigr) \,, \\
    1 - x_1 & x_1 \in \Bigl[\frac{1}{2}, \frac{3}{2} \Bigr) \,, \\
    -2 + x_1 & x_1 \in \Bigl[\frac{3}{2}, 2 \Bigr) \,.
  \end{dcases}
\end{equation*}
and
\begin{equation*}
 H_{2}(x_2) = 
 \begin{dcases}
    x_2 & x_2  \in  [-1+2\delta, 1- 2\delta] \,, \\
    0 & x_2 = \pm 1 \,.
 \end{dcases}
\end{equation*}
Moreover, we assume $H_1$, $H_2$ are such that $H$ has only one non-degenerate critical point in the square~$(0, 2) \times (0, 1)$.
Stream lines of such a Hamiltonian are shown in Figure~\ref{f:skew}.

Given~$\epsilon > 0$, define the rescaled Hamiltonian~$H^\epsilon$ by
\begin{equation*}
   H^{\epsilon} (x_1,x_2) \defeq H\paren[\Big]{ \frac{x_1}{\epsilon},  x_2 } \,,
  \quad \text{ and set} \quad
  v^{\epsilon} \defeq \frac{A_\epsilon}{\epsilon} \grad^\perp H^\epsilon = \frac{A_\epsilon }{\epsilon}
  \begin{pmatrix}
  \partial_2 H^\epsilon \\ - \partial_1 H^\epsilon
      \end{pmatrix}\,.
\end{equation*}
Let $T_{\epsilon} = T^{v^{\epsilon}}$ be the solution to~\eqref{e:T}--\eqref{e:TBC} with drift~$v^\epsilon$.

By uniqueness of solutions we see that $T_{\epsilon}$ satisfies $T_{\epsilon}( x_1 + 2 \epsilon, x_2) = T_{\epsilon}(x_1, x_2)$.
Thus, we change variables and define
\begin{equation*}
  y_1 = \frac{x_1}{\epsilon}\,,
  \quad
  y_2 = x_2\,,
  \qquad\text{and}\qquad
  v = \grad^\perp_y H\,.
\end{equation*}
In these coordinates we see that $T_{\epsilon}$ satisfies
\begin{subequations}
\begin{equation}\label{e:Tepsimproved}
    A_\epsilon v \cdot \grad_y T_{\epsilon}
  - \frac{1}{2}\partial_{y_1}^2 T_{\epsilon}
  -  \frac{1}{2}\epsilon^2\partial_{y_2}^2 T_{\epsilon} = \epsilon^2 \,,
\end{equation}
with boundary conditions
\begin{equation}\label{e:TepsBC}
  T_\epsilon(y_1 + 2, y_2) = T_\epsilon (y_1, y_2)\,,
  \qquad\text{and}\qquad
  T_\epsilon(y_1, 1) = T_\epsilon (y_1, -1) = 0\,.
\end{equation}
\end{subequations}

To estimate the size of~$T_\epsilon$, consider the associated diffusion let $Z^\epsilon = (Z^\epsilon_1, Z^\epsilon_2)$ which solves the SDE~\eqref{eq:degenerateProcess}.
And let~$\tau^\epsilon$ (defined in~\eqref{e:tauDef}) be the exit time of $Z$ from the doubled strip $S_2$.
By the Dynkin formula, we know $T_\epsilon = \epsilon^2 \E \tau^\epsilon$, and so estimating~$\E \tau^\epsilon$ will give us a bound on $T_\epsilon$.
This is our next proposition.

\begin{proposition}\label{p:tauBdupdate}
  Given a Hamiltonian~$H$ in the above form, choose $A_\epsilon = 1/\epsilon^\nu$, $v = \grad^\perp_y H$.
  There exists a constant $C = C(\nu)$ such that
  \begin{equation}
    \label{e:tauBdupdate}
    \sup_{z \in \Omega'} \E^z \tau^\epsilon \leq   \frac{C \abs{\ln \epsilon}}{A_\epsilon} \,.
  \end{equation}
  for all sufficiently small~$\epsilon$.
\end{proposition}

The reason the bound~\eqref{e:tauBdupdate} is as follows.
In time $O(\abs{\ln \epsilon}/A_\epsilon)$, deterministic trajectories of the flow~$v$ will move most interior points to $O(\delta)$ away from the $\partial_D S_2$.
In this region, 
the drift has speed $O(A_\epsilon/\delta)$ so particles in this region have $O(\delta/A_\epsilon)$ time to diffuse vertically before getting carried away from the boundary $\partial_D S_2$.
Within this time,
particles can diffuse a vertical distance of $O(\epsilon  \sqrt{\delta/A_\epsilon})$.
By choice of~$\delta = \epsilon^2 / A_\epsilon$, and so $\epsilon \sqrt{\delta / A_\epsilon} = \delta$, and hence particles a distance $O(\delta)$ away from $\partial_D S_2$ exit $S_2$ with non-zero probability, before being carried away from~$\partial_D S_2$ by the flow.
Now using the strong Markov property we can estimate~$\E \tau^\epsilon$ by the expected time to success of repeated Bernoulli trials, leading to~\eqref{e:tauBdupdate}.
Before carrying out these details, we first show how it can be used to finish the proof of Theorem~\ref{t:energy}.
\begin{proof}[Proof of the upper bound in Theorem~\ref{t:energy}]
  Clearly it is enough to prove~\eqref{e:E1upperBdLinf} for $q = \infty$.
  Let $v$ be the flow from the Hamiltonian in Proposition~\ref{p:tauBdupdate} and $A_\epsilon = \epsilon^{-\nu}$.
  We note that
  \begin{equation*}
    \U = \norm{v^\epsilon}_{L^p}
    = O\paren[\Big]{\frac{A_\epsilon}{\epsilon} \paren[\Big]{ \frac{1}{\epsilon^p} + \frac{1}{\delta^{p-1}} }^{1/p} }
    = O\paren[\Big]{\frac{A_\epsilon}{\epsilon} \paren[\Big]{ \frac{1}{\epsilon^p} + \frac{1}{\epsilon^{(2+\nu)(p-1)}} }^{1/p} }
    = O(\epsilon^{-q})\,,
  \end{equation*}
  where
  \begin{equation*}
    p' = \begin{dcases}
      2 + \nu & 1 \leq p \leq \frac{2 + \nu}{1 + \nu} \,,
      \\
      1 + \nu + \frac{(2 + \nu)(p -1)}{p}
	& p \geq \frac{2 + \nu}{1 + \nu}\,.
    \end{dcases}
  \end{equation*}

  Let~$T_\epsilon$ be the solution to~\eqref{e:Tepsimproved}--\eqref{e:TepsBC}, and note that by Dynkin's formula, $T_\epsilon = \epsilon^2 \E \tau^\epsilon$.
  Thus, by Proposition~\ref{p:tauBdupdate}
  \begin{equation*}
    \norm{T_\epsilon}_{L^\infty} 
    \leq  \frac{C \epsilon^2 {\abs{\ln \epsilon}}}{A_\epsilon}
    \leq  \frac{C \ln \U}{\U^{(2 + \nu)/ p'}}\,.
  \end{equation*}
  If $p < 2$, then by choosing $\nu > 0$ small enough we can ensure $p \leq (2 + \nu) / (1 + \nu)$.
  In this case $2 + \nu = p'$ and hence
  \begin{equation*}
    \norm{T_\epsilon}_{L^\infty} \leq \frac{C \ln \U}{\U}\,.
  \end{equation*}
  On the other hand, if $p \geq 2$, then for any $\mu > 0$ we can choose $\nu > 0$ small enough to ensure
  \begin{equation*}
    \norm{T_\epsilon}_{L^\infty} \leq \frac{C_\mu \ln \U}{\U^{\frac{2 p}{3p - 2} - \mu}}\,,
  \end{equation*}
  finishing the proof.
\end{proof}

It remains to prove Proposition~\ref{p:tauBdupdate}.
The key step is to show that starting from any point in~$S_2$, the probability $Z^\epsilon$ hits the boundary $\partial_D S_2$ in time $O(\abs{\ln \epsilon} / A_\epsilon)$ is bounded away from~$0$.
This is our next lemma.
\begin{lemma}\label{l:optimallowerexit}
 Let $A_\epsilon = \epsilon^{-\nu}$. There exists constants $ p_0 = p_0(\nu) \in (0,1)$ and $K = K(\nu) \in \N$, independent of $\epsilon$, such that
 \begin{equation}
   \label{e:optimalexitprobability}
  \inf_{z\in \Omega'}\P^z \paren[\Big]{ \tau^\epsilon \leq \frac{K\abs{ \ln \epsilon}}{A_\epsilon}  } 
  \geq p_0 \,,
 \end{equation} 
  for all sufficiently small~$\epsilon > 0$.
\end{lemma}

Using Lemma~\ref{l:optimallowerexit} one can prove Proposition~\ref{p:tauBdupdate} by treating the exit from the strip as repeated Bernoulli trials.
\begin{proof}[Proof of Proposition~\ref{p:tauBdupdate}]
    Letting $t_i = i K \abs{\ln\epsilon}/A_\epsilon$, we note
    \begin{multline*}
      \sup_{z \in \Omega'} \P^z( \tau^\epsilon \geq t_i )
	= \sup_{z \in \Omega'} \E^z( \E^z(\one_{\tau^\epsilon \geq t_{i-1} } \one_{\tau^\epsilon \geq t_i} \given \mathcal F_{t_{i-1}} ) )
      \\
	= \sup_{z \in \Omega'} \E^z( \one_{\tau^\epsilon \geq t_{i-1} } \P^{Z_{t_{i-1}}} (\tau^\epsilon \geq (t_i - t_{i-1}) )
	\leq (1 - p_0) \sup_{z \in \Omega'} \P^z ( \tau^\epsilon \geq t_{i-1} )\,.
    \end{multline*}
    and hence
    \begin{equation*}
      \sup_{z \in \Omega'} \P^z( \tau^\epsilon \geq t_i )
	\leq (1 - p_0)^i\,.
    \end{equation*}
    Consequently,
  \begin{align*}
    \E^z \tau^\epsilon 
      &= 
      \int_0^\infty \P^z\paren{\tau^\epsilon \geq t} \, dt  
      \leq \sum_{i=0}^\infty (t_{i+1} - t_i) \P^z(\tau^\epsilon \geq t_i)
    \\
      &\leq
      \frac{K \abs{\ln \epsilon}}{A_\epsilon}
      \sum_{i=0}^\infty (1 - p_0)^i
      = \frac{K  \, \abs{\ln\epsilon}}{p_0 A_\epsilon} \,,
  \end{align*}
  for every $z \in \Omega'$.
  This yields~\eqref{e:tauBdupdate} as desired.
\end{proof}

It remains to prove Lemma~\ref{l:optimallowerexit}, and this constitutes the bulk of this section.
We will subsequently assume $A_\epsilon = \epsilon^{-\nu}$, and for notational convenience simply write $A$ instead of $A_\epsilon$.

Let $\kappa_1$, defined by
\begin{equation}
  \kappa_1 \defeq \inf \set[\big]{ t\geq 0 \st Z^\epsilon_t \in  (0,2)\times (1-2  \delta, 1) } \,,
\end{equation}
be the first time $Z^\epsilon_t$ hits the set $(0,2)\times (1-2\delta, 1)$.
\begin{lemma}
  \label{l:optimalinteriorestbulk}
  Let $0< h_0 \ll c_0$ be a small constant independent of $\epsilon$, and define
  \begin{equation*}
    R_{h_0} = \Omega \cap \paren[\big]{ \cH_{h_0}^c \cup (1-c_0, 1+c_0)\times (c_0, 1-c_0) }\,.
  \end{equation*}
  Suppose~$h_0$ is small enough so that $\mathcal B_{h_0}^c \cap (1 - c_0, 1 + c_0) \times (c_0, 1 - c_0)$ is nonempty.
  There exists constants $C_0>0$ and $p_1 \in (0,1)$ such that 
  \begin{equation}
  \label{ine:hittingtopprobabilitybulk}
    \inf_{z_0 \in R_{h_0}}\P^{z_0} \paren[\Big]{ \kappa_1 \leq \frac{C_0}{A}   } \geq p_1 \,.
  \end{equation}
\end{lemma}
The proof of Lemma~\ref{l:optimalinteriorestbulk} is
based on a standard tube lemma argument and is
presented in Appendix~\ref{s:tubelemmas}.

\begin{lemma}\label{lem:lambda2estop}
  Let $h_0$ be as in Lemma~\ref{l:optimalinteriorestbulk}, 
  $T_0 = \inf\set{t>0: \gamma_{2,t} \in \set{2c_0, 1-2c_0}   }$, 
  and $T_1 = \min\set{T_0, \abs{\ln A}/A}$.
  Then
  \begin{equation} \label{e:lambda2estop}
    \inf_{\cH_{h_0} \cap (0,2)\times (0,c_0)}\P^{z_0}\paren[\Big]{
      Z_{T_1} \in (1-2c_0,1+ 2c_0)\times \paren{c_0, 1-c_0} }
    \geq 1  - \frac{C \ln A}{A^{1/2}}\,,
  \end{equation}
  and
  \begin{equation} \label{e:lambda2estop2}
    \inf_{\cH_{h_0} \cap (0,2)\times(1-c_0,1-2\delta)}\P^{z_0}\paren[\Big]{
      Z_{T_1} \in \paren[\big]{(0, 2c_0)\cup (2-2 c_0, 2)} \times \paren{ c_0, 1-c_0} }
    \geq 1 - \frac{C \ln A}{A^{1/2}}\,.
  \end{equation}
\end{lemma}
\begin{proof}
  We only show the proof for~\eqref{e:lambda2estop} as~\eqref{e:lambda2estop2} holds also by symmetry.
  Let $q \geq 2$ be some large number to be chosen later, and let $\tilde z_0$  be the point in the set $\set{ H \in (A^{-1/q}, h_0)  }$ which is closest to $z_0$.
  Let $\tilde d = A \abs{z_0 - \tilde z_0}$
  and $\gamma_t$ be the solution to~\eqref{e:gamma}, with~$\gamma_0 = \tilde z_0$.
  Note that, if $z_0$ is already in $\set{ H \in (A^{-1/q}, h_0)  }$, then $\tilde d=0$.
  Also, by Assumption~\ref{A1},
  \begin{equation} \label{ine:projectionDistanceop}
      \frac{\tilde d}{A} \leq  \frac{C}{A^{1/(2q)}} \,.
  \end{equation}
  By It\^{o} formula, we have
  \begin{equation*}
      \E^{z_0}\abs{Z_t - \gamma_t}^2
  \leq \frac{\tilde d^2}{A^2}
    + 2 A \norm{v}_{C^1}
      \int_0^t \E^{z_0}\abs{Z_s - \gamma_s}^2 \, ds
      +(1 + \epsilon^2)t \,.
  \end{equation*}
  By Gronwall's inequality, it follows that
  \begin{equation*}
    \begin{aligned}
      \E^{z_0}\abs{Z_t - \gamma_t}^2
      \leq \paren[\Big]{\frac{\tilde d^2}{A^2} + (1+\epsilon^2)t}e^{2 \norm{v}_{C^1} At}\,.
    \end{aligned}
  \end{equation*}
  Now, let $T = \inf\set{ t >0 \st \gamma_{2,t} \in \paren{2c_0, 1-2c_0}}$, and note that $T \leq D \ln A / (Aq)$ for some constant $D > 0$.
  By~\eqref{ine:projectionDistanceop}, we have
  \begin{equation*}
    \begin{aligned}
       \P^{z_0}\paren[\Big]{ \abs{Z_T - \gamma_T} \geq \frac{c_0}{10}  }
      &\leq
      \frac{100}{c_0^2}\paren[\Big]{\frac{C}{A^{2q}} + (1+\epsilon^2)\frac{D\ln A}{Aq}}e^{2 \norm{v}_{C^1} D\ln A/q} \\
       & \leq
      C A^{2D \norm{v}_{C^1} /q-1}\ln A
	\leq \frac{C \ln A}{A^{1/2}}\,,
    \end{aligned}
  \end{equation*}
  provided $q$ is chosen so that $ 2\norm{v}_{C^1} D/q -1 < -1/2$.
  we have
  \begin{equation} \label{ine:lowerProjectionop}
    \P^{z_0}\paren[\Big]{ \abs{Z_T - \gamma_T} < \frac{c_0}{10}  }
    \geq
    1 - \frac{C \ln A}{A^{1/2}} 
    \,.
  \end{equation}
  Since the trajectories of~$Z$ are continuous,
  \begin{equation*}
    \set{Z_{T_1} \in (1-2c_0,1+ 2c_0)\times \paren{c_0, 1-c_0}}
    \supseteq \set[\Big]{\abs{Z_T - \gamma_T} < \frac{c_0}{10} }\,,
  \end{equation*}
  from which~\eqref{e:lambda2estop} follows.
\end{proof}

\begin{lemma}
  \label{lem:boundaryestimateoptimal}
  There exists constants $D >0$, $p_2 \in (0, 1)$, independent of $\epsilon$ so that 
  \begin{equation}
    \label{ine:boundaryestimateoptimal}
    \inf_{z_0 \in \cH_{h_0}}\P^{z_0}\paren[\Big]{ \kappa_1 \leq \frac{D\abs{\ln A}}{A}  } \geq p_2 \,.
  \end{equation}
\end{lemma}
\begin{proof}
  Denote
  \begin{gather*}
    \Box_1 \defeq (1-2c_0,1+2c_0)\times (c_0, 1-c_0)  \,, \\
    \Box_2 \defeq \cH_{h_0} \cap \set{x_2 \in (0, c_0)} \,, \\
    \Box_3 \defeq \cH_{h_0} \cap \paren[\big]{(0,2c_0) \cup (2-2c_0)}\times (c_0, 1-c_0) \,, \\
    \Box_4 \defeq \cH_{h_0} \cap \set{ x_2 \in (1-c_0, 1)}  \,.
  \end{gather*}
  First, if $z_0 \in \cH_{h_0} \cap \Box_1$, we are done, by Lemma~\ref{l:optimalinteriorestbulk}.

  Suppose now that $z_0\in \Box_2$.
  Let $T_1$ be as in Lemma~\ref{lem:lambda2estop}.
  By Lemmas~\ref{l:optimalinteriorestbulk}, \ref{lem:lambda2estop} and the strong Markov property we note
  \begin{align}
    \nonumber
    \P^{z_0} \paren[\Big]{ \kappa_1 \leq \frac{D}{A} + T_1  }
      &\geq \P^{z_0}\paren[\big]{Z_{T_1} \in \Box_1}
      \inf_{z_1 \in \Box_1} \P^{z_1}\paren[\Big]{ \kappa_1 \leq \frac{D}{A}  }
    \\
    \label{ine:box2est}
    &\geq
      \paren[\Big]{1 - \frac{C \ln A}{A} }p_1\,.
  \end{align}

  Suppose now that $z_0 \in \Box_3$. 
  Denote $\kappa_2 \defeq \inf\set{t>0 \st Z_{1,t} \in \set{ 2c_0, 2-2c_0}}$.
  By a similar argument as in Lemma~\ref{lem:lambda2estop}, there exists $p \in (0,1)$ such that 
  \begin{equation*}
    \inf_{z_0 \in \Box_3} \P^{z_0} \paren[\Big]{ \kappa_2 \leq \frac{\abs{\ln A}}{A}  } \geq p \,.
  \end{equation*}
  There are two possibilities:
  \begin{enumerate}
    \item There exists a $p_2'$, independent of $\epsilon$ such that 
      \begin{equation*}
	\P^{z_0}\paren[\Big]{Z_{\kappa_2} \in \Box_2 \,; \kappa_2 \leq \frac{\abs{\ln A}}{A} } \geq p_2'\,.
      \end{equation*}
      In this case, we can apply the same argument as in~\eqref{ine:box2est} to arrive at the desired result.

    \item
      Otherwise, there exists a constant $p_2'$, independent of $\epsilon$ such that 
      \begin{equation*}
        \P^{z_0}\paren[\Big]{H(Z_{\kappa_2}) \geq h_1 \,;
        \kappa_2 \leq \frac{\abs{\ln A}}{A}  } \geq p_2' \,,
      \end{equation*}
      for some $h_1$ independent of $\epsilon$.
      We can then apply Lemma~\ref{l:optimalinteriorestbulk} to get the desired result.
  \end{enumerate}

  The same argument works when $z_0 \in \Box_4$,  and this completes the proof of~\eqref{ine:boundaryestimateoptimal}.
\end{proof}

\begin{lemma}
 \label{lem:exittopprobability} 
 There exists a constant $p_3 \in (0,1)$ such that 
 \begin{equation}
 \label{ine:exittopprobability} 
   \inf_{z_0 \in \set{z\st z_2 \geq 1- 2\delta}}\P^{z_0} \paren[\Big]{ \tau^\epsilon \leq \frac{\epsilon}{A}  } \geq p_3 \,.
 \end{equation}
\end{lemma}
\begin{proof}
  Denote $T_3(z) = \inf\set{ t>0 \st \gamma_{2,t} \leq 1-4\delta \,, \gamma_0 = z   }$,
  and let
  \begin{equation*}
  T_4 \defeq    \inf_{\set{z \st z_2 \geq 1- 2 \delta}} T_3(z)\,.
  \end{equation*}
  By definition of~$H$ we see that $T_4 \geq C \delta / A$ for some constant~$C$.
  In time $C \delta / A$ the process~$Z$ diffuses a distance of $O(\epsilon \sqrt{\delta/A}) = O(\delta)$ vertically, and hence should hit the top boundary with a probability that is bounded away from~$0$.
  That is, we should have
  \begin{equation}\label{e:exittopprob1}
    \P^{z_0}\paren[\big]{ \tau^\epsilon \leq T_4   } \geq p_3 \,,
  \end{equation}
  which immediately implies~\eqref{ine:exittopprobability}.
  The inequality~\eqref{e:exittopprob1} can proved using a tube lemma (Lemma~\ref{l:smallDev}) and is the same as the proof of Lemma~\ref{lem:topest}.
\end{proof}

\begin{proof}[Proof of Lemma~\ref{l:optimallowerexit}]
  Given Lemmas~\ref{l:optimalinteriorestbulk}, \ref{lem:boundaryestimateoptimal}, \ref{lem:exittopprobability}, the proof of~\eqref{e:optimalexitprobability} is identical to that of Lemma~\ref{lem:lowerBoundExit}.
\end{proof}

\appendix
\section{Tube Lemmas}\label{s:tubelemmas}
In this appendix, we prove several ``tube lemmas'' and estimate the probability a diffusion stays close to the underlying deterministic flow.
Many such estimates are standard and can be found in books (see for instance~\cites{FreidlinWentzell12}).
However, in our situation, we require estimates where the diffusion coefficient is degenerate in one direction and the amplitude of the drift is large.
While the proofs follow standard techniques, the estimates themselves aren't readily available in the literature, and we present them here.

Throughout this appendix we consider the SDE
\begin{equation}\label{eq:SDE}
  dZ_t = Av(Z_t) \, dt  + \sigma \, dB_t \,,
\end{equation}
where
\begin{equation} \label{cond:flow}
  \norm{v}_{L^\infty} \leq 1 \,, \quad  \norm{Dv}_{L^\infty} \leq 1   \,,
\end{equation}
\begin{equation} \label{cond:sigma}
  \sigma = (\sigma_{ij}) = \begin{pmatrix} 1 & 0\\ 0 & \epsilon \end{pmatrix}  \,.
\end{equation}
For notational convenience we will often denote the diagonal entries with just one subscript and write $\sigma_i$ for $\sigma_{ii}$ (i.e.\ $\sigma_1 = 1$ and $\sigma_2 = \epsilon$).

\begin{lemma} \label{l:STPC}
  Fix $\lambda, \beta > 0$, and define $T = T_{\beta, A}$ and $R = R_{A, \lambda}$ by
  \begin{equation}\label{e:TR}
    T \defeq \frac{\beta}{A}\,,
    \qquad
    R \defeq \paren[\Big]{ 1-\frac{\lambda}{\sqrt{A}},\, 1 + \frac{\lambda}{\sqrt{A}} }\times \paren{1-\epsilon, \, 1}\,.
  \end{equation}
  Let $z_0 \in R$, $u \in C^1(\R^2)$ and let $\tilde \gamma$ be the solution to the ODE
  \begin{equation*}
    \partial_t \tilde \gamma_t = A u(\tilde \gamma_t) \, dt \,,
    \qquad\text{with}\qquad
    \tilde \gamma_0 = z_0 \,,
  \end{equation*}
and $\tilde \Gamma = \set{ \tilde \gamma(t) \st t \in [0, T] }$ be the image of $\tilde \gamma$.
  Denote 
  \begin{equation*}
    L_T = \frac{A^2}{2}
      \int_0^T \sum_{i = 1,2}
      \paren[\Big]{
        \frac{\abs{u_i(\tilde \gamma(t)) - v_i(\tilde \gamma(t)) } }{\sigma_i} +
      \sum_{j = 1}^2  \frac{\sigma_j \norm{\partial_j v_i}_{L^\infty(R + \tilde \Gamma)}}{\sigma_i\sqrt{A}}
      }^2 \, dt \,.
  \end{equation*}
  Then for some $\alpha>0$  we have
  \begin{equation*}
    \P^{z_0} \paren[\Big]{
      \sup_{0 \leq t \leq T}
      \abs{\sigma^{-1} (Z_t - \tilde \gamma_t)}_{\infty}  \leq \frac{\lambda}{\sqrt{A}} }
    \\
    \geq
    \P \paren[\Big]{\sup_{t\leq T} \abs{B_t}_\infty \leq \frac{\lambda}{\sqrt{A}} }
    \exp\paren[\Big]{ 
      -\alpha\sqrt{L_T} - \frac{1}{2} L_T
    }
  \end{equation*}
  for all sufficiently large~$A$.
  Here the notation $\abs{z}_\infty$ denotes $\max_i \abs{z_i}$.
\end{lemma}
\begin{remark}
  A similar upper bound also holds, but is not needed for purposes of this paper.
\end{remark}
\begin{proof}
  Define the process $\tilde Z$ by
  \begin{equation*}
    d\tilde Z_t = A u(\tilde  \gamma_t) \, dt + \sigma \, dB_t\,,
    \qquad\text{with}\qquad \tilde Z_0 = z_0\,.
  \end{equation*}
  Define
  \begin{gather}
    h(t) \defeq A( u(\tilde \gamma_t) - v(\tilde Z_t))\,,  \nonumber
    \\
    \hat h(t) \defeq \sigma^{-1} h(t)\,, \nonumber
    \\
    M_t  \defeq \exp \paren[\Big]{ -\int_0^t \hat h(s) \, dB_s -  \frac{1}{2} \int_0^t \hat h(s)^2 \, ds   } \label{eq:martingale}
  \end{gather}
  \begin{equation*}
  \end{equation*}
  and a measure $\hat \P$ so that
  \begin{equation*}
    d\hat \P = M_T \, d\P \,.
  \end{equation*}
  By the Girsanov theorem (see, for example, Theorem 8.6.6 in \cite{Oksendal03}), the process
  \begin{equation*}
    \hat B_t \defeq \int_0^t \hat h(s) \, ds +   B_t
  \end{equation*}
  is a Brownian motion with respect to the measure $\hat \P$ up to time~$T$.
  Since
  \begin{equation*}
    d \tilde Z = A v(\tilde Z) \, dt + \sigma \, d\hat B_t\,,
  \end{equation*}
  by weak uniqueness we have
  \begin{equation*}
    \E^{z_0} f(Z_t) = \hat \E^{z_0} f(\tilde Z_t)
    = \hat \E^{z_0} f(\tilde \gamma_t + \sigma B_t)
    = \E^{z_0} \paren[\big]{ f(\tilde \gamma_t + \sigma B_t) M_t } \,,
  \end{equation*}
  for any test function~$f$.
  Thus
  \begin{align*}
    \P^{z_0} \paren[\Big]{
      \sup_{t \leq T}
      \abs{\sigma^{-1} (Z_t - \tilde \gamma_t)}_\infty
      \leq \frac{\lambda}{\sqrt{A}}
    }
     & = \E^{z_0} \paren[\Big]{ \one_K M_T  } \,.
  \end{align*}
  where
  \begin{equation*}
    K \defeq \set[\Big]{
      \sup_{t \leq T}\,
      \abs{B_t}_\infty \leq \frac{\lambda}{\sqrt{A}}
    }\,.
  \end{equation*}

  Now let~$\alpha = (2 / \P^{z_0}(K))^{1/2}$, and $\hat K$ be the event
  \begin{equation*}
    \hat K \defeq \set[\Big]{
      \paren[\Big]{ \int_0^T \hat h(t) \, dB_t }^2
      < \alpha^2 \int_0^T \hat h(t)^2 \, dt
    }\,.
  \end{equation*}
  By Chebychev's inequality and the It\^{o} isometry, we see
  \begin{equation*}
    \P^{z_0}\paren{\hat K^c} \leq \frac{1}{\alpha^2}  = \frac{\P^{z_0}(K)}{2}\,,
  \end{equation*}
  and hence
  \begin{equation*}
    \P^{z_0}( K \cap \hat K ) \geq \frac{\P^{z_0} (K) }{2}\,.
  \end{equation*}
  Thus
  \begin{align}
    \nonumber
    \E^{z_0} \paren{ \one_{K} M_T }
     & \geq \E^{z_0}\paren[\Big]{ \one_{K \cap \hat K}
      \exp\paren[\Big]{
        -\alpha\paren[\Big]{\int_0^T \hat h(t)^2 \, dt}^{1/2}
        -\frac{1}{2} \int_0^T \hat h(t)^2 \, dt   }
    }
    \\
    \label{e:1kmt1}
     & \geq  \frac{\P^{z_0}(K)}{2}
      \inf_{K}
      \exp\paren[\Big]{
      -\alpha\paren[\Big]{\int_0^T \hat h(t)^2 \, dt}^{1/2}
      -\frac{1}{2} \int_0^T \hat h(t)^2 \, dt   }\,.
  \end{align}

  To estimate the exponential, note that on the event~$K$ we have
  \begin{align}
    \nonumber
    \abs{\hat h_i(t)} =
    \frac{\abs{h_i(t)}}{\sigma_i} & =
    \frac{A}{\sigma_i}
    \abs[\Big]{
      v_i(\tilde \gamma_t + \sigma B_t)
      - v_i(\tilde \gamma_t)
      + v_i(\tilde \gamma_t) - u_i(\tilde \gamma_t)
    }
    \\
    \label{e:hath1}
                                  & \leq \frac{\lambda \sqrt{A}}{\sigma_i} \sum_j
    \sigma_j \norm{\partial_j v_i}_{L^\infty(\tilde \Gamma + R)}
    + \frac{A \abs{u_i(\tilde \gamma_t) - v_i(\tilde \gamma_t)}}{\sigma_i}\,,
  \end{align}
  for every $i =1,2$.
 Combining~\eqref{e:hath1}  with~\eqref{e:1kmt1} completes the proof.
\end{proof}

\begin{lemma}\label{l:smallDev}
  Using the same notation as in Lemma~\ref{l:STPC}, we now additionally assume
  \begin{gather}
    \label{e:uvAssumption11}
    \max_{i \in \set{1, 2}}
    \sum_{j = 1,2} \frac{
      \sigma_j \norm{\partial_j v_i}_{L^\infty(R + \tilde \Gamma)}
    }{\sigma_i}
    \leq C_0
    \\
    \label{e:uvAssumption12}
    \sum_{i= 1,2} \int_0^T \frac{A^2 \abs{u_i(\tilde \gamma_t) - v_i(\tilde \gamma_t)}^2}{\sigma_i^2} \, dt \leq C_0^2\,.
  \end{gather}
  Then there exists $C_1 = C_1(C_0, \lambda, \beta) > 0$ such that
  \begin{equation*}
    \P^{z_0} \paren[\Big]{
      \sup_{0 \leq t \leq T}
      \abs{\sigma^{-1}( Z_t - \tilde \gamma_t )}_\infty  \leq \frac{\lambda}{\sqrt{A}} }
    \geq C_1
  \end{equation*}
\end{lemma}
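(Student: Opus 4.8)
The plan is to read Lemma~\ref{l:smallDev} off Lemma~\ref{l:STPC}. The latter already supplies
\[
  \P^{z_0} \paren[\Big]{
    \sup_{0 \leq t \leq T}
    \abs{\sigma^{-1} (Z_t - \gamma_t)}_\infty  \leq \frac{\lambda}{\sqrt{A}} }
  \geq
  \P \paren[\Big]{\sup_{t\leq T} \abs{B_t}_\infty \leq \frac{\lambda}{\sqrt{A}} }
  \exp\paren[\Big]{ -\alpha\sqrt{L_T} - \frac{1}{2} L_T } \,,
\]
so it is enough to bound both factors on the right below by positive constants depending only on $C_0$, $\lambda$ and $\beta$ (and not on $A$, $\epsilon$ or $z_0$). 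The two extra hypotheses are tailored precisely for this: \eqref{e:uvAssumption12} is a uniform $L^2([0,T])$ bound on the rescaled drift mismatch $A(u-v)/\sigma$ along the trajectory, and \eqref{e:uvAssumption11} is a uniform bound on the rescaled linearization error of $v$ over the tube $R+\Gamma$. These are exactly the two terms that make up the Girsanov weight $\hat h$ in the proof of Lemma~\ref{l:STPC}, and controlling them is the only obstruction to uniformity in $A$ and $\epsilon$.

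First I would bound $L_T$. Applying $(a+b)^2 \le 2a^2+2b^2$ to the squared sum defining $L_T$ (turning the prefactor $A^2/2$ into $A^2$) gives
\[
  L_T \le A^2 \int_0^T \sum_{i=1,2} \paren[\Big]{
    \frac{\abs{u_i(\gamma_t) - v_i(\gamma_t)}^2}{\sigma_i^2}
    + \frac{1}{A}\paren[\Big]{ \sum_{j=1,2} \frac{\sigma_j \norm{\partial_j v_i}_{L^\infty(R + \Gamma)}}{\sigma_i} }^2
  } \, dt \,.
\]
By~\eqref{e:uvAssumption12} the first group of terms integrates to at most $C_0^2$, and by~\eqref{e:uvAssumption11} together with $T = \beta/A$ the second group is at most $A^2 \cdot \frac{1}{A} \cdot 2C_0^2 \cdot \frac{\beta}{A} = 2\beta C_0^2$. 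Hence $L_T \le (1+2\beta)C_0^2 =: L^*$, which depends only on $C_0$ and $\beta$.

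Next I would bound the Gaussian small-ball probability. Since $T = \beta/A$, Brownian scaling shows that $\sup_{0\le t\le T}\abs{B_t}_\infty$ has the same law as $A^{-1/2}\sup_{0\le s\le\beta}\abs{\tilde B_s}_\infty$ for a standard Brownian motion $\tilde B$, so
\[
  \P \paren[\Big]{ \sup_{0\le t\le T}\abs{B_t}_\infty \le \frac{\lambda}{\sqrt A} }
  = \P \paren[\Big]{ \sup_{0\le s\le\beta}\abs{\tilde B_s}_\infty \le \lambda }
  =: p(\lambda,\beta) > 0 \,,
\]
independently of $A$; in particular $\alpha = (2/p(\lambda,\beta))^{1/2}$, as produced in the proof of Lemma~\ref{l:STPC}, depends only on $\lambda$ and $\beta$. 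Substituting both bounds into the displayed inequality yields
\[
  \P^{z_0} \paren[\Big]{ \sup_{0\le t\le T}\abs{\sigma^{-1}(Z_t - \gamma_t)}_\infty \le \frac{\lambda}{\sqrt A} }
  \ge p(\lambda,\beta)\,\exp\paren[\Big]{ -\alpha\sqrt{L^*} - \frac{1}{2}L^* }
  =: C_1(C_0,\lambda,\beta) > 0 \,,
\]
valid, as in Lemma~\ref{l:STPC}, for all sufficiently large $A$.

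I do not expect a genuine obstacle once Lemma~\ref{l:STPC} is in hand: the estimate on $L_T$ is a one-line computation and the lower bound on the small-ball probability is pure Brownian scaling. The only thing needing care --- and the reason the statement is isolated --- is checking that every constant ($\alpha$, $p(\lambda,\beta)$, $L^*$) is uniform in $A$ and $\epsilon$; this is exactly what the scalings $T = \beta/A$ and $R \sim \lambda/\sqrt A$ built into Lemma~\ref{l:STPC} deliver, matched by the scale-invariant form of the hypotheses~\eqref{e:uvAssumption11}--\eqref{e:uvAssumption12}.
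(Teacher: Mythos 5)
Your proposal is correct and follows essentially the same route as the paper: the paper's own proof simply plugs the hypotheses~\eqref{e:uvAssumption11}--\eqref{e:uvAssumption12} into the bound~\eqref{e:hath1} on $\hat h$ to get $\int_0^T \abs{\hat h}^2\,dt \leq 2C_0^2(1+\lambda\beta d)$ and then invokes~\eqref{e:1kmt1}, which is exactly your estimate on $L_T$ phrased inside the proof of Lemma~\ref{l:STPC} rather than at the level of its statement. Your explicit Brownian-scaling verification that $\P(K)$ and hence $\alpha$ depend only on $\lambda$ and $\beta$ is left implicit in the paper but is the right observation.
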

\begin{proof}
  Following the proof of Lemma~\ref{l:STPC}, and using~\eqref{e:uvAssumption11}--\eqref{e:uvAssumption12} in~\eqref{e:hath1} gives
  \begin{equation*}
    \int_0^T \abs{\hat h(t)}^2 \, dt \leq 2 C_0^2 (1 + \lambda \beta d)\,.
  \end{equation*}
  Combined with~\eqref{e:1kmt1} the lemma follows.
\end{proof}

Next, we show the following estimate for the side boundary layer.

\begin{lemma} \label{l:STPC2}
  Let $z_0 \in \tilde\cH_n \defeq \cH_n - [c_0, 1-c_0]\times [0,1]$ and $n\in \N$;  $Z_t$ be a stochastic process satisfying~\eqref{eq:SDE}--\eqref{cond:sigma} and $\gamma_t$ be a deterministic process satisfying
  \begin{equation*}
    \partial_t \gamma_t = A v(\gamma_t) \quad \text{ with } \quad 
    \gamma_0 = z_0 \,.
  \end{equation*}
  Let~$T, R$ be as in~\eqref{e:TR}, and~$\Gamma = \set{\gamma(t) \st t \in [0, T] }$ be the image of~$\gamma$, and assume
  \begin{equation}\label{e:d1v2eq0}
    \partial_1 v_2 = 0 \qquad\text{in } \Gamma + R\,.
  \end{equation}
  For $M \geq 1$, let $\tilde R_\epsilon\subseteq [1-M/\sqrt{A}, 1+M/\sqrt{A}]$ be a Borel set,
  and $T = m/A$ for some $m \in \N$.
  Then, there exists a constant $C= C_{m, M}$ and $\epsilon_0 > 0$
  such that for all $\epsilon < \epsilon_0$,
  \begin{align} \label{ine:sideest}
    \nonumber
    \MoveEqLeft
    \P^{z_0}\paren[\Big]{
      \sup_{0 \leq t \leq T} \abs{Z_{1,t} -  \gamma_{1,t} } \leq  \frac{2M}{\sqrt{A}  } \,,
      \sup_{0\leq t\leq T}
      \abs{Z_{2,t} -  \gamma_{2,t} }\leq \frac{\epsilon}{\sqrt{A}}
      \,,
      Z_{1,T} - \gamma_{1,T} \in \tilde R_\epsilon
      }
    \\
     & \qquad\geq  
     C_{m,n}\P\paren[\Big]{ \abs{B_t} \leq \frac{2M}{\sqrt{A}  } \,,
      B_{1,T} \in \tilde R_\epsilon
    } 
  \end{align}
\end{lemma}
As before we write $Z = (Z_1, Z_2)$, $\gamma = (\gamma_1, \gamma_2)$, and the notation $Z_{i, t}$ and~$\gamma_{i, t}$ denotes the values of the coordinate processes~$Z_i$ and~$\gamma_i$ respectively at time~$t$.
\begin{proof}
  We follow the proof of Lemma~\ref{l:STPC}, and explicitly substitute~$\sigma_1 = 1$ and $\sigma_2 = \epsilon$.
  Our conclusion~\eqref{ine:sideest} will follow provided we can show
  \begin{equation}\label{e:hhatbd}
    \int_0^T \hat h(t)^2 \, dt \leq C\,,
  \end{equation}
  for some finite constant $C$, independent of $\epsilon$.
  To bound this, we use the upper bound~\eqref{e:hath1}, and observe that the second term on the right hand side is identically $0$ since~$u = v$.
  For the first term, the only term that may grow faster than $\sqrt{A}$ is when $i = 2$ and $j = 1$.
  In this case, the assumption~\eqref{e:d1v2eq0} guarantees that this term is identically $0$.
  Now squaring and integrating from $0$ to $T = m /A$ proves~\eqref{e:hhatbd} as desired.
\end{proof}
\begin{remark}\label{r:A51}
  If the velocity field~$v$ does not satisfy~\eqref{e:d1v2eq0}, then Lemma~\ref{l:STPC2} still holds provided $A$ is chosen so that $A \geq 1 / \epsilon^2$.
  To see this we note that~\eqref{e:hath1} implies
  \begin{equation*}
    \int_0^T \hat h(t)^2 \, dt \leq \frac{C m }{ A \epsilon^2}\,.
  \end{equation*}
  If $A \geq 1/\epsilon^2$ the right hand side of this is bounded independent of $\epsilon$, and so the remainder of the proof of Lemma~\ref{l:STPC2} remains unchanged.
\end{remark}

Finally, we prove Lemmas~\ref{l:optimalinteriorestbulk}, and Lemma~\ref{l:optimalinteriorestbulk}, which were used in the proofs of Theorem~\ref{t:energy} and Proposition~\ref{p:enstrophy}.
Both proofs follow along the lines of the above tube lemmas.

\begin{proof}[Proof of Lemma~\ref{l:tubecorner}]
  We only consider the case where $z_0 \in Q_0/2$. The other cases are similar.
  First, recall that, by a direct calculation, we can check $T \leq \abs{\ln\delta}/A$.
  Therefore, for small enough $\epsilon$, under the event
  $\set{  \abs{Z_{i, t} -  \gamma_{i,t}}  \leq  {\sigma_{i}}{\paren{\abs{\ln \delta}A}^{-1/2}} \,,~\forall t\leq T \,, i=1,2}$,
  we must have
  $Z_t \in Q_0$ for $t \leq T$. Thus,
  \begin{equation} \label{cornervelocity}
    v_1(Z_t) =  Z_{1, t} \quad \text{ and } \quad
    v_2(Z_t) = - Z_{2,t} \,.
  \end{equation}

  Now define
  \begin{gather*}
    d\tilde Z_t = A \begin{pmatrix} v_1( \gamma_t) \\ v_2(\gamma_t) \end{pmatrix}  \, dt + \sigma \, dB_t
  \end{gather*}
  and write
  \begin{equation} \label{cornervelocity2}
    h(t) \defeq A\begin{pmatrix} v_1( \gamma_t) - v_1(\tilde Z_t) \\
      v_2( \gamma_t) - v_2(\tilde Z_t)\end{pmatrix}
    =
    A\begin{pmatrix}  \gamma_{1,t} - \tilde Z_{1, t} \\
      - \gamma_{2, t} + \tilde Z_{2, t} \end{pmatrix}
    =
    A\begin{pmatrix}  -B_{1, t} \\
      \epsilon B_{2, t}\end{pmatrix}  \,.
  \end{equation}
  As before, we define $\hat h$ and a new measure $\hat \P$ by
  \begin{gather*}
    \hat h(t) \defeq \sigma^{-1} h(t) = \begin{pmatrix}  1 & 0 \\ 0 & 1/\epsilon \end{pmatrix} h(t)
    =  A\begin{pmatrix} -B_{1, t}\\ B_{2, t} \end{pmatrix}
    \,,
    \\
    d\hat \P = M_T \, d\P \,,
  \end{gather*}
  where
  \begin{equation*}
    M_t  \defeq \exp \paren[\Big]{ -\int_0^t \hat h(s) \, dB_s -  \frac{1}{2} \int_0^t \hat h(s)^2 \, ds   }\,,
  \end{equation*}
  for $0 \leq t \leq T$.
  By the Girsanov theorem, the process
  \begin{equation*}
    \hat B_t \defeq \int_0^t \hat h(s) \, ds +   B_t
  \end{equation*}
  is a Brownian motion with respect to the measure $\hat \P$.
  Therefore, by uniqueness of weak solutions of SDEs, we have
  \begin{align*}
    \E ( f(Z_t)) = \hat \E (f(\tilde Z_t))
     & = \hat \E ( f( \gamma_{1, t} + B_{1, t}, \gamma_{2, t} + \epsilon B_{2, t} ))   \\
     & = \E (f( \gamma_{1, t} +  B_{1, t}, \gamma_{2, t} + \epsilon B_{2, t}) M_t) \,.
  \end{align*}
  Hence
  \begin{align*}
    \MoveEqLeft
    \P^x \paren[\Big]{ \abs{Z_{i, t} -  \gamma_{i, t}} \leq \frac{\sigma_i}{\sqrt{\abs{\ln\delta}A } }\,,~\forall t\leq T \,, i = 1 ,2 }
    \\
     & = \E^x \paren[\Big]{ \one_{ \set[\big]{   \abs{ B_t }_\infty \leq  \paren{\abs{\ln\delta}A }^{-1/2}    \,,~\forall t\leq T}}
      M_T  }\,.
  \end{align*}

  Now, we have that, by It\^{o} formula,
  \begin{align*}
    \int_0^t \hat h(s) \, dB_s & = -A\int_0^t B_{1, s} \, dB_{1, s} + A\int_0^t B_{2, s} \, dB_{2, s} \\
                               & = \frac{A}{2}(-B_{1, t}^2 + B_{2, t}^2)  \,.
  \end{align*}
  Therefore,
  \begin{equation*}
    M_t \geq \exp\paren[\Big]{ -\frac{A}{2} ( B_{1, t}^2 + B_{2, t}^2 ) - A^2\int_0^t  (B_{1, s}^2 + B_{2, s}^2) \, ds } \,.
  \end{equation*}
  Therefore, as $T \leq \abs{\ln\delta}/A$, under the event
  \begin{equation*}
    K \defeq \set[\Big]{   \abs{B_t }_\infty
      \leq  \frac{1}{\sqrt{\abs{\ln\delta}A} }
      \,,~\forall t\leq T}  \,,
  \end{equation*}
  we must have
  \begin{equation*}
    M_T \geq  \exp\paren[\Big]{  -\frac{1}{2 \abs{\ln\delta}} - 2   } \geq C \,.
  \end{equation*}
  Since $\P(K) \approx 1/\abs{\ln\delta}^2$, this finishes the proof.
\end{proof}

\begin{proof}[Proof of Lemma~\ref{l:optimalinteriorestbulk}]
  Let $z_0 \in R_{h_0}$ and $T_0 = \inf\set{t > 0 \st \gamma_{2,t} \geq 1 - \delta}$, where~$\gamma$ is the solution to~\eqref{e:gamma} with~$\gamma_0 = z_0$. 
  A direct calculation shows that there exists $C_0$ for which $T_0 \leq C_0/A$.
  Furthermore, when $x_2 \in (0,1-2\delta)$, we have that
  \begin{equation*}
    v(x_1,x_2) = 
    \begin{pmatrix}
      \partial_2 H(x) \\
      \partial_1H
    \end{pmatrix} 
    = \begin{pmatrix}
       H_1(x_1) \\
      \pm x_2 
    \end{pmatrix} \,.
  \end{equation*}
  Therefore, following the proof of the tube lemma (Lemma~\ref{l:STPC}), we find that the function $\hat h(t)$ there satisfies
  \begin{equation*}
    \abs{\hat h(t) }
    = A \begin{pmatrix}
      \abs{H_1(\gamma_{1,t}) - H_1(\gamma_{1,t} + B_{1,t})} \\
       \abs{ B_{2,t}} 
    \end{pmatrix} \,.
  \end{equation*}
  Therefore, under the event
  $\set[\Big]{\sup_{t\leq T_0} \abs{B_t} \leq \sqrt{T_0} \,;
   B_{2,T_0} \geq 0}$, it is true that 
  \begin{equation}
    \label{ine:hhatbound}
    \int_0^{T_0} \abs{\hat h(t)}^2 \, dt \leq C  \,.
  \end{equation}
  We have that 
  \begin{equation*}
    K_1 \defeq \set[\Big]{\sup_{t\leq T_0}\abs{Z_t - \gamma_t} \leq \sqrt{T_0}\,; Z_{2,T_0} \geq 1-2\delta} \subseteq \set[\Big]{ \kappa_1 \leq \frac{C_0}{A}}  \,.
  \end{equation*}
 
  Following the proof of Lemma~\ref{l:STPC},
  by Girsanov's theorem and~\eqref{ine:hhatbound}, there exists $p_1 \in (0,1)$ such that 
  \begin{equation*}
  \P^{z_0}(K_1) \geq C \P \paren[\Big]{ \sup_{t\leq T_0}\abs{B_t} \leq \sqrt{T_0} \,; B_{2,T_0} \geq 0 } \geq p_1 \,,
  \end{equation*}
  from which~\eqref{ine:hittingtopprobabilitybulk} follows immediately.
  \end{proof} 

\bibliographystyle{halpha-abbrv}
\bibliography{refs,preprints}
\end{document}